\crefname{hypothesis}{Hypothesis}{Hypotheses}
\crefname{fact}{Fact}{Facts}
\title{Asymptotic convergence analysis of high-order proximal-point methods beyond sublinear rates \thanks{Submitted to the editors DATE.
\funding{MA and AK were partially supported by the Research Foundation Flanders (FWO) research project G081222N, UA BOF DocPRO4 project with ID 46929, and FWO travel grants K153324N and K152524N. FL was partially supported by ANID--Chile under project Fondecyt Regular 1241040 and by ECOS-ANID project ECOS240031.}}}
\author{
Masoud Ahookhosh\thanks{Department of Mathematics, University of Antwerp, Antwerp, Belgium.
 \\ (\email{masoud.ahookhosh@uantwerp.be}, \email{alireza.kabgani@uantwerp.be}).}
 \and Alfredo Iusem\thanks{Escola de Matemática Aplicada (EMAp), Fundação Getúlio Vargas (FGV), Rio de Janeiro, Brazil.
  (\email{alfredo.iusem@fgv.br}).}
 \and Alireza Kabgani\footnotemark[2]
 \and Felipe Lara\thanks{Instituto de Alta Investigación (IAI), Universidad de Tarapacá, Arica, Chile.
 \\ (\email{flarao@academicos.uta.cl}).} 
}
\newcommand\R{\mathbb{R}}
\newcommand\Rinf{\overline{\mathbb{R}}}
\newcommand\inter[1]{ {\rm \textbf{int}}(#1)} 
\newcommand\dom[1]{ \bs{{\rm dom}}(#1)} 
\newcommand\gf{\varphi} 
\newcommand\gh{\psi} 
\newcommand\fgam[3]{#1_{#3}^{#2}}
\newcommand\prox[3]{ \bs{{\rm prox}}_{#2#1}^{#3}}
\newcommand\ov[1]{\overline{#1}}
\newcommand\mb{\mathbf{B}}
\newcommand\bs[1]{\boldsymbol{#1}}
\newcommand\argmin[1]{\bs{\arg\min}_{#1}}
\newcommand\argmint[1]{\mathop{\bs{\arg\min}}\limits_{#1}}
\newcommand\Nz{\mathbb{N}_0}
\begin{document}

\maketitle
\vspace{-2mm}
\begin{abstract}
This paper investigates the asymptotic convergence behavior of the high-order proximal-point algorithm (HiPPA) to global minimizers, extending existing analyses beyond sublinear convergence rates and complexity analysis. Specifically, we study the proximal operator of a proper lower semicontinuous function augmented with a $p$th-order regularization for $p>1$, and establish the convergence of HiPPA to a global minimizer with a particular focus on its convergence rate. To this end, we focus on minimizing functions in the class of uniformly quasiconvex functions, which includes strongly convex, uniformly convex, and strongly quasiconvex functions as special cases.  Our analysis reveals the following convergence behaviors of HiPPA when the uniform quasiconvexity modulus $\phi$ admits a power function of degree $q$ as a lower bound, i.e., $\phi(t) \geq c t^q$ for some $c>0$, on an interval $\mathcal{I}$: (i) for $q\in (1,2)$ and $\mathcal{I}=[0,1)$, HiPPA exhibits a local linear rate for $p\in [q,2)$;  
(ii) HiPPA converges linearly when $p=2$, $q=2$, and also when $p=q>2$, provided that 
$\mathcal{I}=[0,\infty)$;
(iii) for $q\geq 2$ and $\mathcal{I}=[0,\infty)$, HiPPA achieves a superlinear rate for $p>q$.
Notably, to our knowledge, some of these results are novel, even in the context of strongly or uniformly convex functions, offering new insights into optimizing generalized convex problems.
\end{abstract}

\begin{keywords}
nonconvex optimization, uniformly quasiconvex functions, high-order Moreau envelope, high-order proximal method, linear and superlinear convergence rates 
\end{keywords}

\begin{MSCcodes}
49J52, 65K05, 90C26, 90C56
\end{MSCcodes}

\vspace{-2mm}
\section{Introduction}\label{intro}
In this paper, we consider the minimization problem 
\begin{equation}\label{eq:mainproblem}
\mathop{\bs\min}\limits_{x\in C}\ \gf(x),
\end{equation}
where $\gf: \R^n \to \Rinf := \R \cup \{+\infty\}$ is a proper and lower semicontinuous function, and  $C \subseteq \dom{\gf}$ is a nonempty closed convex set. We aim to minimize this function using the high-order proximal-point algorithm (HiPPA) (see Algorithm~\ref{alg:HiPPA} for more details)
\begin{equation}\label{eq:hippa}\tag{HiPPA}
x^{k+1}\in\prox{\gf}{\gamma_k}{p} (C,x^k):=\argmint{y\in C} \left(\gf(y)+\frac{1}{p\gamma_k}\Vert x^k- y\Vert^p\right),
\end{equation}
for $p>1$ and $\gamma_k\geq \gamma_{\bs\min}>0$, which includes the classical proximal-point algorithm (PPA) with $p=2$ as a special case; see, e.g., \cite{martinet1970breve,martinet1972determination,rockafellar1976monotone}. Due to the remarkable theoretical and computational properties of PPA, such as simplicity and low memory requirements, it has received much attention during the last few decades in both convex and nonconvex settings (e.g., \cite{Bauschke17,Guler92,KecisThibault15,Parikh14,Poliquin96,rockafellar1976monotone,Stella17,Themelis18}). 
Thanks to these desirable features, combined with its close connection to splitting algorithms and its key role in smoothing methods (e.g., Moreau envelope \cite{Moreau65} and forward-backward envelope \cite{Ahookhosh21,themelis2019acceleration,Themelis18}), it has become an unavoidable tool in the field of nonsmooth optimization; cf. \cite{Asi2019Stochastic,Bredies2022Degenerate,iusem2018second,Iusem2022proximal,Iusem20204two-step,Milzarek2024Semismooth}. 

It is worthwhile to note that PPA converges to a global minimizer in convex settings, whereas in nonconvex settings, convergence to a 
stationary
point, either a local minimizer or a saddle point, can only be guaranteed. In recent years, however, there has been growing interest in developing optimization methods that are capable of avoiding saddle points; see, e.g., \cite{Davis2022Escaping,davis2022proximal,lee2019first,vlatakis2019efficiently}, which is closely related to the identification of benign optimization landscapes in many structured problems arising in various application domains; see, e.g., \cite{Josz2023Certifying,ma2022blessing,McRae2024Benign,Vidal_Zhu_Haeffele_2022} and references therein. As an alternative approach, one can model the problem of interest using special classes,
such as convex or strongly quasiconvex functions, that inherently exclude saddle points; cf. \cite{aujol2024fista,grimmer2025some,iusem2018second,Iusem2022proximal,Iusem20204two-step,Kabgani2023,Lara2022strongly}. This motivates further exploration of generalized convexity classes, excluding saddle points, and ensures convergence to local or global minimizers. Section~\ref{sec:ben} presents one such class: uniformly quasiconvex functions.
We recall that a proper function $\gh:\R^n\to\Rinf$ with a convex domain
is said to be uniformly quasiconvex with modulus $\phi$ if, for all
$x,y\in\dom{\gh}$ and all $\lambda\in[0,1]$, it holds that
\[
 \gh(\lambda x+(1-\lambda)y) + \lambda(1-\lambda)\phi(\Vert x-y\Vert) \leq \bs\max\{\gh(x), \gh(y)\}.
\]
The class of uniformly quasiconvex functions includes strongly convex,
uniformly convex, and strongly quasiconvex functions (see Definition~\ref{def:unifquasi}).

A prominent factor in the analysis of optimization schemes for solving problems of the form \eqref{eq:mainproblem} is their convergence rate, particularly for problems involving a large number of variables. For PPA, i.e., scheme \eqref{eq:hippa} with $p=2$, a sublinear rate of convergence can typically be expected \cite{Gu2020Tight,Guler9291Convergence,Luque19849Asymptotic}, unless additional assumptions are imposed on the problem class. For example, it has been shown in \cite{rockafellar1976monotone} that under strong convexity of the objective function, the sequence generated by PPA converges linearly, and even superlinearly as 
$\gamma_k\to \infty$; see \cite{rockafellar1976monotone} for further details. Extending such results to broader classes of functions, such as uniformly convex or quasiconvex functions, poses significant challenges. For strongly quasiconvex functions \cite{GLM-survey,Lara2022strongly,polyak1966existence}, it has been shown in \cite[Theorem 10]{Lara2022strongly} that PPA generates a sequence that converges to a global minimizer. This result has inspired further research into the convergence behavior of proximal-point methods for nonconvex equilibrium problems, including investigations on linear convergence; see, for instance, \cite{Iusem2022proximal, Iusem20204two-step}. However, the class of strongly quasiconvex functions does not cover many important applications, such as those involving uniformly convex problems. This gap motivates the study of broader classes of nonconvex problems that include uniformly convex functions.

Recent theoretical and empirical advances have investigated the fast convergence rates and complexity of high-order proximal-point methods in both convex and nonconvex settings \cite{Ahookhosh23, Ahookhosh24, Kabgani24itsopt,Kabgani25First,Kabgani25itsdeal,Kabganidiff,Kabgani25fundamental,nesterov2021inexact,Nesterov2023a}. In particular, the fundamental properties of the high-order proximal operator and the Moreau envelope have been studied in \cite{Kabganidiff,Kabgani24itsopt}. In \cite{Kabgani24itsopt}, 
an inexact two-level smoothing optimization framework (ItsOPT) was proposed in which high-order proximal subproblems are solved inexactly at the lower level to construct an inexact oracle for the Moreau envelope. Then, a zero-, first-, or second-order method is developed at the upper level using this oracle, where a boosted HiPPA is introduced that converges to a \textit{proximal fixed point} (see Definition~\ref{def:critic}), which may correspond to either a local minimizer or a saddle point. Identifying a specific class of functions for which HiPPA or its variants are guaranteed to converge to a global minimizer with fast convergence rates remains an open and challenging problem, which we explore in the remainder of this paper.

\subsection{{\bf Contribution}}\label{sec:contribution}
Our contribution is twofold:

\begin{enumerate}
    \item[{\bf (i)}] {\bf Fundamental properties of uniformly quasiconvex functions.} We characterize the class of uniformly quasiconvex functions, study its calculus (addition, scalar multiplication, quotient, and composite functions), and explore its coercivity.
    Crucially, we demonstrate that this class inherently excludes saddle points by proving the equivalence between their stationary points (Definition~\ref{def:stationary}) and their unique global minimizers, thus demons\-tra\-ting the absence of spurious local minima or saddle points (Theorem~\ref{th:unifisdir}).

    \item[{\bf (ii)}] 
    {\bf Convergence and complexity analysis of HiPPA.} We provide a simple convergence and complexity analysis to a global minimizer of \eqref{eq:mainproblem} for HiPPA thanks to the desirable properties of the Moreau envelope. Our analysis reveals the following convergence behaviors of HiPPA when the uniform quasiconvexity modulus $\phi$ admits a power function of degree $q$ as a lower bound, i.e., $\phi(t) \geq c t^q$ for some $c>0$, on an interval $\mathcal{I}$: (i) for $q\in (1,2)$  and $\mathcal{I}=[0,1)$, HiPPA exhibits a local linear rate for $p\in [q,2)$;  (ii) HiPPA converges linearly when $p=2$, $q=2$, and also when $p=q>2$, provided that $\mathcal{I}=[0,\infty)$; (iii) for $q\geq 2$ and $\mathcal{I}=[0,\infty)$, HiPPA achieves a superlinear rate for $p>q$. To the best of our knowledge, this is the first comprehensive study of convergence rates of HiPPA. Surprisingly, some of these results are novel even for strongly or uniformly convex functions.
\end{enumerate}

\subsection{{\bf Organization}}\label{sec:Organization}
The remainder of this paper is structured as follows. In Section~\ref{sec:preliminaries}, we provide essential preliminaries and notation. In Section~\ref{sec:ben}, we study the fundamental properties of the class of uniformly quasiconvex functions. In Section~\ref{sec:hoppaexact}, we investigate the asymptotic convergence of HiPPA with a particular focus on the convergence rate. 
In Section~\ref{sec:conclusion}, we deliver our concluding remarks.


\section{Preliminaries and notation} \label{sec:preliminaries}

This section introduces the foundational notation, definitions, and tools necessary for our study of the high-order proximal-point algorithm for uniformly quasiconvex functions.

Throughout this paper, we denote by $\R^n$ the $n$-dimensional \textit{Euclidean space}, equipped with 
the \textit{Euclidean norm} $\Vert\cdot\Vert=\sqrt{\langle\cdot, \cdot\rangle}$, where
$\langle\cdot, \cdot\rangle$ is the standard \textit{inner product}. 
The set of \textit{natural numbers} including zero is denoted by $\Nz := \mathbb{N}\cup\{0\}$.
For $\ov{x}\in \R^n$ and $r>0$, the \textit{open ball} centered at $\ov{x}$ with radius $r$
is denoted by $\mb(\ov{x}; r)$.
Given a set $C\subseteq\R^n$, we denote its \textit{interior}, \textit{closure}, 
and \textit{convex hull} by $\inter{C}$, $\bs{\rm cl}(C)$, and $\bs{\rm conv}(C)$, respectively.
The \textit{indicator function} of $C$ is defined as $\iota_C (x)=0$ if $x \in C$, and 
$\iota_{C} (x) = +\infty$ if $x \not\in C$. 
For a convex set $C\subseteq \mathbb{R}^n$ and $\ov{x}\in C$, the cone of \textit{feasible directions} at
$\ov{x}$ is
    \[
    D_C(\ov{x}) := \left\{ d \in \R^{n}\mid ~ \text{there exists} ~ \delta
     > 0~\text{such that} ~ \ov{x} + \lambda d \in
     C ~ \text{for all}~ \lambda \in \, (0, \delta) \right\}.
    \]
If $C=\R^n$, then $D_C(\ov{x})=\R^n$. 
The function $\sigma_C(x) := \bs\sup_{k \in C} \langle k , x\rangle$ denotes the \textit{support function} of the set
$C$.

For $p > 1$, the gradient of $x\mapsto\frac{1}{p}\Vert x \Vert^p$ is given by
\[
\nabla\left(\frac{1}{p}\Vert x \Vert^p\right) = 
\begin{cases} 
\Vert x \Vert^{p-2} x & \text{if } x \neq 0, \\
0 & \text{if } x = 0.
\end{cases}
\]
For $p \in (1, 2)$, we write
$\nabla\left(\frac{1}{p}\Vert x \Vert^p\right) = \Vert x \Vert^{p-2} x$, for all $x\in\R^n$,
with the convention that $\frac{0}{0} = 0$ when $x = 0$.
Let $p>1$ and $x, y\in \R^n$ be arbitrary.
Since the function $\frac{1}{p}\Vert \cdot\Vert^p$ is convex, we arrive at
\[
  \frac{1}{p}\Vert x\Vert^p -\frac{1}{p}\Vert x-y\Vert^p\geq  \Vert x-y\Vert^{p-2}\langle x-y, x-(x-y)\rangle =\Vert x-y\Vert^{p-2}\langle x-y,y\rangle,
\]
leading to
  \begin{equation}\label{eq:ineqp}
   \Vert x-y\Vert^p\leq \Vert x\Vert^p-p\Vert x-y\Vert^{p-2}\langle x-y, y\rangle.
  \end{equation}
Moreover, it holds that
    \begin{equation}\label{eq:eqp=21}
   \langle x - z, y - x \rangle = \frac{1}{2} \Vert z - y \Vert^{2} - \frac{1}{2} \Vert x - z \Vert ^{2} - \frac{1}{2} \Vert y - x \Vert ^{2}.
  \end{equation}

The \textit{effective domain} of
a function $\gh: \R^n \to \Rinf := \R \cup \{+\infty\}$ is given by
        $\dom{\gh}:= \{x \in \R^n \mid \gh(x) < +\infty\}$,
and $\gh$ is called \textit{proper} if $\dom{\gh}\neq\emptyset$.
The \textit{sublevel set} of $\gh$ at height $\eta \in \R$ is
    $
    \mathcal{L}(\gh, \eta):=\{x \in \R^n \mid \gh(x) \leq \eta\}.
   $
The set of \textit{global minimizers} of $\gh$ over $C\subseteq \mathbb{R}^n$ is denoted by
    $\argmin{x \in C} \gh(x)$.
A function $\gh$ is \textit{lower semicontinuous} (lsc) at $\ov{x} \in \R^n$ if, for any sequence 
$\{x^k\}_{k\in \mathbb{N}} \subseteq \R^n$ with $x^k \to \ov{x}$, it holds that
$\bs\liminf_{k \to +\infty} \gh(x^k) \geq \gh(\ov{x})$.
For a sequence $\{x^k\}_{k\in \Nz}$, a point $\ov{x}\in\R^n$ is a \textit{limit point} 
if $x^k \to \ov{x}$, and $\widehat{x}\in\R^n$ is a \textit{cluster point} if there exists an infinite subset $J \subseteq \Nz$ such that the subsequence $\{x^j\}_{j \in J}$ satisfies
$x^j \to \widehat{x}$. The set of all cluster points of $\{x^k\}_{k \in \Nz}$ is denoted by
 $\Omega(x^k)$.

A proper function $\gh: \R^n \to \Rinf$ is \textit{Fr\'{e}chet differentiable} at 
$\ov{x} \in \inter{\dom{\gh}}$ with \textit{Fr\'{e}chet derivative} 
$\nabla \gh(\ov{x})$ if 
\[
\mathop{\bs\lim}\limits_{x \to \ov{x}} \frac{\gh(x) - \gh(\ov{x}) - \langle \nabla \gh(\ov{x}), x - \ov{x} \rangle}{\Vert x - \ov{x}\Vert} = 0.
\]
The Dini lower directional derivative of 
$\gh: \R^n \to \Rinf$ at $\ov{x} \in \dom{\gh}$ in direction $d \in \R^n$ is
\[
 \gh^{D-} (\ov{x}; d) := \mathop{\bs\liminf}\limits_{t \downarrow 0} \frac{\gh(\ov{x} + td) - \gh(\ov{x})}{t}. 
\]
If $\gh, \vartheta: \R^n \to \R$ are real-valued and $\vartheta$ is Fr\'{e}chet differentiable at $\ov{x}$, then 
\begin{equation}\label{eq:sumruldini}
 (\gh+\vartheta)^{D-} (\ov{x}; d) = \gh^{D-} (\ov{x}; d) + \langle \nabla \vartheta(\ov{x}), d \rangle,
 \qquad \forall d\in \R^n.
\end{equation}


\section{Uniform quasiconvexity: Characterization and properties}\label{sec:ben}

This section explores the properties of uniformly quasiconvex functions, providing characterizations and analytical tools essential for analyzing the convergence of HiPPA in Section~\ref{sec:hoppaexact}. 
We begin with a formal definition of uniform quasiconvexity and related convexity notions, followed by preliminary calculus rules. We then present a characterization linking uniform quasiconvexity to its behavior along line segments, conditions for supercoercivity, and properties of stationary points.

We first recall various notions of convexity and quasiconvexity.
\begin{definition}[Convexity and quasiconvexity]\label{def:unifquasi}
Let $\gh: \R^n\to \Rinf$ be a proper function with a convex domain, and let
$\phi: [0, +\infty) \to [0, +\infty]$ be a modulus, that is, an increasing function that vanishes only at $0$. The function $\gh$ is called:
\begin{enumerate}[label=(\textbf{\alph*}), font=\normalfont\bfseries, leftmargin=0.7cm]
 \item \textit{convex} if, for all $x, y \in \dom{\gh}$ and $\lambda \in [0, 1]$,
 \begin{equation}\label{def:convex}
  \gh(\lambda x + (1-\lambda)y) \leq \lambda \gh(x) + (1 - \lambda) \gh(y);
 \end{equation}
 
  \item \textit{uniformly convex} with modulus $\phi$ if, for all $x, y \in \dom{\gh}$ and $\lambda \in [0, 1]$, 
 \begin{equation}\label{eq:uniformlyconv}
 \gh(\lambda x+(1-\lambda)y) + \lambda(1-\lambda)\phi(\Vert x-y\Vert) \leq \lambda \gh(x) + (1-\lambda) \gh(y);
 \end{equation}

 \item  \textit{quasiconvex} if, for all $x, y \in \dom{\gh}$ and $\lambda\in [0,1]$,
 \begin{equation}\label{def:qcx}
  \gh(\lambda x + (1-\lambda) y) \leq \bs\max \{\gh(x), \gh(y)\};
 \end{equation}

  \item \textit{semistrictly quasiconvex} if, for all $x, y \in \dom{\gh}$ 
 with $\gh(x) \neq \gh(y)$ and $\lambda \in (0, 1)$,
 \begin{equation}
  \gh(\lambda x + (1-\lambda)y) < \bs\max \{\gh(x), \gh(y)\};
 \end{equation}

 \item \textit{uniformly quasiconvex} with modulus $\phi$ if, for all $x, y \in \dom{\gh}$ and $\lambda \in [0, 1]$,
\begin{equation}\label{eq:uniformlyquasi}
 \gh(\lambda x+(1-\lambda)y) + \lambda(1-\lambda)\phi(\Vert x-y\Vert) \leq \bs\max\{\gh(x), \gh(y)\}.
\end{equation}
\end{enumerate}
A function $\gh$ is \textit{strictly convex} (respectively, \textit{strictly quasiconvex}) 
if the inequality in \eqref{def:convex} (resp. \eqref{def:qcx}) is strict for $x\neq y$ and $\lambda\in (0,1)$. If $\phi(t)=\frac{\rho}{2}t^2$ for some $\rho > 0$, a uniformly convex (respectively, uniformly quasiconvex) function is termed \textit{strongly convex} (respectively, \textit{strongly quasiconvex}).
The relationships among these notions are illustrated in Figure~\ref{fig:quasicrel}, where quasiconvex is abbreviated as ``qcx".
\end{definition}

\begin{figure}[h]
    \centering
    \scalebox{0.8}{
    \begin{tikzpicture}[
        node distance=1cm and 0.8cm,
        >=Stealth,
        every node/.style={align=center, font=\small},
        convexbox/.style={rectangle, rounded corners, draw=black, fill=gray!10, thick, minimum width=2.5cm, minimum height=0.8cm},
        qcxbox/.style={rectangle, rounded corners, draw=blue!50, fill=blue!5, thick, minimum width=2.5cm, minimum height=0.8cm},
        arrow/.style={->, thick},
        dashedarrow/.style={->, thick, dashed}
    ]
        \node (scx) [convexbox] {strongly convex};
        \node (ucx) [convexbox, right=of scx] {uniformly convex};
        \node (stcx) [convexbox, right=of ucx] {strictly convex};
        \node (cx) [convexbox, right=of stcx] {\textbf{convex}};

        \node (sqcx) [qcxbox, below=of scx] {strongly qcx};
        \node (uqcx) [qcxbox, below=of ucx] {uniformly qcx};
        \node (stqcx) [qcxbox, below=of stcx] {strictly qcx};
        \node (ssqcx) [qcxbox, below=of cx] {semistrictly qcx};
        \node (qcx) [qcxbox, right=of ssqcx] {\textbf{qcx}};

        \draw[arrow] (scx) -- (ucx);
        \draw[arrow] (ucx) -- (stcx);
        \draw[arrow] (stcx) -- (cx);
        \draw[arrow] (cx) -- (qcx);
        
        \draw[arrow] (sqcx) -- (uqcx);
        \draw[arrow] (uqcx) -- (stqcx);
        \draw[arrow] (stqcx) -- (ssqcx);
        \draw[arrow] (ssqcx) -- node[above, pos=0.4] {lsc} (qcx);

        \draw[arrow] (scx) -- (sqcx);
        \draw[arrow] (ucx) -- (uqcx);
        \draw[arrow] (stcx) -- (stqcx);
        \draw[arrow] (cx) -- (ssqcx);
    \end{tikzpicture}}
    \caption{Relationships among notions introduced in Definition~\ref{def:unifquasi}.}
    \label{fig:quasicrel}
\end{figure}
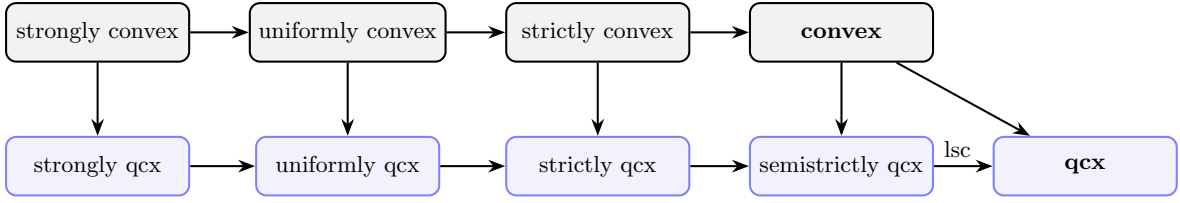


A highly significant illustration of a uniformly quasiconvex function is the function
$x\mapsto\Vert x\Vert^q$ for $q>0$, analyzed in the following example.

\begin{example}\label{ex:norm:unif}
Let $\gh: \mathbb{R}^{n} \rightarrow \mathbb{R}$ be a function given by
$\gh(x)=\Vert x\Vert^{q}$ for $q>0$. We examine its uniform quasiconvexity properties across different values of $q$.
 \begin{itemize}
  \item If $0 < q \leq 1$, then $\gh$ is strongly quasiconvex, and thus uniformly quasiconvex, on the open ball $\mb(0; r)$ for any $r>0$, as established in
\cite[Theorem 2]{jovanovivc1996note}, \cite[Corollary 3.9]{nam2024strong}, and \cite[page 27]{vladimirov1978uniformly}.

 \item  If $q\in (1, 2)$, it is shown in \cite[Lemma~2]{Kabganidiff} that, for any
 $r>0$  and $a, b\in \mb(0; r)$,
\[
\langle \Vert a\Vert^{q-2}a - \Vert b\Vert^{q-2}b, a-b\rangle\geq \kappa(q) r^{q-2}\Vert a - b\Vert^2,
\]
where
\[
\kappa(t):=\left\{
   \begin{array}{ll}
     \frac{(2+\sqrt{3})(t-1)}{16} & t\in (1, \widehat{t}], \\[0.2cm]
      \frac{2+\sqrt{3}}{16}\left(1-\left(3-\sqrt{3}\right)^{1-t}\right) ~~& t\in [\widehat{t},2),
   \end{array}\right.
\]
and $\widehat{t}$ is the solution to $\frac{t(t-1)}{2} = 1 - \left[1 + \frac{(2-\sqrt{3})t}{t-1}\right]^{1-t}$ on $(1, 2]$, and is determined numerically as $\widehat{t} \approx 1.3214$. For simplicity, we write $\kappa(t)$ as $\kappa_t$.
Hence, $\gh(x)=\lVert x\rVert^{q}$ is strongly convex on $\mb(0; r)$ 
with modulus $\phi(t)=\frac{\kappa_q r^{q-2}}{2}t^2$
\cite[Lemma~4.2.1]{Nesterov2018}.

\item If $q=2$, $\gh$ is strongly convex on $\R^n$ with modulus $\phi(t) = t^2$ \cite[Example~10.9]{Bauschke17}. Specifically, for all $x, y \in \R^n$ and $\lambda \in [0, 1]$,
\begin{equation}\label{eq:eqp=22}
\Vert \lambda x + (1-\lambda) y \Vert ^{2} \leq \lambda\Vert x \Vert ^{2} + (1 -\lambda ) \Vert y\Vert ^{2} - \lambda (1 -\lambda ) \Vert x - y \Vert ^{2}.
\end{equation}

\item If $q>2$, $\gh$ is uniformly convex on $\R^n$ \cite[Example~10.16]{Bauschke17}.
Assume that $\widehat\phi$ is the modulus of this function. Since $\frac{q}{2}\geq 1$ and $\Vert\cdot\Vert^q=\left(\Vert\cdot\Vert^2\right)^{\frac{q}{2}}$, based on \cite[Proposition~10.15, Example 10.16]{Bauschke17},
\[
\widehat\phi(\cdot)\geq 2^{1-q} \bs\min\left\{\frac{q}{2}2^{1-\frac{q}{2}}, 1-2^{-\frac{q}{2}}\right\}\vert \cdot\vert^q.
\]
Note that
$\bs\min\left\{\frac{q}{2}2^{1-\frac{q}{2}}, 1-2^{-\frac{q}{2}}\right\}\geq 2^{-\frac{q}{2}}$.
Thus,
\[
2^{1-q} \bs\min\left\{\frac{q}{2}2^{1-\frac{q}{2}}, 1-2^{-\frac{q}{2}}\right\}\vert \cdot\vert^q\geq 2^{1-q}2^{-\frac{q}{2}}\vert \cdot\vert^q=2^{-\frac{3q-2}{2}}\vert \cdot\vert^q.
\]
Hence, for each $x, y\in \R^n$ and $\lambda\in (0,1)$, by setting $\widehat{\sigma}_q := \left(\frac{1}{2}\right)^\frac{3q-2}{2}$, we have
\begin{equation}\label{eq:unifcon:normp}
\Vert \lambda x+(1-\lambda) y\Vert^q\leq \lambda \Vert x\Vert^q+(1-\lambda)\Vert y\Vert^q - \lambda(1-\lambda)\widehat{\sigma}_q\Vert x-y\Vert^q.
\end{equation}
   \end{itemize}
From the relationships in Figure~\ref{fig:quasicrel}, $\gh(x) = \Vert x \Vert^q$ is uniformly quasiconvex on $\mb(0; r)$ for $q \in (0, 2)$ and on $\R^n$ for $q \geq 2$.
\end{example}

\begin{remark}
In Example~\ref{ex:norm:unif}, we established the uniform quasiconvexity of the function $\gh(x) = \Vert x \Vert^q$ for all $q>0$, either locally on bounded balls or globally on $\R^n$, depending on the value of $q$. 
 This result does not extend to the $q$-norm functions $\gh_1(x) = \Vert x \Vert_q^q = \sum_{i=1}^n \vert x_i \vert^q$ or $\gh_2(x) = \Vert x \Vert_q = \left( \sum_{i=1}^n \vert x_i \vert^q \right)^{1/q}$ for $q\in (0,1)$ in $\R^n$ with $n \geq 2$, as neither is quasiconvex.
For instance, consider $x = (1, 0), y = (0, 1)$ in $\R^{2}$ and $z = \frac{x+y}{2} = (\frac{1}{2}, \frac{1}{2})$. 
Then $\bs\max\{\gh_i(x), \gh_i(y)\} =1$, for $i=1, 2$, but 
\[
\gh_1 (z) = 2 \frac{1}{2^{q}} = 2^{1-q} > 1,~~\gh_2(z) = (2^{1-q})^{\frac{1}{q}} = 2^{\frac{1}{q} - 1} > 1, \qquad \forall ~ q \in \, (0, 1).
\]
These inequalities imply that
$\gh_i(z) > \bs\max\{\gh_i(x), \gh_i(y)\}$, and hence $\gh_i$ is not quasiconvex for $i=1, 2$.
\end{remark}

In general, the sum of uniformly quasiconvex functions is not necessarily quasiconvex (see \cite{flores2021characterizing,zaffaroni2004every}). In \cite[Example~6.5]{nam2024strong}, it is shown that while the functions 
$x\mapsto \gf(x-1)$ and $x\mapsto \gf(x+1)$ are uniformly quasiconvex, where
\begin{equation}\label{eq:funphinotu}
  \gf(x):=\begin{cases} 
x^2 & \text{if } x \neq 0, \\
-1 & \text{if } x = 0,
\end{cases}  
\end{equation}
the function $\gh(x)= \gf(x-1)+\gf(x+1)$ fails to be quasiconvex.

Next, we present results on the calculus of uniform quasiconvexity under addition and scalar multiplication.

\begin{proposition}[Addition and scalar multiplication]\label{pro:cal:sc}
Let $\gh:\R^n\to \Rinf$ be a proper and uniformly quasiconvex function with modulus $\phi$. Let
$C\subseteq\R^n$ be a nonempty convex set, $a\in \R$, and $b>0$. Then, the following statements hold:
\begin{enumerate}[label=(\textbf{\alph*}), font=\normalfont\bfseries, leftmargin=0.7cm]
\item\label{pro:cal:sc:a} $\gh+a$ is a uniformly quasiconvex function with modulus $\phi$;
\item\label{pro:cal:sc:b} $b\gh$ is a uniformly quasiconvex function with modulus $b\phi$;
\item\label{pro:cal:sc:c}  $\gh+\iota_C$ is a uniformly quasiconvex function with modulus 
$\phi$.
\end{enumerate}
\end{proposition}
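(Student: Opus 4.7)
The plan is to verify each of the three items by a direct substitution into the defining inequality \eqref{eq:uniformlyquasi}, exploiting that each operation affects both sides of that inequality in a controlled way. In every case I must also check two background items before the calculation: that the transformed function still has a convex effective domain, and that the prescribed candidate modulus is actually a modulus in the sense of Definition~\ref{def:unifquasi}, i.e., increasing with $\phi(0)=0$ and strictly positive elsewhere.

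For \ref{pro:cal:sc:a}, the domain is unchanged and the modulus is unchanged, so only the inequality is at stake. I would fix $x,y\in\dom{\gh}$ and $\lambda\in[0,1]$, add and subtract the constant $a$ on the right-hand side of \eqref{eq:uniformlyquasi}, and use the trivial identity $\max\{\gh(x),\gh(y)\}+a=\max\{\gh(x)+a,\gh(y)+a\}$. For \ref{pro:cal:sc:b}, the candidate modulus $b\phi$ is plainly a modulus since $b>0$. I would then multiply \eqref{eq:uniformlyquasi} through by $b$; because $b>0$, the inequality is preserved, the left-hand side becomes $(b\gh)(\lambda x+(1-\lambda)y)+\lambda(1-\lambda)(b\phi)(\|x-y\|)$, and the right-hand side $b\max\{\gh(x),\gh(y)\}$ equals $\max\{(b\gh)(x),(b\gh)(y)\}$ by positive homogeneity of the maximum.

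For \ref{pro:cal:sc:c} (reading the $\gf$ in the statement as $\gh$), the effective domain of $\gh+\iota_C$ is $\dom{\gh}\cap C$, which is convex as the intersection of two convex sets. For $x,y\in\dom{\gh}\cap C$ and $\lambda\in[0,1]$, convexity of $C$ gives $\iota_C(\lambda x+(1-\lambda)y)=0$, so the left-hand side of \eqref{eq:uniformlyquasi} for $\gh+\iota_C$ coincides with that for $\gh$, while on the right-hand side $\iota_C(x)=\iota_C(y)=0$, and \eqref{eq:uniformlyquasi} applied to $\gh$ closes the argument.

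There is no real obstacle here: each item is a one-line consequence of \eqref{eq:uniformlyquasi} once the trivial bookkeeping on domains and moduli is done. The only minor care point is \ref{pro:cal:sc:b}, where one should note that $b>0$ is essential both to preserve the direction of the inequality and to keep $b\phi$ an admissible modulus; for $b=0$ the claim collapses and for $b<0$ it fails, since the maximum is not subadditive with a negative multiplier.
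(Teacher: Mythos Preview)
Your proposal is correct: each item is indeed a direct substitution into \eqref{eq:uniformlyquasi}, and you have handled the side conditions (convexity of the domain, admissibility of the modulus, the typo $\gf\to\gh$) carefully. The paper itself states Proposition~\ref{pro:cal:sc} without proof, so there is nothing to compare against beyond noting that your argument is exactly the routine verification the authors evidently considered immediate.
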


The following proposition, which generalizes \cite[Proposition~4.1]{Iusem20204two-step}, introduces a\-ddi\-tio\-nal members of the class of uniformly quasiconvex functions, which is particularly important in economics \cite{cambini2008generalized,iusem2018second,Iusem20204two-step}.
The proof follows directly from the reasoning in the cited proposition and is omitted for brevity.

\begin{proposition}[Uniform quasiconvexity of quotient functions]\label{prop:quotient}
Let $h, g: \R^n \to \R$ and define $\gh(x):=\frac{h(x)}{g(x)}$, where $g(x)\neq 0$ for each $x\in \R^n$. Suppose $h$ is uniformly convex with modulus $\phi$, and $g$ is finite, positive, and bounded above by $M$. If any of the following conditions hold:
\begin{enumerate}[label=(\textbf{\alph*}), font=\normalfont\bfseries, leftmargin=0.7cm]
\item $g$ is affine,
\item $h$ is nonnegative and $g$ is concave,
\item $h$ is nonpositive and $g$ is convex,
\end{enumerate}
then $\gh$ is uniformly quasiconvex with modulus $\frac{\phi}{M}$.
\end{proposition}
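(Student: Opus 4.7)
The plan is to fix arbitrary $x,y\in\R^n$ and $\lambda\in[0,1]$, set $z:=\lambda x+(1-\lambda)y$, and assume without loss of generality that $\gh(x)\geq\gh(y)$, so that $\bs\max\{\gh(x),\gh(y)\}=\gh(x)$. Since $g(z)>0$, the target inequality
\begin{equation*}
\gh(z)+\frac{\lambda(1-\lambda)}{M}\phi(\Vert x-y\Vert)\leq\gh(x)
\end{equation*}
is equivalent to $h(z)+\tfrac{\lambda(1-\lambda)\,g(z)}{M}\phi(\Vert x-y\Vert)\leq\gh(x)\,g(z)$, and since $g(z)/M\leq 1$ this will follow from the stronger inequality
\begin{equation*}
h(z)+\lambda(1-\lambda)\phi(\Vert x-y\Vert)\leq\gh(x)\,g(z).
\end{equation*}

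Next, I would apply the uniform convexity of $h$ to bound the left-hand side above by $\lambda h(x)+(1-\lambda)h(y)=\lambda\gh(x)g(x)+(1-\lambda)\gh(y)g(y)$. Since $\gh(x)\geq\gh(y)$ and $g(y)>0$, the second summand satisfies $(1-\lambda)\gh(y)g(y)\leq(1-\lambda)\gh(x)g(y)$, so the whole expression is at most $\gh(x)\bigl[\lambda g(x)+(1-\lambda)g(y)\bigr]$. Hence the proposition reduces to establishing
\begin{equation*}
\gh(x)\bigl[\lambda g(x)+(1-\lambda)g(y)\bigr]\leq\gh(x)\,g(z),
\end{equation*}
which is simply a Jensen-type comparison between $g(z)$ and $\lambda g(x)+(1-\lambda)g(y)$, modulated by the sign of $\gh(x)$.

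The three structural assumptions dispatch this comparison cleanly. In case (a), $g$ is affine, so both sides of the displayed inequality coincide. In case (b), nonnegativity of $h$ together with positivity of $g$ forces $\gh(x)\geq 0$, and concavity of $g$ yields $g(z)\geq\lambda g(x)+(1-\lambda)g(y)$; multiplying by $\gh(x)\geq 0$ preserves the direction. In case (c), nonpositivity of $h$ forces $\gh(x)\leq 0$, while convexity of $g$ gives $g(z)\leq\lambda g(x)+(1-\lambda)g(y)$; multiplying by the nonpositive $\gh(x)$ reverses the inequality into the desired form.

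The main obstacle I foresee is not technical depth but careful sign-bookkeeping: the three hypotheses are synchronized precisely so that the direction of the Jensen-type inequality on $g$, once multiplied by $\gh(x)$, consistently yields the same conclusion. A secondary subtlety worth flagging is that the factor $g(z)/M\leq 1$ used at the very beginning is what downgrades the modulus from $\phi$ to $\phi/M$, explaining the exact form appearing in the statement.
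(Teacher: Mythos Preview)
Your proposal is correct and follows precisely the natural route: uniform convexity of $h$ bounds $h(z)+\lambda(1-\lambda)\phi(\Vert x-y\Vert)$ by $\lambda h(x)+(1-\lambda)h(y)$, the assumption $\gh(x)\ge\gh(y)$ pushes this up to $\gh(x)[\lambda g(x)+(1-\lambda)g(y)]$, and the three structural hypotheses on $(h,g)$ synchronize the sign of $\gh(x)$ with the Jensen direction for $g$ to yield $\gh(x)g(z)$; the factor $g(z)/M\le 1$ then accounts for the weakened modulus $\phi/M$. The paper omits its own proof, stating only that it follows directly from the reasoning in \cite[Proposition~4.1]{Iusem20204two-step} (the strongly quasiconvex case $\phi(t)=\tfrac{\rho}{2}t^2$); your argument is exactly that reasoning carried out for a general modulus, so there is nothing to add.
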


We introduce key definitions and concepts used in the subsequent analysis to provide a sufficient condition for uniform quasiconvexity of composite functions.
We begin with the notion of a convex-pair, which facilitates the analysis of composite functions.

\begin{definition}[Convex-pair]\label{def:semipair}
  Let $\gf: \R^n\to\R^m$ be a mapping and $\gh:\R^m\to \Rinf$ be a proper function. 
  Let $C\subseteq \R^n$ be a convex set such that $\gf(C)\subseteq \dom{\gh}$.
  The pair $(\gf, \gh)$ is called a \textit{convex-pair} with respect to $C$ if, for all $x,y\in C$ and 
  $\lambda\in [0,1]$, 
  \[
  \gh(\gf(\lambda x+(1-\lambda)y)) \leq \gh(\lambda \gf(x)+(1-\lambda)\gf(y)).
  \]
\end{definition}

To clarify the convex-pair notion, we provide an example showing how common
function classes satisfy this property. This example highlights the interplay
between the convexity (or concavity) of $\gf$ and the monotonicity of $\gh$.
\begin{example}
If $\gf: \R^n\to\R$ is a convex (respectively, concave) function on a convex set $C$, and $\gh:\R\to \Rinf$ is nondecreasing (respectively, nonincreasing) on $\gf(C)$, then 
$(\gf, \gh)$ is a convex-pair with respect to $C$. 
\end{example}

Next, we recall affine mappings on convex sets, which are instrumental in cons\-truc\-ting composite functions that preserve quasiconvexity.
\begin{definition}[Affine mapping]\label{def:simline}
An operator $T:\R^n\to\R^m$ is called an \textit{affine mapping} on a convex set $C\subseteq \R^n$ if, for all $\lambda\in [0,1]$ and $x, y\in C$, 
\[
T(\lambda x+(1-\lambda) y)=\lambda T(x)+(1-\lambda) T(y).
\]
\end{definition}

\begin{remark}\label{rem:homconp}
\begin{enumerate}[label=(\textbf{\alph*}), font=\normalfont\bfseries, leftmargin=0.7cm]
  \item\label{rem:homconp:a}  Linear operators are affine mappings on $\R^n$, but not vice versa. For example, for each matrix $A\in \R^{m\times n}$ and non-zero vector $b\in\R^m$, the operator $S(x)=Ax-b$ is an affine mapping but is not a linear operator.
  \item \label{rem:homconp:b} If $T:\R^n\to\R^m$ is an affine mapping on a convex set $C\subseteq \R^n$, and $\gf: \R^m\to\Rinf$ is a proper function such that $T(C)\subseteq \dom{\gf}$, then $(T, \gf)$ is a convex-pair with respect to $C$.
  \end{enumerate}
\end{remark}

In the following, we introduce the notion of uniform regularity, which ensures that operators preserve distances appropriately for uniform quasiconvexity.

\begin{definition}[Uniform regularity]\label{def:unifreg}
An operator $T:\R^n\to\R^m$ is said to be \textit{uniformly regular} on a convex set $C\subseteq \R^n$ with modulus $\phi$ if
\[
\phi(\Vert x - y\Vert)\leq \Vert T(x) - T(y)\Vert, \qquad \forall x, y\in C.
\]
\end{definition}


\begin{remark}\label{rem:unifnorm}
\begin{enumerate}[label=(\textbf{\alph*}), font=\normalfont\bfseries, leftmargin=0.7cm]
\item\label{rem:unifnorm:a} We recall that an operator $T:\R^n\to\R^n$ is called \textit{uniformly monotone} on a convex set $C$ if there exists a modulus $\phi$ such that, for all $x, y\in C$, 
\[
\langle x - y, T(x) - T(y)\rangle\geq \phi(\Vert x - y\Vert).
\]
By the Cauchy–Schwarz inequality, uniform monotonicity implies
\begin{equation}\label{eq:inLip}
\ov{\phi}(\Vert x - y\Vert):=\frac{\phi(\Vert x - y\Vert)}{\Vert x - y\Vert}\leq \Vert T(x) - T(y)\Vert,\qquad \forall x,y\in C, x\neq y.
\end{equation}
For $\phi(t)=\rho t^2$, with $\rho>0$, the inequality \eqref{eq:inLip} is known as the \textit{reverse Lipschitz} or \textit{inverse Lipschitz} condition, and it implies uniform regularity. Additionally, for single-valued and bijective operators $T$, this condition is referred to as \textit{metric regularity}. 
In particular, the class of uniformly monotone operators whose modulus $\vartheta$ can be written as $\vartheta(t)=t\phi(t)$, where $\phi$ is a modulus, forms a subclass of uniformly regular operators.

\item\label{rem:unifnorm:b} For the operator $S(x)=Ax-b$, if all singular values are positive, or equivalently when $A$ is full column rank, then
\[
\Vert S(x)-S(y)\Vert=\Vert A(x-y)\Vert\geq \sigma_{\min}\Vert x-y\Vert,
\]
where $\sigma_{\min}>0$ denotes the smallest singular value of $A$
\cite[Appendix~A.5]{boyd2004convex}. In this case, $S$ is uniformly regular with modulus
$\phi(t)=\sigma_{\min}t$.
 
\end{enumerate}
\end{remark}

Here, we present a sufficient condition for the uniform quasiconvexity of composite functions. 
The simple proof is omitted.

\begin{theorem}[Uniform quasiconvexity of composite functions]\label{th:com:unif}
Let $\gf: \R^n\to\R^m$ and $\gh:\R^m\to \Rinf$ 
be a convex-pair with respect to a convex set $C\subseteq \R^n$.
If $\gh$ is uniformly quasiconvex with modulus $\phi$ and $\gf$ is uniformly regular with modulus $\vartheta$, then the composite function $x\mapsto \gh(\gf(x))$ is uniformly quasiconvex on $C$ with modulus $\phi\circ \vartheta$.
\end{theorem}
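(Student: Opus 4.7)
The plan is to unfold the three hypotheses in sequence, so that the conclusion drops out as a short chain of inequalities. Fix arbitrary $x, y \in C$ and $\lambda \in [0,1]$, set $u := \gf(x)$, $v := \gf(y)$, and note that $u, v \in \dom{\gh}$ by assumption. We must bound $\gh(\gf(\lambda x + (1-\lambda) y))$ from above by $\max\{\gh(u), \gh(v)\} - \lambda(1-\lambda)\phi(\vartheta(\|x-y\|))$, and we must verify that $\phi \circ \vartheta$ qualifies as a modulus in the sense of Definition~\ref{def:unifquasi}.

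First I would use the convex-pair assumption on $(\gf, \gh)$ with respect to $C$ (Definition~\ref{def:semipair}) to pull the outer convex combination outside the inner map:
\begin{equation*}
\gh\bigl(\gf(\lambda x + (1-\lambda) y)\bigr) \leq \gh\bigl(\lambda \gf(x) + (1-\lambda) \gf(y)\bigr) = \gh(\lambda u + (1-\lambda) v).
\end{equation*}
Second, since $u, v \in \dom{\gh}$ and $\gh$ is uniformly quasiconvex with modulus $\phi$, inequality \eqref{eq:uniformlyquasi} yields
\begin{equation*}
\gh(\lambda u + (1-\lambda) v) + \lambda(1-\lambda)\phi(\|u - v\|) \leq \max\{\gh(u), \gh(v)\}.
\end{equation*}
Third, I would invoke uniform regularity of $\gf$ on $C$ (Definition~\ref{def:unifreg}), which gives $\vartheta(\|x-y\|) \leq \|\gf(x) - \gf(y)\| = \|u - v\|$; then monotonicity of $\phi$ (it is a modulus, hence increasing) yields $\phi(\vartheta(\|x-y\|)) \leq \phi(\|u-v\|)$. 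Chaining the three displays produces exactly the uniformly quasiconvex inequality for $\gh \circ \gf$ with modulus $\phi \circ \vartheta$.

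To close the argument I would verify that $\phi \circ \vartheta \colon [0, +\infty) \to [0, +\infty]$ is indeed a modulus: it is the composition of two increasing functions vanishing only at $0$, and hence increasing and zero only at $0$. I do not anticipate a real obstacle here; the only subtle point is to make sure that $\phi$ is applied to a nonnegative argument (true since $\vartheta$ takes values in $[0,+\infty]$) and that the monotonicity step does not require strict inequality. Everything follows from the definitions in a three-line chain, so this is essentially a bookkeeping proof rather than a genuinely hard one.
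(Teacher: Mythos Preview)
Your proposal is correct and follows essentially the same three-step chain as the paper: convex-pair property, uniform quasiconvexity of $\gh$, then uniform regularity of $\gf$ combined with monotonicity of $\phi$. Your explicit verification that $\phi \circ \vartheta$ is a modulus is a minor addition the paper omits, but otherwise the arguments are identical.
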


\begin{corollary}[Uniform quasiconvexity of norm–affine compositions]\label{cor:com:unif}
Let $T:\R^n\to\R^m$ be an affine mapping that is uniformly regular.
Then, the function
$ \gh(x)=\lVert T(x)\rVert^{q}$
is uniformly quasiconvex on the ball $\mb(0; r)$ for any $r>0$
and all $q > 0$.
\end{corollary}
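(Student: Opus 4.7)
The plan is to invoke the composite-function result of Theorem~\ref{th:com:unif} with outer function $\gh_0(y) := \Vert y\Vert^q$ and inner function $T$, after first reducing to a bounded image on which the outer norm power is known to be uniformly quasiconvex.

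First, I would observe that any homogeneous additive mapping $T:\R^n\to\R^m$ is affine, i.e., $T(x)=Ax+b$ for a linear map $A$ and $b=T(0)$. Indeed, applying the defining identity with $\lambda=\tfrac12$ to pairs $(2x,0)$ yields additivity of $T-T(0)$, and homogeneity for nonnegative scalars follows from iterating the convex-combination identity; together these force $T-T(0)$ to be linear on $\R^n$. Consequently, $T$ is (Lipschitz) continuous and maps the bounded set $\mb(0;r)$ into some ball $\mb(0;R)$ for a finite $R>0$ depending on $r$, $\Vert A\Vert$, and $\Vert b\Vert$.

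Second, by Example~\ref{ex:norm:unif}, the outer function $\gh_0(y)=\Vert y\Vert^q$ is uniformly quasiconvex on the ball $\mb(0;R)$ for every $q>0$: for $q\in(0,1]$ it is strongly quasiconvex on $\mb(0;R)$, for $q\in(1,2)$ it is strongly convex on $\mb(0;R)$, and for $q\geq 2$ it is uniformly convex on the whole space $\R^m$. Denote the corresponding modulus by $\phi$. On the other hand, since $T$ is a homogeneous additive mapping on the convex set $C=\mb(0;r)$ and its image lies in $\dom{\gh_0}=\R^m$, Remark~\ref{rem:homconp}\ref{rem:homconp:b} tells us that the pair $(T,\gh_0)$ is a convex-pair with respect to $\mb(0;r)$. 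Finally, the hypothesis that $T$ is uniformly regular on $\mb(0;r)$ with some modulus $\vartheta$ is given.

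With these three ingredients in hand, Theorem~\ref{th:com:unif} applies directly to $\gh=\gh_0\circ T$ on $C=\mb(0;r)$, yielding that $\gh(x)=\Vert T(x)\Vert^q$ is uniformly quasiconvex on $\mb(0;r)$ with modulus $\phi\circ\vartheta$. The only nonroutine step is the reduction to a bounded image of $T$ needed when $q<2$, since Example~\ref{ex:norm:unif} only guarantees uniform quasiconvexity of $\Vert\cdot\Vert^q$ on bounded balls in that regime; this is precisely where continuity of the affine map $T$ is used. For $q\geq 2$ this step is unnecessary and the argument simplifies accordingly.
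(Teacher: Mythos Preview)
Your proposal is correct and follows essentially the same route as the paper, which simply cites Remark~\ref{rem:homconp}\ref{rem:homconp:b}, Theorem~\ref{th:com:unif}, and Example~\ref{ex:norm:unif}. Your additional step---showing that the homogeneous additive map $T$ is affine, hence continuous, so that $T(\mb(0;r))$ lies in some ball $\mb(0;R)$ on which $\Vert\cdot\Vert^q$ is uniformly quasiconvex for $q<2$---fills in a detail the paper leaves implicit but is indeed needed for the regime $q\in(0,2)$.
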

\begin{proof}
It follows from Remark~\ref{rem:homconp}~\ref{rem:homconp:b}, Theorem~\ref{th:com:unif}, and Example~\ref{ex:norm:unif}.
\end{proof}

The next examples illustrate the potential for developing novel models capable of addressing practical, real-world problems.
\begin{example}\label{cor:com:unif:matrix}
Let $A\in \R^{m\times n}$ be full column rank and $b\in \R^{m}$. It follows from Remarks~\ref{rem:homconp}~\ref{rem:homconp:a}~and~\ref{rem:unifnorm}~\ref{rem:unifnorm:b}, Theorem~\ref{th:com:unif}, and Example~\ref{ex:norm:unif}
that the function $\gh(x)=\lVert Ax -b\rVert^{q}$
is uniformly quasiconvex on $\mb(0; r)$ with $r>0$
for all $q > 0$.
\end{example}

\begin{example}
Consider the operator $\gf(x)=A^\top(Ax-b)+\alpha x$, where $A\in\R^{m\times n}$, $b\in\R^m$, and $\alpha>0$. This operator coincides with the gradient of the Tikhonov-regularized least-squares functional
\[
f(x)=\frac{1}{2}\Vert Ax-b\Vert^2+\frac{\alpha}{2}\Vert x\Vert^2. 
\]
Since $A^\top A$ is symmetric positive semidefinite, it holds for all $v\in\R^n$ that
\[
\langle (A^\top A+\alpha I)v,v\rangle=\Vert Av\Vert^2+\alpha\Vert v\Vert ^2\geq \alpha\Vert v\Vert^2.
\]
Applying this inequality with $v=x-y$ and invoking the Cauchy--Schwarz inequality yields
\[
\alpha\Vert x-y\Vert^2 \leq \langle \gf(x)-\gf(y),x-y\rangle
\leq \Vert \gf(x)-\gf(y)\Vert \Vert x-y\Vert.
\]
For $x\neq y$, dividing by $\|x-y\|$ gives
$\Vert \gf(x)-\gf(y)\Vert \geq \alpha\Vert x-y\Vert$.
Hence, $\gf$ is uniformly regular on $\R^n$ (and therefore on any convex set $C\subseteq\R^n$) with linear modulus $\vartheta(t)=\alpha t$.
Let $\gh(u)=\|u\|^q$ with $q>0$. By Example~\ref{ex:norm:unif}, the function $\gh$ is uniformly quasiconvex on bounded sets. Consequently, by Theorem~\ref{th:com:unif}, the composite function
$\gh(\gf(x))=\lVert A^\top(Ax-b)+\alpha x\rVert^{q}$ is uniformly quasiconvex on the ball $\mb(0;r)$ for any $r>0$ and any $q>0$. This construction yields a residual-based reformulation of the problem $\bs\min_{x\in\R^n} f(x)$, namely, $\bs\min_{x\in\R^n} \gh(\gf(x))=\bs\min_{x\in\R^n}\Vert \nabla f(x)\Vert^q$, 
commonly referred to as \textit{gradient norm minimization}.
Despite its potential nonsmoothness, this objective enjoys favorable geometric properties such as the absence of spurious stationary points owing to its uniform quasiconvexity. These properties will be exploited in the subsequent analysis.
\end{example}

\subsection{Characterization and \texorpdfstring{$m$}{m}-supercoercivity}

We begin with the following theorem that generalizes \cite[Theorem~1]{jovanovivc1996note} by extending the line-segment characterization of strong quasiconvexity to uniform quasiconvexity with a modulus. The proof follows the same argument as \cite[Theorem~1]{jovanovivc1996note}.

\begin{theorem}[Line-segment characterization]
\label{thm:line-segment-uniform-quasiconvex}
Let $C \subseteq \R^n$ be a nonempty convex set. A proper function $\gf:\R^n\to\Rinf$ is uniformly quasiconvex on $C$ with modulus $\phi$ if and only if, for any distinct $x, y \in C$, the function
\[
\gh(t) = \gf\left(y + \frac{t}{\|x-y\|}(x-y)\right), \quad t \in \mathbb{R},
\]
is uniformly quasiconvex on $[0, \|x-y\|]$ with the same modulus $\phi$.
\end{theorem}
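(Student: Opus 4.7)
The plan is to exploit the fact that the map $t\mapsto y+\frac{t}{\|x-y\|}(x-y)$ is an isometric affine parameterization of the closed segment $[y,x]$, so it transports both convex combinations and Euclidean distances between the interval $[0,\|x-y\|]$ and that segment. Once this is noted, uniform quasiconvexity along the segment and uniform quasiconvexity of $\psi$ on $[0,\|x-y\|]$ become the same inequality written in two coordinate systems.

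For the forward direction, I would fix distinct $x,y\in C$, pick arbitrary $t_1,t_2\in[0,\|x-y\|]$ and $\lambda\in[0,1]$, and set $z_i := y+\frac{t_i}{\|x-y\|}(x-y)$ for $i=1,2$. Each $z_i$ is a convex combination of $x$ and $y$, hence lies in $C$ by convexity of $C$. A direct computation gives
\begin{equation*}
\lambda z_1+(1-\lambda)z_2 \;=\; y+\frac{\lambda t_1+(1-\lambda)t_2}{\|x-y\|}(x-y),\qquad \|z_1-z_2\|=|t_1-t_2|.
\end{equation*}
Applying the uniform quasiconvexity of $\varphi$ on $C$ to $z_1,z_2$ with parameter $\lambda$ then rewrites exactly as
\begin{equation*}
\psi(\lambda t_1+(1-\lambda)t_2)+\lambda(1-\lambda)\phi(|t_1-t_2|)\;\le\;\max\{\psi(t_1),\psi(t_2)\},
\end{equation*}
which is the desired uniform quasiconvexity of $\psi$ on $[0,\|x-y\|]$ with modulus $\phi$.

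For the converse, I would fix arbitrary $x,y\in C$; the case $x=y$ is trivial, so assume $x\ne y$. Take any $\lambda\in[0,1]$ and specialize the hypothesis on $\psi$ to the endpoints $t_1=\|x-y\|$ and $t_2=0$, which satisfy $\psi(t_1)=\varphi(x)$ and $\psi(t_2)=\varphi(y)$. Since $\lambda t_1+(1-\lambda)t_2=\lambda\|x-y\|$ corresponds to the point $\lambda x+(1-\lambda)y$ under the parameterization, the uniform quasiconvexity of $\psi$ at $t_1,t_2$ yields
\begin{equation*}
\varphi(\lambda x+(1-\lambda)y)+\lambda(1-\lambda)\phi(\|x-y\|)\;\le\;\max\{\varphi(x),\varphi(y)\},
\end{equation*}
which is uniform quasiconvexity of $\varphi$ on $C$ with modulus $\phi$.

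There is no real obstacle; the only delicate point is bookkeeping of the effective domain, namely verifying that $z_1,z_2$ in the forward direction lie in $\dom{\varphi}\cap C$ whenever $\psi(t_1),\psi(t_2)<+\infty$, which follows from convexity of $C$ together with the convention that the inequality in \eqref{eq:uniformlyquasi} is automatic when either $\varphi(z_1)$ or $\varphi(z_2)$ is $+\infty$. Apart from this, the proof reduces to the straightforward observation that the chosen parameterization is an affine isometry between $[0,\|x-y\|]$ and $[y,x]$.
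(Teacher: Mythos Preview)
Your proof is correct and follows essentially the same approach as the paper: both directions exploit that $t\mapsto y+\tfrac{t}{\|x-y\|}(x-y)$ is an affine isometry between $[0,\|x-y\|]$ and the segment $[y,x]$, pushing the uniform quasiconvexity inequality back and forth via the same substitutions (your $z_1,z_2,t_1,t_2$ are the paper's $v,u,s,t$). Your added remark on domain bookkeeping is a small refinement not present in the paper's proof, but the argument is otherwise identical.
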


For continuously differentiable functions, we present a differential characterization of uniform quasiconvexity. 
\begin{theorem}[Differential characterization]\label{th:chardiff}
Let $\gf:\R^n\to\R$ be continuously differentiable. Then,
\begin{enumerate}[label=(\textbf{\alph*}), font=\normalfont\bfseries, leftmargin=0.7cm]
\item \label{th:chardiff:a} if $\gf$ is uniformly quasiconvex with modulus $\phi$, then
\begin{equation}\label{char:uqcx}
 \gf(x) \leq \gf(y) \, \Longrightarrow \, 
\langle \nabla \gf (y), x - y \rangle \leq -\phi(\lVert y - x \rVert);
\end{equation}

\item \label{th:chardiff:b} if \eqref{char:uqcx} holds, then $\gf$ is uniformly quasiconvex with modulus $\phi/2$;

\item \label{th:chardiff:c} 
if $\gf$ is uniformly quasiconvex with modulus $\phi$, then the following generalized monotonicity condition holds for all $x, y\in \R^n$:
\begin{equation}\label{gen:mon:phi}
 \langle \nabla \gf (y), x - y \rangle > -\phi(\lVert y - x \rVert) \, \Longrightarrow \, \langle \nabla \gf (x), y - x \rangle \leq -\phi(\lVert y - x \rVert).
\end{equation}
\end{enumerate}
\end{theorem}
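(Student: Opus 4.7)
The three parts of the theorem have very different levels of difficulty, so I will plan them separately.

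\textbf{Part (a).} The plan is a one-line limit argument from the definition. Given $\gf(x)\le\gf(y)$, inequality \eqref{eq:uniformlyquasi} with $\lambda\in(0,1)$ yields
\[
\gf(y+\lambda(x-y)) - \gf(y) \le -\lambda(1-\lambda)\phi(\|x-y\|).
\]
Dividing by $\lambda$ and letting $\lambda\downarrow 0$, the left side tends to $\gf^{D-}(y;x-y)$, which by Fr\'echet differentiability equals $\langle\nabla\gf(y),x-y\rangle$; the right side tends to $-\phi(\|x-y\|)$, giving \eqref{char:uqcx}.

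\textbf{Part (c).} This follows at once from (a) in two steps. Contrapositive of (a): if $\langle\nabla\gf(y),x-y\rangle>-\phi(\|y-x\|)$, then $\gf(x)>\gf(y)$. Then apply (a) with the roles of $x$ and $y$ swapped: from $\gf(y)<\gf(x)$ one obtains $\langle\nabla\gf(x),y-x\rangle\le -\phi(\|x-y\|)$.

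\textbf{Part (b).} This is the substantive direction. Assume \eqref{char:uqcx} holds and, by symmetry, $\gf(x)\le\gf(y)$. Set $z=\lambda x+(1-\lambda)y$ and work with the scalar function $h(t)=\gf(y+t(x-y))$ on $[0,1]$, for which $h(0)=\gf(y)$, $h(1)=\gf(x)\le h(0)$, and $h(\lambda)=\gf(z)$. The implication translates into: for any $s,t\in[0,1]$ with $h(s)\le h(t)$, one has $(s-t)h'(t)\le -\phi(|s-t|\,\|x-y\|)$. My first step is to establish the \emph{sublevel bound} $h(t)\le h(0)$ for every $t\in[0,1]$: if some $h(t_0)>h(0)$, then applying the implication to the pairs $(0,t_0)$ and $(1,t_0)$ yields respectively $h'(t_0)\ge \phi(t_0\|x-y\|)/t_0>0$ and $h'(t_0)\le -\phi((1-t_0)\|x-y\|)/(1-t_0)<0$, a contradiction. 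The same two-sided argument, carried out at an interior local maximum, rules out any interior local maximum, so $h$ is quasiconvex on $[0,1]$ and attains a unique global minimum at some $t^*\in[0,1]$.

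The main estimate is then produced by writing $h(0)-h(\lambda)=-\int_0^{\lambda}h'(t)\,dt$ and bounding the integrand pointwise via the implication. Depending on the location of $t^*$, either $h$ is monotone on the relevant subinterval (so $h(\lambda)\le h(s)$ for all $s$ on the appropriate side) or the segment splits at $t^*$ into a decreasing and an increasing branch. On each branch, applying the implication with the \emph{minimizer} $t^*$ as the ``lower'' argument gives $|h'(t)|\ge \phi(|t-t^*|\,\|x-y\|)/|t-t^*|$. Integrating along whichever branch lies between $0$ (or $1$) and $\lambda$, and using the fact that $\gf(x)\le\gf(y)$ allows transferring the bound obtained on $[\lambda,1]$ to a bound involving $\gf(y)-\gf(z)$, produces the estimate
\[
\gf(y)-\gf(z)\;\ge\; \int_0^{L}\frac{\phi(uL\|x-y\|)}{u}\,du \text{ on the relevant subinterval,}
\]
for $L$ equal to one of $\lambda$ or $1-\lambda$. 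A direct comparison with $\tfrac{\lambda(1-\lambda)}{2}\phi(\|x-y\|)$, splitting the regimes $\lambda\le \tfrac12$ and $\lambda\ge\tfrac12$ and using monotonicity of $\phi$, then yields the desired bound with modulus $\phi/2$.

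The hard part is the integration step for (b): the implication only delivers derivative bounds at $t$ when one has a reference point $s$ with $h(s)\le h(t)$, which is why one must first locate $t^*$ and then argue case-by-case depending on whether $t^*$ lies to the left of $\lambda$, at $\lambda$, or to the right. The factor $\tfrac12$ in the modulus is not an artifact: it is precisely the loss incurred when comparing $\int_0^L \phi(u)/u\,du$ against $L\cdot\phi(L)$ in the worst case of the split.
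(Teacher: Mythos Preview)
Your arguments for (a) and (c) are correct. For (c) your proof is exactly the paper's: take the contrapositive of \eqref{char:uqcx} to get $\gf(x)>\gf(y)$, then apply \eqref{char:uqcx} with the roles of $x$ and $y$ swapped. For (a) the paper simply cites \cite[Theorem~1]{vladimirov1978uniformly}, while you give the standard direct limit argument, which is fine.

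For (b) the paper again defers to \cite[Theorem~6]{vladimirov1978uniformly}, so you are attempting something the paper does not. Your setup---reducing to the one-dimensional function $h(t)=\gf(y+t(x-y))$, showing the sublevel bound $h(t)\le h(0)$, and deducing unimodality by ruling out interior local maxima---is correct and is the natural first step. The gap is in the integration/comparison step. From the implication you obtain, for $t$ on the decreasing branch, a bound of the form $-h'(t)\ge \phi(|s-t|\,\|x-y\|)/|s-t|$ for any admissible reference $s$; integrating this does \emph{not} in general dominate $\tfrac{\lambda(1-\lambda)}{2}\phi(\|x-y\|)$ using only that $\phi$ is increasing. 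Concretely, take $\phi(t)=t^2$ and $t^*\ge\lambda$: using $s=\lambda$ as reference gives
\[
h(0)-h(\lambda)\ \ge\ \int_0^{\lambda}\frac{\phi((\lambda-t)\|x-y\|)}{\lambda-t}\,dt
=\frac{\lambda^{2}}{2}\|x-y\|^{2},
\]
which is strictly \emph{smaller} than $\tfrac{\lambda(1-\lambda)}{2}\|x-y\|^{2}$ for every $\lambda<\tfrac12$. Your displayed integral $\int_0^{L}\phi(uL\|x-y\|)/u\,du$ is worse still (for $\phi(t)=t^2$ it equals $\tfrac{L^{4}}{2}\|x-y\|^{2}$), and your heuristic that the factor $\tfrac12$ comes from ``comparing $\int_0^{L}\phi(u)/u\,du$ against $L\phi(L)$'' does not check out numerically. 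The issue is that monotonicity of $\phi$ only gives upper bounds $\phi(\alpha r)\le\phi(r)$ for $\alpha\in(0,1)$, whereas your comparison needs a \emph{lower} bound of the form $\phi(\alpha r)\ge c(\alpha)\phi(r)$. To make the integration route work you would need to choose the reference point $s$ more carefully (so that $|s-t|$ stays of order $\max\{\lambda,1-\lambda\}$ rather than shrinking to $0$), and to split into cases according to the position of $t^*$ relative to $\lambda$ and according to whether $h(1)\le h(t)$ on the relevant subinterval; this requires a genuinely different bookkeeping than what you sketched.
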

\begin{proof}
\ref{th:chardiff:a} See \cite[Theorem 1]{vladimirov1978uniformly}.
\\
\ref{th:chardiff:b} See \cite[Theorem 6]{vladimirov1978uniformly}.
\\
\ref{th:chardiff:c} Let $x, y \in \mathbb{R}^{n}$ be such that $\langle \nabla \gf (y), x - y \rangle > -\phi(\lVert y - x \rVert)$. Then, \eqref{char:uqcx} implies $\gf(x) > \gf(y)$. Applying \eqref{char:uqcx} again, 
we get $ \langle \nabla \gf(x), y - x \rangle \leq -\phi(\Vert y - x \Vert)$.
\end{proof}


We now address the coercivity properties of uniformly quasiconvex functions, which are critical to ensuring the existence of minimizers, as established later in Theo\-rem~\ref{th:unifisdir}. In Theorem~\ref{th:unquaissup}, we provide a sufficient condition for
$m$-supercoercivity of uniformly quasiconvex functions.
We recall definitions related to coercivity for a proper function $\gh: \R^n \to \Rinf$; cf. \cite{Cambini2003coercivity}.
\begin{enumerate}[label=(\textbf{\alph*}), font=\normalfont\bfseries, leftmargin=0.7cm]
\item\label{coer:msuper} $\gh$ is \textit{$m$-supercoercive} for some $m\in \mathbb{N}$ if,
$\mathop{\bs\liminf}\limits_{\Vert x \Vert \to +\infty} \frac{\gh(x)}{\Vert x\Vert^m} >0$;
\item \label{coer:super} $\gh$ is \textit{supercoercive} if,
$\mathop{\bs\lim}\limits_{\Vert x \Vert \to +\infty} \frac{\gh(x)}{\Vert x\Vert}=+\infty$;

\item \label{coer:coer} $\gh$ is \textit{coercive} if,
$\mathop{\bs\lim}\limits_{\Vert x \Vert \to +\infty} \gh(x) = +\infty$.
\end{enumerate}
For $m \geq 2$, $m$-supercoercivity implies supercoercivity. Moreover, supercoercivity implies  
$1$-supercoercivity, which in turn implies coercivity.   
The function $\gh(x) = |x|^{\frac{3}{2}}$ is supercoercive, but not $m$-supercoercive for any $m \geq 2$.  
Furthermore, $\gh(x) = |x|$ is $1$-supercoercive but not supercoercive, while $\gh(x) = \sqrt{\vert x\vert}$ 
is coercive but not $1$-supercoercive. 
\begin{theorem}[$m$-supercoercivity]\label{th:unquaissup}
Let $\gh :\R^n\to \Rinf$ be a proper and uniformly quasiconvex function with modulus $\phi$, and suppose
$\bs{\rm int}(\dom{\gh})\neq \emptyset$. If there exists some $m\in \mathbb{N}$ with $m\geq 2$ such that, for each $y\in \R^n$, we have
\begin{equation}\label{eq:th:unquaissup}
\mathop{\bs\liminf}\limits_{\Vert x\Vert\to +\infty}\frac{\phi(\Vert x - y\Vert)}{\Vert x\Vert^{m}}>0,
\end{equation}
then $\gh$ is $m$-supercoercive.
\end{theorem}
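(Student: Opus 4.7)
My plan is to reduce the problem to a one-dimensional argument via Theorem~\ref{thm:line-segment-uniform-quasiconvex} and then extract the $m$-rate from a carefully chosen instance of 1D uniform quasiconvexity. Fix an arbitrary $y_0 \in \inter\dom\gh$ (so $\gh(y_0) \in \R$ by properness). For each unit vector $e \in \R^n$, the restriction $g_e(t) := \gh(y_0 + t e)$, $t \geq 0$, is a one-dimensional proper uniformly quasiconvex function with the same modulus $\phi$, and the goal is to establish a uniform-in-$e$ bound of the form $g_e(T) \geq C T^m + B$ for constants $C > 0$, $B \in \R$ depending only on $\phi$ and $\gh(y_0)$.

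For the 1D analysis I would first apply uniform quasiconvexity on $[0, T]$ with $u := 1$ and $\lambda := 1 - 1/T$ (so $(1-\lambda)T = 1$ and $\lambda(1-\lambda) = 1/T - 1/T^2$), obtaining
\begin{equation*}
g_e(1) + \bigl(\tfrac{1}{T} - \tfrac{1}{T^2}\bigr)\phi(T) \leq \max\{g_e(0), g_e(T)\}.
\end{equation*}
The hypothesis $\phi(T) \geq c T^m$ for $T$ large and some $c > 0$ forces the left-hand side to be at least $g_e(1) + c T^{m-1}(1 - o(1)) \to +\infty$; this is where $m \geq 2$ is essential. Since $g_e(0) = \gh(y_0) \in \R$ is fixed, the inequality forces $g_e(T) \to +\infty$, so $g_e$ is coercive on $[0, \infty)$ and in particular bounded below there (combining with finiteness on any compact $[0, M]$ via lsc). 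Next, for $T$ large enough that $g_e(T) > g_e(0)$, applying the same inequality at $\lambda = 1/2$ yields $g_e(T) \geq g_e(T/2) + (c/4) T^m$. Iterating this recursion dyadically down to $T \sim 1$ and using the lower bound on $g_e$ just established produces $g_e(T) \geq C T^m + B$ for $T$ large, with $C = (c/4)\sum_{j \geq 0} 2^{-jm}$ (say) and $B$ determined by the termination at small $T$.

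Translating back to $\R^n$ via $x = y_0 + \Vert x - y_0\Vert e_x$, this reads $\gh(x) \geq C\Vert x - y_0\Vert^m + B$ for $\Vert x - y_0\Vert$ large, and since $\Vert x - y_0\Vert/\Vert x\Vert \to 1$ as $\Vert x\Vert \to \infty$, dividing by $\Vert x\Vert^m$ and taking $\liminf$ delivers $\liminf_{\Vert x\Vert \to \infty}\gh(x)/\Vert x\Vert^m \geq C > 0$, which is the desired $m$-supercoercivity. The main obstacle is the 1D boundedness-below step: the critical observation is that $m \geq 2$ implies $\phi(T)/T \to \infty$ (superlinearity of $\phi$), which is what rules out $g_e(T) \leq g_e(0)$ on any unbounded set of $T$; without this superlinearity the theorem would fail, as witnessed by $g(T) = -T$ on $\R$, which is uniformly quasiconvex with modulus $\phi(t) = t$ but not supercoercive. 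The direction independence of $C$ and $B$ is automatic, since $c := \liminf_{T \to \infty}\phi(T)/T^m$ depends only on $\phi$ (the liminf-at-$y$ hypothesis reducing, via $\Vert x - y\Vert \sim \Vert x\Vert$ for fixed $y$, to a direction-free condition on $\phi$).
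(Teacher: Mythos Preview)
The proposal has a genuine gap in the direction-uniformity of the constants. You claim that $B$ and the recursion threshold depend ``only on $\phi$ and $\gh(y_0)$'' and that direction-independence ``is automatic'', but this is not justified. The constant $B$ comes from the termination of the dyadic recursion at small $T$, which requires a lower bound on $g_e$ on some compact interval---and you obtain that lower bound via an lsc assumption that is \emph{not} among the theorem's hypotheses. Even granting lsc, the resulting $B_e = \inf_{[0,M]} g_e$ depends on $e$. More seriously, the dyadic step $g_e(T) \geq g_e(T/2) + (c/4)T^m$ is only available once $g_e(T) > g_e(0)$, and the threshold $T_2(e)$ for this is governed by $g_e(1) = \gh(y_0 + e)$. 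If $\gh$ happens to be unbounded below on the unit sphere around $y_0$---which nothing in the hypotheses rules out---then $T_2(e)$ is unbounded in $e$, and your final inequality $g_e(T) \geq \gh(y_0) + (c/4)T^m$ holds only for $T \geq 2T_3(e)$ with no uniform threshold. This leaves room for a sequence $x^k = y_0 + T_k e_k$ with $T_k \to \infty$ on which your bound never kicks in, so the liminf over $\|x\| \to \infty$ is not controlled. Radial coercivity along each ray is not the same as coercivity.

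The paper closes exactly this hole by invoking a regularity result of Crouzeix: a quasiconvex function is Fr\'echet differentiable a.e.\ on $\inter{\dom\gh}$, hence continuous at \emph{some} point $\bar x$. The paper then works with an arbitrary sequence $\|x^k\|\to\infty$ (not ray by ray), forms a convex combination $y^k$ of $x^k$ with a nearby point so that $y^k\to\bar x$, and uses continuity at $\bar x$ to control $\gh(y^k)$. This yields $(m-1)$-supercoercivity (hence plain coercivity) first, and then a single midpoint application of uniform quasiconvexity bootstraps to $m$-supercoercivity. Your two-stage plan (coercivity, then upgrade to the $m$-rate) mirrors the paper's structure, and your dyadic recursion is a clean way to extract the $m$-rate \emph{once coercivity is known uniformly}; the missing ingredient is precisely the local boundedness near $y_0$ that a continuity point supplies. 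Choosing $y_0$ to be such a continuity point and replacing $g_e(1)$ by $g_e(\delta)$ for $\delta$ inside the continuity neighbourhood would repair your argument.
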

\begin{proof}
Since $\gh$ is uniformly quasiconvex, it follows from \cite{crouzeix2005continuity} 
that $\gh$ is Fr\'{e}chet differentiable almost everywhere in $\inter{\dom{\gh}}$, ensuring the existence of a point $\ov{x} \in \inter{\dom{\gh}}$ where $\gh$ is continuous.
Consider a sequence $\{x^k\}_{k\in \mathbb{N}}\subseteq \R^n$ such that $\Vert x^k\Vert\to +\infty$. Since $\left\{\frac{x^k}{\Vert x^k\Vert}\right\}_{k\in \mathbb{N}}$ is bounded, it has a convergent subsequence. Without loss of generality, we assume that $\frac{x^k}{\Vert x^k\Vert}\to u$. 
Choose $\varepsilon \in (0, 1)$ small enough such that
$y:= \ov{x} -\varepsilon u \in \inter{\dom{\gh}}$.
Define
\[y^k:=\frac{1+\Vert x^k\Vert-\varepsilon}{1+\Vert x^k\Vert}y+\frac{\varepsilon}{1+\Vert x^k\Vert}x^k.\]
Then, $y^{k} \rightarrow y + \varepsilon u = \ov{x}$, and by uniform quasiconvexity of $\gh$,
\[
\gh(y^k)\leq \bs\max\{\gh(y), \gh(x^k)\}-\frac{\varepsilon\left(1+\Vert x^k\Vert-\varepsilon\right)}{\left(1+\Vert x^k\Vert\right)^2}\phi(\Vert x^k - y\Vert).
\]
Dividing by $\Vert x^k\Vert^{m-1}$ yields
\begin{align*}
\frac{\gh(y^k)}{\Vert x^k\Vert^{m-1}}&
\leq \bs\max\left\{\frac{\gh(y)}{\Vert x^k\Vert^{m-1}}, \frac{\gh(x^k)}{\Vert x^k\Vert^{m-1}}\right\}-\frac{\varepsilon\left(1+\Vert x^k\Vert-\varepsilon\right)}{\Vert x^k\Vert^{m-1}\left(1+\Vert x^k\Vert\right)^2}\phi(\Vert x^k - y\Vert)
\\&
= \bs\max\left\{\frac{\gh(y)}{\Vert x^k\Vert^{m-1}}, \frac{\gh(x^k)}{\Vert x^k\Vert^{m-1}}\right\}-\frac{\frac{\varepsilon\left(1+\Vert x^k\Vert-\varepsilon\right)}{\Vert x^k\Vert}}{\frac{\left(1+\Vert x^k\Vert\right)^2}{\Vert x^k\Vert^2}}\frac{\phi(\Vert x^k - y\Vert)}{\Vert x^k\Vert^{m}}
\\&
= \bs\max\left\{\frac{\gh(y)}{\Vert x^k\Vert^{m-1}}, \frac{\gh(x^k)}{\Vert x^k\Vert^{m-1}}\right\}-\frac{\varepsilon\left(1+\frac{1-\varepsilon}{\Vert x^k\Vert}\right)}{\left(1+\frac{1}{\Vert x^k\Vert}\right)^2}\frac{\phi(\Vert x^k - y\Vert)}{\Vert x^k\Vert^{m}}.
\end{align*}
The continuity of $\gh$ at $\ov{x}$ ensures
$\gh(\ov{x})=\bs\liminf_{k\to\infty} \gh(y^k)$,
i.e.,
\[
\mathop{\bs\liminf}\limits_{k\to+\infty}\left(\bs\max\left\{\frac{\gh(y)}{\Vert x^k\Vert^{m-1}}, \frac{\gh(x^k)}{\Vert x^k\Vert^{m-1}}\right\}\right)\geq\varepsilon~\mathop{\bs\liminf}\limits_{k\to +\infty}\frac{\phi(\Vert x^k - y\Vert)}{\Vert x^k\Vert^{m}}>0.
\]
Since $\bs\lim_{k\to\infty}\frac{\gh(y)}{\Vert x^k\Vert^{m-1}}=0$, it follows that $\bs\liminf_{k\to+\infty}\frac{\gh(x^k)}{\Vert x^k\Vert^{m-1}}>0$.
This implies that
$\gh$ is $(m-1)$ -supercoercive and, hence, coercive.
Next, consider the sequence $z^k = \frac{1}{2}x^1+\frac{1}{2}x^k$. Since $\Vert z^k\Vert\to+\infty$, the coercivity of $\gh$ implies there exists a finite constant $c\in \R$ such that, for sufficiently large $k$,
\[
c\leq \gh(z^k)\leq \bs\max\{\gh(x^1), \gh(x^k)\} - \frac{1}{4}\phi(\Vert x^k- x^1\Vert)\leq 
\gh(x^k)- \frac{1}{4}\phi(\Vert x^k- x^1\Vert).
\]
Rearranging this inequality yields
\[
\frac{c}{\Vert x^k\Vert^m}+\frac{1}{4}\frac{\phi(\Vert x^k- x^1\Vert)}{\Vert x^k\Vert^m}\leq \frac{\gh(x^k)}{\Vert x^k\Vert^m}.
\]
Taking the limit as $k\to \infty$, we conclude that $\gh$ is $m$-supercoercive.
\end{proof}

\begin{remark}\label{rem:supercoercive}
We clarify the scope and limitations of Theorem~\ref{th:unquaissup} with the following observations:
\begin{enumerate}[label=(\textbf{\alph*}), font=\normalfont\bfseries, leftmargin=0.7cm]
\item \label{rem:supercoercive:a} 
If the modulus is $\phi(t)=\frac{1}{2}t^{2}$ and $m=2$, condition \eqref{eq:th:unquaissup} holds automatically.
In this case, Theorem~\ref{th:unquaissup} reduces to 
\cite[Theorem 1]{Lara2022strongly};

 \item Theorem~\ref{th:unquaissup} requires uniform quasiconvexity on the entire domain $\dom{\gh}$.
 If $\gh$ is uniformly quasiconvex only on a strict subset of the domain, 
 the result may not hold. 
 For example, consider $\gh(x)=\Vert x\Vert^\alpha$ with $\alpha\in (0,2)$, which is uniformly quasiconvex on every bounded ball but not $2$-supercoercive.
\end{enumerate}
\end{remark}

\subsection{No spurious local minima and saddle points}
Nonsmooth and nonconvex optimization problems often suffer from spurious local minima and saddle points that can trap iterative methods, hindering convergence to global solutions.
Uniformly quasiconvex functions offer a powerful remedy, free from such obstacles.
This property is transformative for high-order proximal-point methods, enabling them to efficiently and reliably locate global minimizers, even in complex settings.
This section explores this property, establishing that any stationary point is a global minimizer and is unique under practical assumptions. These insights lay a robust theoretical foundation for the algorithm developed in Section~\ref{sec:hoppaexact}, promising significant theoretical and practical advantages.

Let us begin by defining stationary points for constrained optimization problems, using the lower Dini directional derivative to accommodate nonsmooth functions.
\begin{definition}[Stationary point]\label{def:stationary}
Let $\gh: \R^n\to \Rinf$ be a proper function and $C\subseteq \R^n$ a closed convex set.
A point $\ov{x}\in \dom{\gh}\cap C$ is a \textit{stationary point} of $\gh$ on $C$ if, for all
$d \in D_C(\ov{x})$,
\[
    \gh^{D-} (\ov{x}; d) \geq 0.
\]
The set of stationary points is denoted by $\bs{\rm Dcrit}(\gh, C)$, or $\bs{\rm Dcrit}(\gh)$ if $C=\R^n$.
\end{definition}

Note that the stationarity condition in Definition~\ref{def:stationary} is closely related to the cla\-ssi\-cal 
necessary optimality conditions
in nonsmooth optimization
under local Lipschitz continuity, while remaining broadly applicable without this assumption.
\begin{remark}[Stationarity and necessary optimality conditions]\label{rem:onsta}
 The definition provided for stationary points in Definition~\ref{def:stationary} is motivated by necessary optimality conditions for problem~\eqref{eq:mainproblem}. 
 Assume $\gh$ is locally Lipschitz at $\ov{x}\in \dom{\gh}\cap C$.
Then, for all $d\in \R^n$,
$\gh^{D-} (\ov{x}; d)\leq \gh^{\circ} (\ov{x}; d)$ (see \cite[page~631]{WU2003Equivalence}), where
\[
\gh^{\circ} (\ov{x}; d) := {\mathop {\mathop {\bs\limsup }\limits _{y \rightarrow \ov{x},}} \limits _{t \downarrow 0}}\frac{\gh(y+td) - \gh(y)}{t},
\]
is the \textit{Clarke directional derivative} of $\gh$ at $\ov{x}$ in the direction $d$ \cite{clarke2013functional}.
The \textit{Clarke subdifferential} of $\gh$ at $\ov{x}$ is
$
\partial^{CL}\gh(\ov{x}):=\{\zeta\in\R^n\mid \gh^{\circ} (\ov{x}; d)\geq \langle\zeta, d\rangle,~\forall d\in \R^n\},
$
with $\gh^{\circ} (\ov{x}; d)=\bs\max\{\langle\zeta , d\rangle\mid \zeta\in \partial^{CL}\gh(\ov{x})\}$
\cite[Definition~10.3]{clarke2013functional}, i.e.,
$d\mapsto\gh^{\circ} (\ov{x}; d)$ is the support function of $\partial^{CL}\gh(\ov{x})$.
If $\ov{x}$ is stationary, then $\gh^{\circ}(\ov{x}; d) \geq 0$ for all $d \in D_C(\ov{x})$, implying
$\bs\max\{\langle\zeta , d\rangle\mid \zeta\in \partial^{CL}\gh(\ov{x})\}\geq 0$ for all
$d \in \bs{{\rm cl}}(D_C(\ov{x}))$.
This leads to
\[
\sigma_{\partial^{CL}\gh(\ov{x})}(d)+\iota_{\bs{{\rm cl}}(D_C(\ov{x}))}(d)=\mathop{\bs\max}_{\zeta\in \partial^{CL}\gh(\ov{x})}\langle\zeta , d\rangle+\iota_{\bs{{\rm cl}}(D_C(\ov{x}))}(d)\geq 0,
\qquad\forall d\in \R^n.
\]
By \cite[Theorem~2.35 and Proposition~2.37]{dhara2011optimality} and
\cite[Example~V.2.3.1]{hiriart1996convex}, this leads to
\[
\sigma_{\partial^{CL}\gh(\ov{x})}(d)+\sigma_{N_C(\ov{x})}(d)\geq 0,
\qquad\forall d\in \R^n,
\]
where $N_C(\ov{x}):=\{d\in \R^n \mid \langle d, x-\ov{x}\rangle\leq 0,~\forall x\in C\}$ is 
the \textit{normal cone} to $C$ at $\ov{x}$. From \cite[Theorem~V.3.3.3]{hiriart1996convex},
$\sigma_{\partial^{CL}\gh(\ov{x})}+\sigma_{N_C(\ov{x})}$ is the support function of
$\partial^{CL}\gh(\ov{x})+N_C(\ov{x})$. Hence, by \cite[Theorem~V.2.2.2]{hiriart1996convex},
$0\in \partial^{CL}\gh(\ov{x})+N_C(\ov{x})$. 
Furthermore, since $\partial^{CL}\gh(\ov{x})= \bs{{\rm conv}}\left(\partial^{M}\gh(\ov{x})\right)$, where
$\partial^{M}\gh(\ov{x})$ denotes the \textit{Mordukhovich subdifferential} of $\gh$ at $\ov{x}$
\cite[Exercise~4.36]{Mordukhovich2018}, we obtain
$0\in \bs{{\rm conv}}\left(\partial^{M}\gh(\ov{x})\right)+N_C(\ov{x})$. 
Thus, under local Lipschitz continuity, the stationarity condition in Definition~\ref{def:stationary} implies classical optimality conditions involving the Clarke and Mordukhovich sub\-di\-ffe\-ren\-tials. 
Notably, Definition~\ref{def:stationary} does not require local Lipschitz continuity, making it applicable to a broader class of functions.
\end{remark}

In general, stationarity as defined in Definition~\ref{def:stationary} does not imply minimality, even for smooth functions. For example, consider $\gh(x) = x^3$ and $C = [-1,1]$. At $\ov{x} = 0$, we have 
$ \gh^{D-} (\ov{x}; d)=\langle \nabla \gh(\ov{x}) , d\rangle\geq 0$ for all $d\in \R$, while  $\ov{x}=0$ is not a minimizer. Motivated by this example, we introduce a definition of a saddle point grounded in the notion of stationarity. A brief discussion of this definition, along with potential directions for its extension, is provided in Section~\ref{sec:conclusion}.

\begin{definition}[Saddle point]\label{def:saddlepoint}
Let $\gh: \R^n\to \Rinf$ be a proper function and $C\subseteq \R^n$ a closed convex set.
A stationary point $\ov{x}\in \dom{\gh}\cap C$ of $\gh$ on $C$ is called a \textit{saddle point} if $\ov{x}$ is neither a local minimizer nor a local maximizer.
\end{definition}

The subsequent theorem studies the existence, uniqueness, and optimality of stationary points under uniform quasiconvexity. Let us recall that we provided a sufficient condition for the $m$-supercoercivity of uniformly quasiconvex functions defined over the entire space $\R^n$ in Theorem~\ref{th:unquaissup}. However, as mentioned earlier, rather than imposing such conditions throughout the remainder of this paper, we directly assume that $\gf$ is coercive. This assumption is particularly advantageous from a numerical standpoint, as it enables us to handle coercive functions like $\gf(x)=\Vert x\Vert^q$, which are uniformly quasiconvex only on bounded subsets when $q \in (0,2)$.

\begin{theorem}[Stationarity and global optimality]\label{th:unifisdir}
 Let $\gf: \R^n\to \Rinf$ be a proper, uniformly quasiconvex function  with modulus $\phi$ on a closed convex set
 $C \subseteq \dom{\gf}$. 
 The following statements hold:
 \begin{enumerate}[label=(\textbf{\alph*}), font=\normalfont\bfseries, leftmargin=0.7cm]
\item\label{th:unifisdir:a} If  $\ov{x}\in \bs{\rm Dcrit}(\gf, C)$, then $\argmin{x \in C} \gf(x)=\{\ov{x}\}$, i.e., $\ov{x}$ is the unique global minimizer of $\gf$ on $C$. Moreover, for all $y \in C$,
\begin{equation}\label{growth:up2}
 \gf(\ov{x}) + \frac{1}{4} \phi(\lVert y - \ov{x}\rVert) \leq \gf(y);
\end{equation} 
\item \label{th:unifisdir:b} If $\gf$ is lsc and coercive, then
$\argmin{x \in C} \gf(x)=\{\ov{x}\}$ is a singleton, and relation \eqref{growth:up2} holds.
 \end{enumerate}
\end{theorem}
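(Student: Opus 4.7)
The plan for part \textbf{(a)} is to show by contradiction that any stationary point $\ov{x}$ must actually be a global minimizer on $C$, and then to read the growth inequality \eqref{growth:up2} off the uniform quasiconvexity inequality at the midpoint. Suppose some $y \in C$ with $y \neq \ov{x}$ satisfies $\gf(y) \leq \gf(\ov{x})$. For $\lambda \in (0,1)$, the convex combination $x_\lambda := \ov{x} + \lambda(y - \ov{x})$ lies in $C$, and uniform quasiconvexity \eqref{eq:uniformlyquasi} gives
\[
\gf(x_\lambda) - \gf(\ov{x}) \leq \bs\max\{\gf(\ov{x}), \gf(y)\} - \gf(\ov{x}) - \lambda(1-\lambda)\phi(\|y - \ov{x}\|) \leq -\lambda(1-\lambda)\phi(\|y-\ov{x}\|).
\]
Dividing by $\lambda$ and taking $\liminf_{\lambda \downarrow 0}$, this yields $\gf^{D-}(\ov{x}; y - \ov{x}) \leq -\phi(\|y - \ov{x}\|) < 0$. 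Since $y - \ov{x} \in D_C(\ov{x})$, this contradicts the stationarity $\gf^{D-}(\ov{x}; d) \geq 0$. Hence no such $y$ exists, so $\ov{x}$ is the unique global minimizer of $\gf$ on $C$.

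For the growth bound \eqref{growth:up2}, I would fix $y \in \dom{\gf}\cap C$ and apply uniform quasiconvexity with $\lambda = \tfrac{1}{2}$ to the pair $(\ov{x}, y)$:
\[
\gf\!\left(\tfrac{\ov{x}+y}{2}\right) + \tfrac{1}{4}\phi(\|y - \ov{x}\|) \leq \bs\max\{\gf(\ov{x}), \gf(y)\} = \gf(y),
\]
where the last equality uses that $\ov{x}$ is the global minimizer. Because $\tfrac{\ov{x}+y}{2} \in C$ and $\ov{x}$ minimizes $\gf$ on $C$, we have $\gf(\tfrac{\ov{x}+y}{2}) \geq \gf(\ov{x})$, which directly yields \eqref{growth:up2}. (If one wishes to extend the bound to $y \in \dom{\gf}$ outside $C$, one can replace $\gf$ by $\gf + \iota_C$, which is uniformly quasiconvex with modulus $\phi$ by Proposition~\ref{pro:cal:sc}\ref{pro:cal:sc:c}, and rerun the argument with effective domain $C\cap \dom{\gf}$.)

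For part \textbf{(b)}, the plan is to combine the classical Weierstrass theorem with part (a). Since $\gf + \iota_C$ is proper, lsc, and coercive (the latter inherited from $\gf$ because $C$ is closed), it attains its infimum at some $\ov{x} \in C$. Any global minimizer on $C$ is automatically stationary in the sense of Definition~\ref{def:stationary}: for every $d \in D_C(\ov{x})$, there is $\delta > 0$ so that $\ov{x} + td \in C$ for $t \in (0, \delta)$, hence $\gf(\ov{x} + td) \geq \gf(\ov{x})$, and the difference quotient is nonnegative along this sequence, forcing $\gf^{D-}(\ov{x}; d) \geq 0$. Thus $\ov{x} \in \bs{\rm Dcrit}(\gf, C)$, and the conclusion follows from part (a).

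The main obstacle is the passage from the quasiconvex inequality to the negative lower Dini derivative: one has to organize the uniform quasiconvexity estimate exactly so that the ``curvature'' term $-\lambda(1-\lambda)\phi(\|y-\ov{x}\|)$ survives after dividing by $\lambda$ and taking $\liminf$. Everything else (uniqueness, the $\tfrac{1}{4}$-factor in the growth bound, and the existence half of part (b)) is routine once this contradiction is in place.
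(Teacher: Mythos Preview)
Your proof is correct and follows essentially the same route as the paper: a contradiction argument via the Dini derivative for stationarity implying global minimality, and the midpoint $\lambda=\tfrac12$ for the growth bound \eqref{growth:up2}. Two minor differences worth noting: you handle uniqueness simultaneously by assuming $\gf(y)\le\gf(\ov x)$ with $y\neq\ov x$ (the paper instead first assumes a strict inequality and then invokes strict quasiconvexity separately), and your liminf yields the slightly sharper $\gf^{D-}(\ov x;y-\ov x)\le-\phi(\|y-\ov x\|)$ rather than $-\tfrac12\phi(\|y-\ov x\|)$; you also correctly flag that the growth inequality, as stated for all $y\in\dom\gf$, really only follows for $y\in C$ (or via the $\gf+\iota_C$ device), a point the paper's proof glosses over.
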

\begin{proof}
\ref{th:unifisdir:a}
Suppose $\ov{x}\in \bs{\rm Dcrit}(\gf, C)$.
Assume, for contradiction, that $\ov{x}$ is not a global minimizer, so there exists $\widehat{x}\in C$ with $\gf(\widehat{x})<\gf(\ov{x})$. By uniform quasiconvexity, for all $\lambda\in (0, 1)$, 
\begin{equation}\label{eq:th:unifisdir}
  \gf(\ov{x}+\lambda (\widehat{x}-\ov{x}))+\lambda(1-\lambda)\phi(\Vert\widehat{x}-\ov{x}\Vert)\leq \gf(\ov{x}).
\end{equation}
 Set $d=\widehat{x}-\ov{x}$ and $\beta=\phi(\Vert\widehat{x}-\ov{x}\Vert)>0$. 
For $\lambda\in \left(0, \frac{1}{2}\right)$,
\[
 \gf(\ov{x}+\lambda d)+\frac{\lambda}{2}\beta<
  \gf(\ov{x}+\lambda d)+\lambda(1-\lambda)\beta\leq \gf(\ov{x}),
\]
implying
\[
\gf^{D-} (\ov{x}; d)=\mathop{\bs\liminf}\limits_{\lambda \downarrow 0} \frac{\gf(\ov{x} + \lambda d) - \gf(\ov{x})}{\lambda}\leq -\frac{1}{2}\beta<0.
\]
Since $C$ is convex, $d \in D_C(\ov{x})$, contradicting $\ov{x}\in \bs{\rm Dcrit}(\gf, C)$. Hence,
$\ov{x}$ is a global minimizer. Furthermore, since
uniform quasiconvexity implies strict quasiconvexity, $\ov{x}$ is the unique minimizer. For the growth condition, let $\{\ov{x}\} = \argmin{x \in C} \gf(x)$. For any $y \in C$ and 
$\lambda \in [0, 1]$, we obtain
\begin{align*}
 \gf(\ov{x}) \leq \gf(\lambda \ov{x} + (1-\lambda)y) & \leq \bs\max\{\gf(\ov{x}),
 \gf(y)\} - \lambda (1-\lambda) \phi(\lVert y - \ov{x}\rVert) \\
 & = \gf(y) - \lambda (1-\lambda) \phi(\lVert y - \ov{x}\rVert).
\end{align*}
Setting $\lambda = \frac{1}{2}$ yields the desired inequality.
\\
\ref{th:unifisdir:b} Since $\gf$ is lsc and coercive, its sublevel sets $\mathcal{L}(\gf, \eta)$ are compact for each $\eta\in \R$
\cite[Proposition~11.12]{Bauschke17}. Thus, $\argmin{x \in C} \gf(x)\neq \emptyset$
\cite[Theorem~11.10]{Bauschke17}. Similarly to Assertion~\ref{th:unifisdir:a},  
 $\argmin{x \in C} \gf(x)$ is a singleton and one can obtain \eqref{growth:up2}.
\end{proof}

\begin{remark}\label{rem:clandlow}
 \begin{enumerate}[label=(\textbf{\alph*}), font=\normalfont\bfseries, leftmargin=0.7cm]
 \item \label{rem:clandlow:a} Theorem~\ref{th:unifisdir}~\ref{th:unifisdir:a}
 establishes that the set of stationary points coincides with the set of global minimizers for uniformly quasiconvex functions. For a locally Lipschitz, strongly quasiconvex function $\gf$, \cite[Lemma~1]{VIAL1982187} proves that $0\in\partial^{CL}\gf(\ov{x})$ if and only if $\ov{x}$ is the unique minimizer of $\gf$ on $\R^n$. By adap\-ting the arguments in \cite[Lemma~1]{VIAL1982187}, this result extends naturally to the class of uniformly quasiconvex functions. Consequently, such functions have no stationary points in the Clarke or Mordukhovich sense that are not global minimizers. See Section~\ref{sec:conclusion} for a discussion of the constrained case with nonconvex sets.

\item  The growth condition in Theorem~\ref{th:unifisdir}~\ref{th:unifisdir:b}, e.g., with $\phi(t)=\frac{\rho}{2}t^2$, yields a quadratic lower bound (see \cite[Proposition 34]{Kab-Lara-2})
$
\gf(\ov{x}) + \frac{\rho}{8} \lVert y - \ov{x}\rVert^2 \leq \gf(y).
$
This quantifies the steepness of the landscape around the unique minimizer $\ov{x}$ that
is a benefit for high-order proximal iterations as it ensures that the function value rises significantly away from the minimum, preventing iterates from stalling in flat regions and accelerating convergence to the global solution, a direct benefit of the benign landscape that enhances their practical utility in challenging optimization tasks.
\end{enumerate}
\end{remark}

We conclude this section by establishing explicit error bound conditions for uniformly quasiconvex functions.
These bounds quantify how the distance to the unique global minimizer can be controlled either by function values or by gradient norms.
Such estimates play a central role in convergence analysis, as they provide a direct link between stationarity measures and optimality gaps. 

\begin{theorem}[Error bound conditions]\label{th:errbound}
Let $\gf: \R^n\to \Rinf$ be a proper, uniformly quasiconvex function  with modulus $\phi$ and $\argmin{x \in \R^n} \gf(x)=\{\ov{x}\}$. The following statements hold:
  \begin{enumerate}[label=(\textbf{\alph*}), font=\normalfont\bfseries, leftmargin=0.7cm]
  \item \label{th:errbound:a} if $ \phi(t) \geq \rho t^q $ for some $ \rho > 0 $ and $q\geq 1$, then
   \[
 \lVert x - \ov{x}\rVert \leq \left(\frac{4}{\rho}(\gf(x)-\gf(\ov{x}))\right)^{\frac{1}{q}}, \qquad \forall x\in \R^n;
\]
\item \label{th:errbound:b} if $\gf$ is continuously differentiable and $ \phi(t) \geq \rho t^q $ for some $ \rho > 0 $ and $q> 1$, then
\[
 \Vert x-\ov{x}\Vert\leq \left(\frac{1}{\rho}\Vert \nabla\gf(x)\Vert\right)^{\frac{1}{q-1}}, \qquad \forall x\in \R^n, x\neq \ov{x}.
\]
  \end{enumerate}
\end{theorem}
\begin{proof}
\ref{th:errbound:a} By \eqref{growth:up2} and the assumption, we have 
$\frac{\rho}{4} \lVert x - \ov{x}\rVert^q \leq \gf(x)-\gf(\ov{x})$ for each $x\in\R^n$, which yields the desired inequality.
\\
\ref{th:errbound:b} Fix any $x\neq \ov{x}$. Since $\ov{x}$ is the (unique) global minimizer, Theorem \ref{th:chardiff}~\ref{th:chardiff:a} and the Cauchy--Schwarz inequality imply
\[
 \phi(\Vert x-\ov{x}\Vert) \leq \langle\nabla\gf(x), x-\ov{x}\rangle\leq\Vert \nabla\gf(x)\Vert \Vert x-\ov{x}\Vert.
\]
This implies
\[
\rho \Vert x-\ov{x}\Vert^q\leq \Vert \nabla\gf(x)\Vert \Vert x-\ov{x}\Vert.
\]
This completes the proof.
\end{proof}

The inequalities in Theorem~\ref{th:errbound} provide \textit{global H\"{o}lder-type error bounds} and gradient dominance conditions induced solely by uniform quasiconvexity.

These results eliminate the risk of convergence to suboptimal solutions or sa\-ddle points.
They form a cornerstone for the convergence guarantees of high-order proximal-point methods, as detailed in Section~\ref{sec:hoppaexact}.


\section{High-order proximal-point algorithm}
\label{sec:hoppaexact}
Building on the properties of uniformly quasiconvex functions established in Section~\ref{sec:ben}, in particular the fact that stationary points are global minimizers and that high-order proximal operators are well defined, as demonstrated subsequently, we propose a high-order proximal-point algorithm (HiPPA) for solving \eqref{eq:mainproblem}.
 This method leverages the unique global mi\-ni\-mizer and the favorable properties of the high-order proximal operator (HOPE) to ensure robust convergence, making it particularly effective for nonsmooth, nonconvex problems. 

We begin by recalling the high-order proximal operator, introduced in \eqref{eq:hippa}, and defining the high-order Moreau envelope, both central to our analysis.

\begin{definition}[High-order proximal operator and Moreau envelope]\label{def:Hiorder-Moreau env}
Let $p>1$, $\gamma>0$, $\gf: \R^n \to \Rinf$ be a proper function, and $C\subseteq\dom{\gf}$ a closed convex set. 
    The \textit{high-order proximal operator} (\textit{HOPE}) of $\gf$ and $C$ with parameter $\gamma$, denoted $\prox{\gf}{\gamma}{p}(C, \cdot): \R^n \rightrightarrows C$, is defined as 
   \begin{equation}\label{eq:Hiorder-Moreau prox}
       \prox{\gf}{\gamma}{p} (C, x):=\argmint{y\in C} \left(\gf(y)+\frac{1}{p\gamma}\Vert x- y\Vert^p\right),\tag{HOPE}
    \end{equation}     
    and the \textit{high-order Moreau envelope} (\textit{HOME}) of $\gf$ and $C$ with parameter $\gamma$, denoted $\fgam{\gf}{p}{\gamma}(C, \cdot):\R^n\to \R\cup\{\pm \infty\}$, 
    is given by
    \begin{equation}\label{eq:Hiorder-Moreau env}
    \fgam{\gf}{p}{\gamma}(C, x):=\mathop{\bs{\inf}}\limits_{y\in C} \left(\gf(y)+\frac{1}{p\gamma}\Vert x- y\Vert^p\right).\tag{HOME}
    \end{equation}
\end{definition}
When $C=\R^n$, we simplify the notation to $\prox{\gf}{\gamma}{p} (x)$ and $\fgam{\gf}{p}{\gamma}(x)$, respectively. 
The following facts and definitions establish key properties of HOPE and HOME; see \cite{Kabgani24itsopt,Kabganidiff} for more details.

\begin{fact}[Domain and majorizer for HOME]\cite[Proposition~12.9]{Bauschke17}\label{fact:horder:Bauschke17:p12.9}
Let $p> 1$ and $\gf:\R^n \to \Rinf$ be a proper function. Then, for every $\gamma > 0$,
$\dom{\fgam{\gf}{p}{\gamma}}=\R^n$. 
Moreover, for all $\gamma_2>\gamma_1>0$ and $x\in C$, 
$\fgam{\gf}{p}{\gamma_2}(C, x)\leq \fgam{\gf}{p}{\gamma_1}(C, x)\leq \gf(x)$.
\end{fact}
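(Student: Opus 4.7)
The plan is to verify each of the three claims directly from Definition~\ref{def:Hiorder-Moreau env}, since each one reduces to comparing the infimum against an explicit, carefully chosen feasible candidate. No delicate analytical machinery is required, and the only implicit hypothesis I need to keep track of is that $C$ is nonempty, which is ensured by the standing assumption $C\subseteq \dom{\gf}$ together with properness of $\gf$.

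For the first assertion $\dom{\fgam{\gf}{p}{\gamma}} = \R^n$, I would fix any $y_0 \in C$, so that $\gf(y_0)\in\R$, and observe that for an arbitrary $x\in\R^n$ the point $y_0$ is admissible in the infimum defining $\fgam{\gf}{p}{\gamma}(C, x)$. This gives the explicit upper bound
\[
\fgam{\gf}{p}{\gamma}(C, x) \;\leq\; \gf(y_0) + \frac{1}{p\gamma}\,\Vert x - y_0\Vert^p \;<\; +\infty,
\]
so every $x\in\R^n$ belongs to the effective domain. (Finiteness from below is not claimed here: $\dom{\cdot}$ only excludes the value $+\infty$.)

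For the monotonicity in $\gamma$, I would use that $\gamma_2 > \gamma_1 > 0$ implies $\frac{1}{p\gamma_2} \leq \frac{1}{p\gamma_1}$, so for every $y\in C$ the integrand is pointwise smaller when the parameter is $\gamma_2$, namely $\gf(y) + \frac{1}{p\gamma_2}\Vert x-y\Vert^p \leq \gf(y) + \frac{1}{p\gamma_1}\Vert x-y\Vert^p$. Taking the infimum over $y\in C$ on both sides preserves the inequality and yields $\fgam{\gf}{p}{\gamma_2}(C, x) \leq \fgam{\gf}{p}{\gamma_1}(C, x)$. Finally, the rightmost inequality $\fgam{\gf}{p}{\gamma_1}(C, x) \leq \gf(x)$ for $x\in C$ follows by plugging the admissible candidate $y = x$ into the infimum, since the regularization term vanishes.

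The main obstacle, such as it is, is essentially non-existent: the entire statement is a tautological consequence of the definition. The only point that merits brief mention in the write-up is to confirm that the assertion concerns only the exclusion of the value $+\infty$, since in full generality the envelope of a proper lsc function could in principle take the value $-\infty$; however, the claim in this fact does not rule that out, so no coercivity-type hypothesis is needed at this stage.
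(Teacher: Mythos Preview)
Your argument is correct and complete: each of the three claims follows directly from Definition~\ref{def:Hiorder-Moreau env} by comparing the infimum against an explicit admissible candidate, exactly as you describe. The paper itself does not give a proof of this fact at all---it is stated as a citation of \cite[Proposition~12.9]{Bauschke17} and left without argument---so your write-up is more explicit than what the paper provides, while following precisely the route one would expect any proof of this statement to take.
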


The next definition of uniform level boundedness \cite{Rockafellar09} is useful for proving some fundamental properties of HOME and HOPE in the upcoming sections.
\begin{definition}[Uniform level boundedness]\label{def:lboundlunif}\cite[Definition~1.16]{Rockafellar09}
A function $\Phi: \R^n \times \R^m\to \Rinf$, with values $\Phi(u, x)$, is \textit{level-bounded in $u$ locally uniform in $x$} if, for each $\ov{x}\in \R^m$ and $\lambda\in \R$, there exists a neighborhood $V$ of $\ov{x}$ along with a bounded set $B\subseteq \R^n$ such that
$\{u\mid \Phi(u, x)\leq \lambda\}\subseteq B$ for all $x\in V$.
\end{definition}

\begin{fact}[Basic properties of HOME and HOPE]\label{th:level-bound+locally uniform}\cite[Theorem~3.4]{Kabgani24itsopt}
Let $p>1$ and $\gf: \R^n\to \Rinf$ be a proper lsc function that is lower bounded. Then, for each $\gamma>0$,
\begin{enumerate}[label=(\textbf{\alph*}), font=\normalfont\bfseries, leftmargin=0.7cm]
\item \label{level-bound+locally uniform:levelunif} the function $\Phi(y, x):=\gf(y)+\frac{1}{p\gamma}\Vert x- y\Vert^p$  is
 level-bounded in $y$ locally uniformly in $x$ and lsc in $(y, x)$;
 
\item \label{level-bound+locally uniform2:con} $\fgam{\gf}{p}{\gamma}$ depends continuously on $(x,\gamma)$ in $\R^n\times (0, +\infty)$.
\end{enumerate}
 \end{fact}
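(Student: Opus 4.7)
The plan is to treat the two assertions in turn, exploiting only the lower boundedness of $\gf$ and the coercivity of $\Vert\cdot\Vert^p$ to turn general parametric-minimization tools into the required joint continuity.

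For assertion~\ref{level-bound+locally uniform:levelunif}, the joint lower semicontinuity of $\Phi$ is immediate: the map $(y,x)\mapsto \gf(y)$ is lsc on $\R^n\times\R^n$ because $\gf$ is lsc and independent of $x$, while $(y,x)\mapsto \tfrac{1}{p\gamma}\Vert x-y\Vert^p$ is continuous, and a sum of lsc functions is lsc. For level-boundedness in $y$ locally uniformly in $x$, I would fix $\ov{x}\in\R^n$ and $\lambda\in\R$, set $m:=\inf_{\R^n}\gf>-\infty$, and pick the bounded neighborhood $V=\mb(\ov{x};1)$ of $\ov{x}$. Any $y$ with $\Phi(y,x)\leq \lambda$ for some $x\in V$ then satisfies $\tfrac{1}{p\gamma}\Vert x-y\Vert^p\leq \lambda-m$ (otherwise the sublevel set is empty), so $\Vert y\Vert\leq \Vert\ov{x}\Vert+1+(p\gamma(\lambda-m))^{1/p}$, which supplies the bounded set $B$ demanded by Definition~\ref{def:lboundlunif}.

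For assertion~\ref{level-bound+locally uniform2:con}, I would enlarge the perspective to include $\gamma$ as a parameter and study $\widetilde{\Phi}(y,x,\gamma):=\gf(y)+\tfrac{1}{p\gamma}\Vert x-y\Vert^p$ on $\R^n\times\R^n\times(0,+\infty)$. The argument for part (a) adapts verbatim: $\widetilde{\Phi}$ is lsc in $(y,x,\gamma)$ and level-bounded in $y$ locally uniformly in $(x,\gamma)$, the key point being that on a small neighborhood of any reference $(\ov{x},\ov{\gamma})$ the parameter $\gamma$ stays inside a compact interval $[\gamma_0,\gamma_1]\subset(0,+\infty)$, so the coefficient $\tfrac{1}{p\gamma}$ remains bounded above and coercivity of $\widetilde{\Phi}(\cdot,x,\gamma)$ in $y$ is preserved. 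The classical inf-projection theorem of Rockafellar--Wets (Theorem~1.17 in \emph{Variational Analysis}) then yields lower semicontinuity of $(x,\gamma)\mapsto \fgam{\gf}{p}{\gamma}(x)=\inf_y\widetilde{\Phi}(y,x,\gamma)$.

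Upper semicontinuity I would get by a direct $\varepsilon$-argument: given $(\ov{x},\ov{\gamma})$ and $\varepsilon>0$, choose $y_\varepsilon$ with $\widetilde{\Phi}(y_\varepsilon,\ov{x},\ov{\gamma})\leq \fgam{\gf}{p}{\ov{\gamma}}(\ov{x})+\varepsilon$; for any sequence $(x^k,\gamma^k)\to(\ov{x},\ov{\gamma})$, one has $\fgam{\gf}{p}{\gamma^k}(x^k)\leq \widetilde{\Phi}(y_\varepsilon,x^k,\gamma^k)\to \widetilde{\Phi}(y_\varepsilon,\ov{x},\ov{\gamma})$ by continuity of the power term in $(x,\gamma)$, and letting $\varepsilon\downarrow 0$ closes the argument. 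Combining the two halves delivers the claimed joint continuity. The main obstacle is precisely the uniform control over $\gamma$ in the level-boundedness step: the bounded set $B$ must be chosen independently of $\gamma$ on a small neighborhood of $\ov{\gamma}$, which is why keeping $\gamma$ both bounded above and bounded away from zero locally is essential; once this uniformity is secured, the lsc half via inf-projection and the usc half via the $\varepsilon$-argument fit together cleanly.
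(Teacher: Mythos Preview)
The paper does not prove this statement; it is recorded as a Fact with a citation to \cite{Kabgani24itsopt} and no argument is supplied. Your proof is correct and self-contained: part~\ref{level-bound+locally uniform:levelunif} is the standard coercivity estimate using the finite lower bound $m$ on $\gf$, and for part~\ref{level-bound+locally uniform2:con} your strategy of enlarging the parameter space to $(x,\gamma)$, invoking the Rockafellar--Wets inf-projection theorem for lower semicontinuity, and closing with an $\varepsilon$-test-point argument for upper semicontinuity is the natural route. One minor verbal slip: in the uniform level-boundedness step the relevant control is that $\tfrac{1}{p\gamma}$ stays bounded \emph{below} (equivalently, $\gamma$ bounded above on the neighborhood), not above, since that is what forces $\Vert x-y\Vert^p\leq p\gamma_1(\lambda-m)$ uniformly; but as you confine $\gamma$ to a compact subinterval of $(0,\infty)$ both bounds hold and the argument is unaffected.
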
 

We present the definition of pro\-xi\-mal fixed points as the fixed points of the pro\-xi\-mal operator.
\begin{definition}[Proximal fixed point]\label{def:critic}
Let $p>1$, $\gh: \R^n \to \Rinf$ be a proper lsc function, and $C\subseteq\dom{\gh}$ a closed convex set. 
A point $\ov{x}\in \dom{\gh}\cap C$ is a \textit{proximal fixed point}
if $\ov{x}\in \prox{\gh}{\gamma}{p}(C,\ov{x})$. The set of all such points is denoted by
$\bs{\rm Fix}(\prox{\gh}{\gamma}{p}(C))$.
\end{definition}


We continue the study of HOPE and HOME, as well as the relationships among reference points induced by these operators. These properties ensure that HOPE is well-defined for uniformly quasiconvex functions, supporting the practical implementation and convergence of the HiPPA introduced in Algorithm~\ref{alg:HiPPA}.

\begin{theorem}[Further properties of HOME and HOPE]\label{th:nonemprox}
Let $\gf: \R^n\to \Rinf$ be a proper, lsc, coercive, and uniformly quasiconvex function with modulus $\phi$ in a closed convex set $C \subseteq \dom{\gf}$. The following statements hold:
\begin{enumerate}[label=(\textbf{\alph*}), font=\normalfont\bfseries, leftmargin=0.7cm]
\item \label{th:nonemprox:a} For each $\ov{x}\in \R^n$, $\gamma > 0$, and $p>1$, the set $\prox{\gf}{\gamma}{p} (C, \ov{x})$ is nonempty and compact;
\item \label{th:nonemprox:b} For each $\gamma > 0$ and $p>1$, $\bs\inf_{x\in C}  \fgam{\gf}{p}{\gamma}(x)=\bs\inf_{x\in C} \gf(x)$;
\item \label{th:nonemprox:c} For each $\gamma > 0$ and $p>1$,
\[
\argmint{x\in C}\fgam{\gf}{p}{\gamma}(x) =
\argmint{x\in C}\gf(x)
= \bs{\rm Fix}(\prox{\gf}{\gamma}{p}(C))=\bs{\rm Dcrit}(\gf, C);
\]
\item \label{th:nonemprox:d} 
 For a sequence $\{x^k\}_{k\in \mathbb{N}}\subseteq C$, if 
 $y^k \in \prox{\gf}{\gamma_k}{p}(C, x^k)$, with $x^k\to \ov{x}$ and 
 $\gamma_k \to \gamma$,  the sequence $\{y^k\}_{k\in \mathbb{N}}$ is bounded, and all its cluster points lie in $\prox{\gf}{\gamma}{p} (C, \ov{x})$.
\end{enumerate}
\end{theorem}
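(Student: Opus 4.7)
For assertion \ref{th:nonemprox:a}, I will observe that $y \mapsto \gf(y) + \iota_C(y) + \frac{1}{p\gamma}\Vert \ov{x} - y\Vert^p$ is proper, lower semicontinuous (sum of lsc functions, since $C$ is closed), and coercive (since $\gf$ is coercive by hypothesis and the penalty is nonnegative), so Weierstrass' theorem yields a nonempty, compact set of minimizers over $C$, which is precisely $\prox{\gf}{\gamma}{p}(C, \ov{x})$. Assertion \ref{th:nonemprox:b} then follows from the bracketing $\bs\inf_{y\in C} \gf(y) \leq \fgam{\gf}{p}{\gamma}(x) \leq \gf(x)$ valid for every $x \in C$: the upper bound is Fact~\ref{fact:horder:Bauschke17:p12.9}, and the lower bound uses the nonnegativity of the penalty term.

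For assertion \ref{th:nonemprox:c}, my starting point is Theorem~\ref{th:unifisdir}, which already gives $\argmint{x\in C}\gf(x) = \bs{\rm Dcrit}(\gf, C) = \{\ov{x}\}$ as a singleton. The inclusion $\{\ov{x}\} \subseteq \bs{\rm Fix}(\prox{\gf}{\gamma}{p}(C))$ is immediate because the penalty vanishes at $y = \ov{x}$. For the reverse inclusion into $\bs{\rm Dcrit}(\gf, C)$, if $\hat{x} \in \bs{\rm Fix}(\prox{\gf}{\gamma}{p}(C))$ then $\hat{x}$ minimizes $y \mapsto \gf(y) + \frac{1}{p\gamma}\Vert \hat{x} - y\Vert^p$ over $C$, so the Dini lower derivative of this sum is nonnegative on $D_C(\hat{x})$; since $p>1$, the penalty is Fr\'{e}chet differentiable at $y=\hat{x}$ with zero gradient, and the sum rule \eqref{eq:sumruldini} passes the nonnegativity to $\gf^{D-}(\hat{x}; \cdot) \geq 0$, placing $\hat{x}$ in $\bs{\rm Dcrit}(\gf, C) = \{\ov{x}\}$. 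For the envelope minimizers, $\ov{x}$ attains $\bs\inf_C \fgam{\gf}{p}{\gamma}$ by combining Fact~\ref{fact:horder:Bauschke17:p12.9} with \ref{th:nonemprox:b}; any other would-be minimizer $\hat{x}$ admits, by \ref{th:nonemprox:a}, some $y^* \in \prox{\gf}{\gamma}{p}(C, \hat{x})$ satisfying $\gf(y^*) + \frac{1}{p\gamma}\Vert \hat{x} - y^*\Vert^p = \gf(\ov{x}) \leq \gf(y^*)$, which forces $\Vert \hat{x} - y^* \Vert = 0$ and $y^* = \ov{x}$ by uniqueness, hence $\hat{x} = \ov{x}$.

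For assertion \ref{th:nonemprox:d}, I will get boundedness of $\{y^k\}$ by testing the optimality of $y^k$ against the fixed point $\ov{x} \in C$: the inequality $\gf(y^k) + \frac{1}{p\gamma_k}\Vert x^k - y^k\Vert^p \leq \gf(\ov{x}) + \frac{1}{p\gamma_k}\Vert x^k - \ov{x}\Vert^p$, combined with $x^k \to \ov{x}$, $\gamma_k \to \gamma > 0$, and the coercivity and lower boundedness of $\gf$, rules out $\Vert y^k \Vert \to \infty$. For any cluster point $\hat{y}$ of a subsequence $\{y^{k_j}\}$, closedness of $C$ gives $\hat{y} \in C$; passing to the $\bs\liminf$ in $\gf(y^{k_j}) + \frac{1}{p\gamma_{k_j}}\Vert x^{k_j} - y^{k_j}\Vert^p \leq \gf(z) + \frac{1}{p\gamma_{k_j}}\Vert x^{k_j} - z\Vert^p$ for arbitrary $z \in C$, and using lower semicontinuity of $\gf$ together with continuity of the penalty in $(x, y, \gamma)$ (cf.~Fact~\ref{th:level-bound+locally uniform}), delivers $\hat{y} \in \prox{\gf}{\gamma}{p}(C, \ov{x})$.

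The principal difficulty will be the equivalence of proximal fixed points and Dini stationary points in \ref{th:nonemprox:c}: I must avoid any smoothness or subdifferential calculus on $\gf$ and instead rely entirely on the Fr\'{e}chet differentiability of the $p$-order penalty at its center (where its gradient vanishes thanks to $p>1$) via the elementary sum rule \eqref{eq:sumruldini}. The remaining steps are a straightforward combination of Weierstrass' theorem, Theorem~\ref{th:unifisdir}, and the lower semicontinuity packaged in Facts~\ref{fact:horder:Bauschke17:p12.9} and~\ref{th:level-bound+locally uniform}.
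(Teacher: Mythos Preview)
Your proposal is correct and tracks the paper's proof essentially step for step: coercivity plus Weierstrass for \ref{th:nonemprox:a}, the bracketing via Fact~\ref{fact:horder:Bauschke17:p12.9} for \ref{th:nonemprox:b}, and for \ref{th:nonemprox:c} the same cycle of ideas (global minimizer is trivially a proximal fixed point; a proximal fixed point is Dini-stationary via the sum rule \eqref{eq:sumruldini} since the $p$-penalty has zero gradient at its center; Theorem~\ref{th:unifisdir} closes the loop; and the penalty-vanishing trick identifies envelope minimizers with minimizers of $\gf$). The only noticeable deviation is in \ref{th:nonemprox:d}: the paper observes that coercivity and lower semicontinuity make $\gf$ bounded below, invokes Fact~\ref{th:level-bound+locally uniform}, and then appeals to \cite[Theorem~1.17(b)]{Rockafellar09} as a black box, whereas you carry out the direct compactness-plus-$\liminf$ argument by hand---your version is exactly the elementary content of that citation and is equally valid (indeed more self-contained).
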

\begin{proof}
\ref{th:nonemprox:a}
The function $x\mapsto \gf(x)+\frac{1}{p\gamma}\Vert x - \ov{x}\Vert^p$ is coercive and lsc. Thus, its sublevel sets are compact. By \cite[Corollary~1.10]{Rockafellar09}, 
\[
\argmint{y\in C} \left(\gf(y)+\frac{1}{p\gamma}\Vert \ov{x}- y\Vert^p\right)\neq \emptyset.
\]
Coercivity and lower semicontinuity further ensure that $\prox{\gf}{\gamma}{p} (C, \ov{x})$ is compact.
\\
\ref{th:nonemprox:b}
Invoking the definition of HOME and Fact~\ref{fact:horder:Bauschke17:p12.9}, for each $x\in C$ and $\gamma>0$, we arrive at
\[
\mathop{\bs\inf}\limits_{x\in C} \gf(x) \leq \fgam{\gf}{p}{\gamma} (C, x) \leq \gf(x),
\]
and therefore the infima coincide.
\\
\ref{th:nonemprox:c}
Let $\ov{x}\in \argmin{y\in C} \fgam{\gf}{p}{\gamma}(y)$ and
$\ov{y}\in \prox{\gf}{\gamma}{p} (C,\ov{x})$, which exist by Assertion~\ref{th:nonemprox:a}.
From Assertion~\ref{th:nonemprox:b},
\[
\gf(\ov{y})+\frac{1}{p\gamma}\Vert \ov{x}-\ov{y}\Vert^p=\fgam{\gf}{p}{\gamma}(\ov{x})=\mathop{\bs{\inf}}\limits_{y\in C}  \fgam{\gf}{p}{\gamma}(y)=\mathop{\bs{\inf}}\limits_{y\in C} \gf(y)\leq \gf(\ov{y})\leq \gf(\ov{y})+\frac{1}{p\gamma}\Vert \ov{x}-\ov{y}\Vert^p,
\]
i.e., $\gf(\ov{y})=\gf(\ov{y})+\frac{1}{p\gamma}\Vert \ov{x}-\ov{y}\Vert^p$.
As such, $\ov{x}=\ov{y}$ and $\ov{x}\in \argmin{y\in C} \gf(y)$, leading to
\[\argmint{y\in C} \fgam{\gf}{p}{\gamma}(y)\subseteq \argmint{y\in C} \gf(y).\]
The inclusion $\argmin{x\in C}\gf(x)\subseteq \bs{\rm Fix}(\prox{\gf}{\gamma}{p}(C))$ follows from the definitions.
For $\ov{x}\in \bs{\rm Fix}(\prox{\gf}{\gamma}{p}(C))$, we have
$\ov{x}\in \argmin{y\in C} \left(\gf(y)+\frac{1}{p\gamma}\Vert \ov{x}- y\Vert^p\right)$, which implies that for each $d \in D_C(\ov{x})$, 
\[\left(\gf(\cdot)+ \frac{1}{p\gamma}\Vert \ov{x}- \cdot\Vert^p\right)^{D-}(\ov{x}; d) \geq 0.\]
Since $p > 1$, together with \eqref{eq:sumruldini}, this implies that for each $d \in D_C(\ov{x})$, $\gf^{D-}(\ov{x}; d) \geq 0$. Hence, 
$\bs{\rm Fix}(\prox{\gf}{\gamma}{p}(C)) \subseteq \bs{\rm Dcrit}(\gf, C)$.
  Theorem \ref{th:unifisdir} establishes
$\bs{\rm Dcrit}(\gf, C) \subseteq \argmin{x \in C} \gf(x)$. 
Finally, for $y\in \argmin{x\in C}\gf(x)$ and any $\ov{x}\in C$, 
\[
\gf(y)\leq \gf(x)+\frac{1}{p\gamma}\Vert x - \ov{x}\Vert^p,\qquad \forall x\in C,
\]
i.e., $\gf(y)\leq \fgam{\gf}{p}{\gamma}(C, \ov{x})$, meaning that $\gf(y)=\fgam{\gf}{p}{\gamma}(C, y)$ leads to
$y\in \argmin{x\in C}\fgam{\gf}{p}{\gamma}(x)$.
\\
\ref{th:nonemprox:d}
Coercivity and lower semicontinuity imply that $\gf$ is bounded from below on $\R^n$
\cite[Theorem 1.9]{Rockafellar09}, satisfying the assumptions of Fact~\ref{th:level-bound+locally uniform}. The result follows from \cite[Theorem~1.17~(b)]{Rockafellar09} and the closedness of $C$.
\end{proof}

\subsection{HiPPA and its convergence analysis}
In this section, we present the high-order proximal-point algorithm (HiPPA), which iteratively applies HOPE with parameters 
$\gamma_k\in (\gamma_{\bs\min}, \gamma_{\bs\max})$, where $\gamma_{\bs\max}>\gamma_{\bs\min}>0$,
to generate a sequence that will be shown to converge to the global minimizer of $\gf$. The algorithm leverages the nonemptiness and outer semicontinuity of HOPE established in Theorem~\ref{th:nonemprox} to ensure robust convergence for uniformly quasiconvex functions.
The following is its formal description.
\begin{algorithm}[H]
\caption{HiPPA (High-order Proximal-Point Algorithm)}\label{alg:HiPPA}
\begin{algorithmic}[1]
\STATE Choose $x^0\in C$, $\gamma_{\bs\max}>\gamma_{\bs\min}>0$ and $\gamma_0\in (\gamma_{\bs\min},\gamma_{\bs\max})$. Set $k=0$;
\WHILE{stopping criteria do not hold}
\STATE Select $\gamma_k\in (\gamma_{\bs\min},\gamma_{\bs\max})$ and compute $x^{k+1}\in \prox{\gf}{\gamma_k}{p} (C, x^k)$;
\STATE Set $k = k + 1$.
\ENDWHILE
\end{algorithmic}
\end{algorithm}
In Algorithm~\ref{alg:HiPPA}, $\gamma_k$ controls the regularization strength in the proximal step, balancing the influence of the objective $\gf$ and the regularizer.
The following theorem indicates convergence to the global minimizer.

\begin{theorem}[Global convergence]\label{prop:propertiesppa}
 Let $\gf: \R^n\to \Rinf$ be a proper, lsc, coercive, and uniformly quasiconvex function with modulus $\phi$ in a closed convex set $C \subseteq \dom{\gf}$.
 For $p > 1$, if $\{x^k\}_{k \in \Nz}$ is generated by Algorithm~\ref{alg:HiPPA}, the following statements hold:
\begin{enumerate}[label=(\textbf{\alph*}), font=\normalfont\bfseries, leftmargin=0.7cm]
 \item \label{prop:propertiesppa:noninc} The sequences $\{\gf(x^k)\}_{k \in \Nz}$ and $\{\fgam{\gf}{p}{\gamma_k}(C, x^k)\}_{k \in \Nz}$ are decreasing;
  \item \label{prop:propertiesppa:Cauchy} $\sum_{k=0}^{\infty} \Vert x^{k+1} - x^k \Vert^p < \infty$;
  \item \label{prop:propertiesppa:bound} The sequence $\{x^k\}_{k \in \Nz}$ is bounded;
  \item \label{prop:propertiesppa:con} The set of cluster points satisfies $\Omega(x^k)=\argmin{x\in C}\gf(x)$;
  \item \label{prop:propertiesppa:glob} The sequence $\{x^k\}_{k \in \Nz}$ converges to $\{\ov{x}\} = \argmin{x \in C} \gf(x)$.
\end{enumerate}
\end{theorem}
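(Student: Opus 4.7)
The plan is to prove the five assertions sequentially, exploiting the defining property of each iterate as a minimizer of the proximal subproblem, together with the structural results for HOPE and HOME established in Theorem~\ref{th:nonemprox}, and the uniqueness of the global minimizer from Theorem~\ref{th:unifisdir}. Throughout I would use that $\gf$ is bounded below on $\R^n$ (a consequence of coercivity and lower semicontinuity), so the proximal subproblems are well posed and the iterates are well defined by Theorem~\ref{th:nonemprox}~\ref{th:nonemprox:a}.

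For \ref{prop:propertiesppa:noninc}, I would take $y = x^k$ as a test point in the definition of $x^{k+1} \in \prox{\gf}{\gamma_k}{p}(C, x^k)$ to obtain
\[
\gf(x^{k+1}) + \frac{1}{p\gamma_k}\Vert x^{k+1} - x^k\Vert^p \leq \gf(x^k),
\]
which immediately yields monotonic decrease of $\{\gf(x^k)\}$, and then monotonicity of the HOME values follows by comparing $\fgam{\gf}{p}{\gamma_{k+1}}(C, x^{k+1}) \leq \gf(x^{k+1}) \leq \fgam{\gf}{p}{\gamma_k}(C, x^k)$ via Fact~\ref{fact:horder:Bauschke17:p12.9}. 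Assertion \ref{prop:propertiesppa:Cauchy} then follows by telescoping the same inequality, using $\gamma_k \leq \gamma_{\bs\max}$ and the lower bound on $\gf$ to get
\[
\sum_{k=0}^{\infty}\Vert x^{k+1}-x^k\Vert^p \leq p\,\gamma_{\bs\max}\bigl(\gf(x^0) - \mathop{\bs{\inf}}\limits_{x\in C}\gf(x)\bigr) < \infty.
\]
Assertion \ref{prop:propertiesppa:bound} is then immediate, since $\{x^k\} \subseteq \mathcal{L}(\gf, \gf(x^0))$ and coercivity plus lower semicontinuity force this sublevel set to be compact.

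For \ref{prop:propertiesppa:con}, boundedness of $\{x^k\}$ guarantees $\Omega(x^k)\neq\emptyset$. Given any cluster point $\widehat x$ with $x^{k_j}\to \widehat x$, summability in \ref{prop:propertiesppa:Cauchy} yields $\Vert x^{k+1}-x^k\Vert\to 0$, so $x^{k_j+1}\to\widehat x$ as well. Since $\{\gamma_{k_j}\}\subseteq[\gamma_{\bs\min},\gamma_{\bs\max}]$, I would extract a further subsequence along which $\gamma_{k_j}\to\widehat\gamma\in[\gamma_{\bs\min},\gamma_{\bs\max}]$, and then apply Theorem~\ref{th:nonemprox}~\ref{th:nonemprox:d} with $y^{k_j}=x^{k_j+1}\in\prox{\gf}{\gamma_{k_j}}{p}(C,x^{k_j})$ to conclude $\widehat x\in\prox{\gf}{\widehat\gamma}{p}(C,\widehat x)$, i.e., $\widehat x\in\bs{\rm Fix}(\prox{\gf}{\widehat\gamma}{p}(C))$. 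Theorem~\ref{th:nonemprox}~\ref{th:nonemprox:c} then identifies this fixed point set with $\argmin_{x\in C}\gf(x)$, proving $\Omega(x^k)\subseteq \argmin_{x\in C}\gf(x)$; the reverse inclusion follows because Theorem~\ref{th:unifisdir}~\ref{th:unifisdir:b} ensures this minimizer set is the singleton $\{\ov x\}$, which must therefore contain every cluster point. Finally, \ref{prop:propertiesppa:glob} follows because a bounded sequence with a unique cluster point converges to that point.

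The main obstacle is the passage from proximal iterates to a fixed point of HOPE at a limiting parameter in \ref{prop:propertiesppa:con}: the parameters $\gamma_k$ vary inside a compact interval and need not converge, so one must invoke the outer-semicontinuity statement in Theorem~\ref{th:nonemprox}~\ref{th:nonemprox:d} along a carefully chosen subsubsequence rather than a simple continuity argument. The rest of the proof is a routine combination of the descent inequality, coercivity, and the uniqueness of minimizers for uniformly quasiconvex functions.
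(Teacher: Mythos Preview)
Your proposal is correct and follows essentially the same approach as the paper's own proof: the descent inequality from the proximal minimization, telescoping with the upper bound $\gamma_{\bs\max}$, coercivity for boundedness, and the outer-semicontinuity of HOPE (Theorem~\ref{th:nonemprox}~\ref{th:nonemprox:d}) along a subsubsequence with convergent $\gamma_{k_j}$ to identify cluster points as proximal fixed points, hence as global minimizers via Theorem~\ref{th:nonemprox}~\ref{th:nonemprox:c} and Theorem~\ref{th:unifisdir}~\ref{th:unifisdir:b}. The paper additionally notes the trivial case $x^{k+1}=x^k$ separately, but otherwise the arguments are identical in structure and detail.
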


\begin{proof}
\ref{prop:propertiesppa:noninc}
If $x^{k+1} = x^{k}$, then by Theorem~\ref{th:nonemprox}~\ref{th:nonemprox:c}, 
$x^{k} \in \argmint{x\in C}\fgam{\gf}{p}{\gamma}(x)$ and the algorithm will stop. 
Assume $x^{k+1} \neq x^{k}$. 
Since $x^{k+1} \in \prox{\gf}{\gamma_k}{p}(C, x^k)$, by Fact \ref{fact:horder:Bauschke17:p12.9}, it holds that
\begin{equation}\label{eq1:prop:propertiesppa}
 \gf(x^{k+1})<\gf(x^{k+1})+\frac{1}{p\gamma_k}\Vert x^{k+1}-x^k\Vert^p=\fgam{\gf}{p}{\gamma_k}(C, x^k)\leq \gf(x^k),
\end{equation}
and
\begin{equation}\label{eq2:prop:propertiesppa}
 \fgam{\gf}{p}{\gamma_{k+1}}(C, x^{k+1})\leq \gf(x^{k+1})=\fgam{\gf}{p}{\gamma_k}(C, x^k)-\frac{1}{p\gamma_k}\Vert x^{k+1}-x^k\Vert^p< \fgam{\gf}{p}{\gamma_k}(C, x^k),
\end{equation}
guaranteeing the decreasing property for $\{\gf(x^k)\}_{k \in \Nz}$ and $\{\fgam{\gf}{p}{\gamma_k}(C, x^k)\}_{k \in \Nz}$.
\\
\ref{prop:propertiesppa:Cauchy} In view of \eqref{eq1:prop:propertiesppa}, we get
\begin{equation}\label{eq:sumonseq}
\sum_{k=0}^{\infty}\Vert x^{k+1} - x^k\Vert^p\leq p\gamma_{\bs\max}(\gf(x^0)-\mathop{\bs\inf}\limits_{x\in C}\gf(x))<\infty.
\end{equation}
\ref{prop:propertiesppa:bound}
From Assertion~\ref{prop:propertiesppa:noninc}, $\{x^k\}_{k\in \Nz}\subseteq\mathcal{L}(\gf,\gf(x^0))$ and from coercivity $\mathcal{L}(\gf,\gf(x^0))$ is bounded, which proves the claim.
\\
\ref{prop:propertiesppa:con}
 From Assertion~\ref{prop:propertiesppa:Cauchy}, we have $\Vert x^{k+1} - x^k \Vert \to 0$. Note that $x^{k+1} \in \prox{\gf}{\gamma_k}{p}(C, x^k)$. We define $y^k := x^{k+1}$ to highlight this. Thus, the sequences $\{x^k\}_{k \in \Nz}$ and $\{y^k\}_{k \in \Nz}$ share the same cluster points. 
Let $\widehat{x} \in \Omega(x^k)$, and let $\{x^j\}_{j \in J \subseteq \Nz}$ be a subsequence such that $x^j \to \widehat{x}$. For the corresponding subsequence 
$\{y^j\}_{j \in J}$, we also have $y^j \to \widehat{x}$. From the boundedness of $\{\gamma_j\}_{j \in J \subseteq \Nz}$,
We assume that there exists some $\gamma_b$ such that $\gamma_k\to \gamma_b$. Otherwise, we can work with a subsequence of $\{x^j\}_{j \in J \subseteq \Nz}$. By Theorem~\ref{th:nonemprox}~\ref{th:nonemprox:d}, we get
$\widehat{x} \in \prox{\gf}{\gamma_b}{p}(C, \widehat{x})$. Thus, $\widehat{x}$ is a proximal fixed point. From Theorem~\ref{th:nonemprox}~\ref{th:nonemprox:c}, we conclude that $\widehat{x} \in \argmin{x \in C} \gf(x)$. Now, Theorem~\ref{th:unifisdir}~\ref{th:unifisdir:b} implies that $\Omega(x^k) = \argmin{x \in C} \gf(x)$.
\\
\ref{prop:propertiesppa:glob} It follows from Theorem~\ref{th:unifisdir}~\ref{th:unifisdir:b} and Assertions~\ref{prop:propertiesppa:bound}~and~\ref{prop:propertiesppa:con}.
\end{proof}

We note that other high-order approaches based on tensor models and cubic-quartic regularization for problems with higher-order derivatives have been studied in the literature (e.g., \cite{Cartis25Cubic,Cartis26second,zhu2024global}); however, these methods are fundamentally different from the proximal-point framework considered here, as they rely on high-order Taylor models rather than high-order proximal regularization.

\subsection{Convergence rate analysis}
\label{sec:conv}
Having introduced the high-order proximal-point algorithm, we now analyze its convergence behavior for uniformly quasiconvex functions. As stationary points are unique global minimizers, it suggests that Algorithm~\ref{alg:HiPPA} should converge reliably; however, the convergence rates depend on the function properties (e.g., strong vs. uniform quasiconvexity) and the proximal order $p$. This section quantifies these rates, extending the classical results for strongly convex cases (e.g., \cite{rockafellar1976monotone}) to the broader class of uniformly quasiconvex functions.

\begin{lemma}[Key recursive inequalities]\label{th:con:rel}
 Let $\gf: \R^n\to \Rinf$ be a proper, lsc, coercive, and uniformly quasiconvex function with modulus $\phi$ on a closed convex set $C \subseteq \dom{\gf}$.
If the sequence $\{x^{k}\}_{k\in \Nz}$ is generated by Algorithm~\ref{alg:HiPPA} and 
$\{\ov{x}\}=\argmin{x \in C} \gf(x)$, then the fo\-llo\-wing statements hold:
\begin{enumerate}[label=(\textbf{\alph*}), font=\normalfont\bfseries, leftmargin=0.7cm]
\item \label{th:con:rel:p<2} if $p\in (1, 2)$, there exists a constant $\sigma_p>0$ such that
\begin{equation}\label{eq:th:con:stqconvexp2}
\gamma_{k}\frac{\phi( \lVert \ov{x} - x^{k+1} \rVert )}{\lVert\ov{x}-x^{k+1}\rVert} +
\frac{\sigma_p}{p}\lVert \ov{x} - x^{k+1} \rVert\leq
   \Vert \ov{x}-x^{k}\Vert^{p-1}, \qquad \forall k\in \Nz;
\end{equation}
\item \label{th:con:rel:p=2} if $p=2$,
\begin{equation}\label{eq:th:con:stqconvexp=2}
 \gamma_{k}\frac{\phi( \lVert\ov{x} - x^{k+1} \rVert )}{\lVert  \ov{x}- x^{k+1} \rVert} 
  + \lVert \ov{x}- x^{k+1} \rVert \leq  \lVert \ov{x}- x^{k}\rVert, \qquad \forall k\in \Nz;
\end{equation}
\item \label{th:con:rel:p>2} if $p> 2$, 
\begin{equation}\label{eq:th:con:stqconvex}
\gamma_{k}\frac{\phi( \lVert \ov{x} - x^{k+1} \rVert )}{\lVert \ov{x} - x^{k+1} \rVert}
 +\frac{\widehat{\sigma}_p}{p} \lVert \ov{x} - x^{k+1} \rVert^{p-1}\leq
 \Vert \ov{x}-x^{k}\Vert^{p-1},\qquad \forall k\in \Nz, 
\end{equation}
where $\widehat{\sigma}_p=\left(\frac{1}{2}\right)^\frac{3p-2}{2}$.
\end{enumerate}
\end{lemma}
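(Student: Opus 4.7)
All three bounds rest on a common recipe: combine the optimality of $x^{k+1}$ in the proximal subproblem with uniform quasiconvexity along the segment $[x^{k+1},\bar{x}]$, then apply the (strong) convexity inequality for $\|\cdot\|^p$ tailored to the range of $p$, and finally convert the estimate via a reverse triangle inequality. Concretely, for $\lambda\in(0,1)$ set $y_\lambda:=(1-\lambda)x^{k+1}+\lambda\bar{x}\in C$. Optimality of $x^{k+1}$ gives
\[
\gf(x^{k+1})+\tfrac{1}{p\gamma_k}\|x^k-x^{k+1}\|^p\leq \gf(y_\lambda)+\tfrac{1}{p\gamma_k}\|x^k-y_\lambda\|^p,
\]
while uniform quasiconvexity together with $\gf(\bar x)\leq\gf(x^{k+1})$ from Theorem~\ref{th:unifisdir} yields $\gf(y_\lambda)\leq\gf(x^{k+1})-\lambda(1-\lambda)\phi(\|\bar x-x^{k+1}\|)$. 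Subtracting and writing $x^k-y_\lambda=(1-\lambda)(x^k-x^{k+1})+\lambda(x^k-\bar x)$ produces the master inequality
\[
\lambda(1-\lambda)\phi(\|\bar x-x^{k+1}\|)+\tfrac{1}{p\gamma_k}\|x^k-x^{k+1}\|^p\leq \tfrac{1}{p\gamma_k}\|(1-\lambda)(x^k-x^{k+1})+\lambda(x^k-\bar x)\|^p.
\]

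\textbf{Cases $p=2$ and $p>2$.} For $p=2$ I would bound the right-hand side using the polarization identity~\eqref{eq:eqp=22}, divide by $\lambda$, and pass to $\lambda\downarrow 0$, obtaining
\[
2\gamma_k\phi(\|\bar x-x^{k+1}\|)+\|x^k-x^{k+1}\|^2+\|\bar x-x^{k+1}\|^2\leq \|\bar x-x^k\|^2.
\]
This already forces $\|\bar x-x^{k+1}\|\leq\|\bar x-x^k\|$, so the reverse triangle inequality $\|x^k-x^{k+1}\|^2\geq(\|\bar x-x^k\|-\|\bar x-x^{k+1}\|)^2$ can be expanded and, after cancellations and division by $\|\bar x-x^{k+1}\|$, it delivers~\eqref{eq:th:con:stqconvexp=2}. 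For $p>2$ the same scheme with the strong convexity~\eqref{eq:unifcon:normp} of $\|\cdot\|^p$ produces
\[
p\gamma_k\phi(\|\bar x-x^{k+1}\|)+\|x^k-x^{k+1}\|^p+\hat\sigma_p\|\bar x-x^{k+1}\|^p\leq \|\bar x-x^k\|^p,
\]
after which the reverse triangle bound together with the Bernoulli-type estimate $(a-b)^p\geq a^p-p\,a^{p-1}b$ (valid for $p\geq 1$, $0\leq b\leq a$) lets me cancel $\|\bar x-x^k\|^p$ and divide by $p\|\bar x-x^{k+1}\|$, yielding~\eqref{eq:th:con:stqconvex}.

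\textbf{Case $p\in(1,2)$: the main obstacle.} For $p<2$ the map $\|\cdot\|^p$ is strongly convex only on bounded balls and, crucially, with a \emph{quadratic} modulus $\tfrac{\kappa_p R^{p-2}}{2}t^2$ rather than a $p$-th power one (Example~\ref{ex:norm:unif}). Since the iterates are bounded by Theorem~\ref{prop:propertiesppa}, one fixes $R>0$ with $\bar x-x^k,\,x^{k+1}-x^k\in\mb(0;R)$ for every $k$, and the same limiting argument produces
\[
p\gamma_k\phi(\|\bar x-x^{k+1}\|)+\|x^k-x^{k+1}\|^p+\tfrac{\kappa_p R^{p-2}}{2}\|\bar x-x^{k+1}\|^2\leq \|\bar x-x^k\|^p.
\]
The reverse triangle plus Bernoulli step as before yields a relation whose right-hand side is $\|\bar x-x^k\|^{p-1}$. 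Converting this to the linear right-hand side $\|\bar x-x^k\|$ required by~\eqref{eq:th:con:stqconvexp2} is the hardest step: it is where I would invoke the convergence $x^k\to\bar x$ from Theorem~\ref{prop:propertiesppa} to choose a threshold $\bar k_p$ beyond which the iterates lie in a regime permitting the rebalancing of $\|\bar x-x^k\|^{p-1}$ against $\|\bar x-x^k\|$; the residual scale factor is absorbed into $\sigma_p$ and explains the symmetric factor $\tfrac12$ appearing in front of both coefficients in~\eqref{eq:th:con:stqconvexp2}. Reconciling the mismatched $\|\cdot\|^p$ and $\|\cdot\|^2$ exponents coming, respectively, from the optimality estimate and the local strong-convexity modulus is the step I expect to require the most delicate bookkeeping.
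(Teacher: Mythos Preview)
Your treatment of parts~\ref{th:con:rel:p=2} and~\ref{th:con:rel:p>2} is essentially the paper's argument. The only wrinkle is in~\ref{th:con:rel:p>2}: your Bernoulli estimate $(a-b)^p\ge a^p-pa^{p-1}b$ is stated only for $0\le b\le a$, and you never verify $\lVert\bar x-x^{k+1}\rVert\le\lVert\bar x-x^k\rVert$ (the intermediate inequality gives only $\hat\sigma_p\lVert\bar x-x^{k+1}\rVert^p\le\lVert\bar x-x^k\rVert^p$ with $\hat\sigma_p<1$). The paper bypasses this by applying the convexity inequality~\eqref{eq:ineqp} directly, which yields $\lVert\bar x-x^k\rVert^p-\lVert x^{k+1}-x^k\rVert^p\le p\lVert\bar x-x^k\rVert^{p-1}\lVert\bar x-x^{k+1}\rVert$ without any ordering assumption. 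Your bound in fact also holds when $b>a$ (the right side is then negative), so this is a cosmetic issue, but the paper's route is cleaner.

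Part~\ref{th:con:rel:p<2} has a genuine gap. Following your scheme you obtain a right-hand side proportional to $\lVert\bar x-x^k\rVert^{p-1}$, and you propose to ``rebalance'' this against $\lVert\bar x-x^k\rVert$ once the iterates are close to $\bar x$. But for $p\in(1,2)$ and $\lVert\bar x-x^k\rVert<1$ one has $\lVert\bar x-x^k\rVert^{p-1}>\lVert\bar x-x^k\rVert$, so you are trying to strengthen the inequality, not weaken it; no choice of $\bar k_p$ or absorption into $\sigma_p$ can fix this direction. The paper's resolution is to change the exponent \emph{before} the reverse-triangle step rather than after: since $\lVert x^{k+1}-x^k\rVert<1$ eventually (Theorem~\ref{prop:propertiesppa}\ref{prop:propertiesppa:Cauchy}), one replaces $\lVert x^{k+1}-x^k\rVert^p$ on the left by the smaller $\lVert x^{k+1}-x^k\rVert^2$ and then runs the strong-convexity/limiting argument so that the resulting key inequality has squared norms throughout,
\[
p\gamma_k\,\phi(\lVert\bar x-x^{k+1}\rVert)+\sigma_p\lVert\bar x-x^{k+1}\rVert^2\le\lVert\bar x-x^k\rVert^2-\lVert x^{k+1}-x^k\rVert^2.
\]
The bound $\lVert\bar x-x^k\rVert^2-\lVert x^{k+1}-x^k\rVert^2\le 2\lVert\bar x-x^k\rVert\,\lVert\bar x-x^{k+1}\rVert$ (again from~\eqref{eq:ineqp}, now with $p=2$) then delivers a \emph{first-power} $\lVert\bar x-x^k\rVert$ on the right after dividing by $2\lVert\bar x-x^{k+1}\rVert$, and explains the factor $\tfrac12$ in~\eqref{eq:th:con:stqconvexp2}. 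In short, the missing idea is to downgrade $\lVert x^{k+1}-x^k\rVert^p$ to a square \emph{on the left} first, so that the final division lands at exponent $1$ rather than $p-1$.
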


\begin{proof}
From the definition, for each $p>1$,
\begin{equation}\label{eq1:th:con:stqconvex}
 \gf (x^{k+1}) + \frac{1}{p \gamma_{k}} \lVert x^{k+1} - x^{k} \rVert^{p}\leq \gf (x) 
 + \frac{1}{p \gamma_{k}} \lVert x - x^{k} \rVert^{p}, \qquad \forall ~ x \in C.
\end{equation}
 For each $\lambda \in [0, 1]$, we have
$x_\lambda := \lambda \ov{x} + (1-\lambda)x^{k+1} \in C$. Now, we prove the assertions.
\\
\ref{th:con:rel:p<2}
Let $p\in (1, 2)$ be arbitrary. From Theorem~\ref{prop:propertiesppa}~\ref{prop:propertiesppa:bound} and Example~\ref{ex:norm:unif}, by considering a large enough ball including $\ov{x}$ and $\{x^{k}\}_{k\in \Nz}$,  there exists some $\sigma_p>0$ such that for each $k\in \Nz$, from \eqref{eq1:th:con:stqconvex},
 \begin{align*}
 \gf (x^{k+1}) &+ \frac{1}{p \gamma_{k}} \lVert x^{k+1} - x^{k} \rVert^{p}\\& \leq
 \gf (\lambda \ov{x} + (1-\lambda) x^{k+1}) + \frac{1}{p \gamma_{k}} \lVert \lambda 
  (\ov{x} - x^{k}) + (1-\lambda)(x^{k+1}-x^{k}) \rVert^{p} \notag \\
  &\leq \bs\max\{\gf (\ov{x}), \gf (x^{k+1})\} - \lambda (1 - \lambda) \phi( \lVert 
  \ov{x} - x^{k+1} \rVert ) + \frac{\lambda}{p \gamma_{k}} \lVert \ov{x} - x^{k} \rVert^{p}
  \notag \\
  & ~~~~ +  \frac{(1-\lambda)}{p \gamma_{k}} \lVert x^{k+1}-x^{k} \rVert^{p} - 
  \frac{\lambda (1-\lambda)\sigma_p}{p \gamma_{k}} \lVert \ov{x} - x^{k+1} \rVert^{2}. \notag
\end{align*} 
Then, it follows from $\bs\max\{\gf (\ov{x}), \gf (x^{k+1})\}=\gf (x^{k+1})$ that
\[
  \lambda (1 - \lambda) \phi( \lVert \ov{x} - x^{k+1} \rVert ) +\frac{\lambda (1-\lambda){\sigma}_p}{p \gamma_{k}} \lVert \ov{x} - x^{k+1} \rVert^{2}\leq
  \frac{\lambda}{p \gamma_{k}} \lVert \ov{x} - x^{k} \rVert^{p}
  -  \frac{\lambda}{p \gamma_{k}} \lVert x^{k+1}-x^{k} \rVert^{p}. 
\]
By dividing both sides by $\lambda$ and letting $\lambda\downarrow 0$, we get
\begin{equation}\label{eq2:th:con:stqconvexp2}
 p \gamma_{k}\phi( \lVert \ov{x} - x^{k+1} \rVert ) +\sigma_p \lVert \ov{x} - x^{k+1} \rVert^{2}\leq
   \lVert \ov{x} - x^{k} \rVert^{p}
  -   \lVert x^{k+1}-x^{k} \rVert^{p}. 
\end{equation}
In light of \eqref{eq:ineqp}, we get
\[
\Vert \ov{x}-x^{k}\Vert^p\leq \Vert x^{k+1}-x^{k}\Vert^p +p\Vert \ov{x}-x^{k}\Vert^{p-1}\lVert\ov{x}-x^{k+1}\rVert.
\]
Combining with \eqref{eq2:th:con:stqconvexp2}, this implies
\[
 p \gamma_{k}\phi( \lVert \ov{x} - x^{k+1} \rVert ) +\sigma_p \lVert \ov{x} - x^{k+1} \rVert^{2}\leq
   p\Vert \ov{x}-x^{k}\Vert^{p-1}\lVert\ov{x}-x^{k+1}\rVert,
\]
which yields                         
\eqref{eq:th:con:stqconvexp2}.
\\
\ref{th:con:rel:p=2}
From \eqref{eq:eqp=22} and \eqref{eq1:th:con:stqconvex}, for each 
$\lambda\in [0,1]$, we obtain
 \begin{align*}
   \gf (x^{k+1})& + \frac{1}{2 \gamma_{k}} \lVert x^{k+1} - x^{k} \rVert^{2}\\& \leq
 \gf (\lambda \ov{x} + (1-\lambda) x^{k+1}) + \frac{1}{2 \gamma_{k}} \lVert \lambda 
  (\ov{x} - x^{k}) + (1-\lambda)(x^{k+1}-x^{k}) \rVert^{2}, \notag \\
  &\leq \bs\max\{\gf (\ov{x}), \gf (x^{k+1})\} - \lambda (1 - \lambda) \phi( \lVert 
  \ov{x} - x^{k+1} \rVert ) + \frac{\lambda}{2 \gamma_{k}} \lVert \ov{x} - x^{k} \rVert^{2}
  \notag \\
  & ~~~~ +  \frac{(1-\lambda)}{2 \gamma_{k}} \lVert x^{k+1}-x^{k} \rVert^{2} - 
  \frac{\lambda (1-\lambda)}{2 \gamma_{k}} \lVert \ov{x} - x^{k+1} \rVert^{2}. \notag
\end{align*} 
Together with \eqref{eq:eqp=21}, this implies
 \begin{align*}
   \lambda (1 - \lambda) \phi( \lVert 
  \ov{x} - x^{k+1} \rVert )
  &\leq  \frac{\lambda}{2 \gamma_{k}} \lVert \ov{x} - x^{k} \rVert^{2}
 -  \frac{\lambda}{2 \gamma_{k}} \lVert x^{k+1}-x^{k} \rVert^{2} - 
  \frac{\lambda (1-\lambda)}{2 \gamma_{k}} \lVert \ov{x} - x^{k+1} \rVert^{2}
  \\& = \frac{\lambda}{\gamma_{k}} \langle x^{k+1} - x^{k}, \ov{x} - x^{k+1} \rangle +  \frac{\lambda^{2}}{2 \gamma_{k}} \lVert \ov{x} - x^{k+1} \rVert^{2}.
\end{align*} 
Dividing both sides by $\lambda$ and letting $\lambda\downarrow 0$ yield
 \begin{align*}
  \gamma_{k}\phi( \lVert 
  \ov{x} - x^{k+1} \rVert )\leq\langle x^{k+1} - x^{k}, \ov{x} - x^{k+1} \rangle
  \leq - \lVert \ov{x}-x^{k+1}\rVert^{2} + \lVert \ov{x}-x^{k}\rVert \lVert \ov{x}-x^{k+1}\rVert,
\end{align*} 
establishing \eqref{eq:th:con:stqconvexp=2}.
\\
\ref{th:con:rel:p>2}
Invoking Example~\ref{ex:norm:unif} and \eqref{eq1:th:con:stqconvex}, for each 
$\lambda\in [0,1]$ and $p> 2$, we arrive at
 \begin{align*}
   \gf (x^{k+1}) &+ \frac{1}{p \gamma_{k}} \lVert x^{k+1} - x^{k} \rVert^{p} \\&\leq
 \gf (\lambda \ov{x} + (1-\lambda) x^{k+1}) + \frac{1}{p \gamma_{k}} \lVert \lambda 
  (\ov{x} - x^{k}) + (1-\lambda)(x^{k+1}-x^{k}) \rVert^{p}, \notag \\
  &\leq\bs \max\{\gf (\ov{x}), \gf (x^{k+1})\} - \lambda (1 - \lambda) \phi( \lVert 
  \ov{x} - x^{k+1} \rVert ) + \frac{\lambda}{p \gamma_{k}} \lVert \ov{x} - x^{k} \rVert^{p}
  \notag \\
  & ~~~~ +  \frac{(1-\lambda)}{p \gamma_{k}} \lVert x^{k+1}-x^{k} \rVert^{p} - 
  \frac{\lambda (1-\lambda)\widehat{\sigma}_p}{p \gamma_{k}} \lVert \ov{x} - x^{k+1} \rVert^{p}, \notag
\end{align*} 
implying
\[
  \lambda (1 - \lambda) \phi( \lVert \ov{x} - x^{k+1} \rVert ) +\frac{\lambda (1-\lambda)\widehat{\sigma}_p}{p \gamma_{k}} \lVert \ov{x} - x^{k+1} \rVert^{p}\leq
  \frac{\lambda}{p \gamma_{k}} \lVert \ov{x} - x^{k} \rVert^{p}
  -  \frac{\lambda}{p \gamma_{k}} \lVert x^{k+1}-x^{k} \rVert^{p}. 
\]
By dividing both sides by $\lambda$ and letting $\lambda\downarrow 0$, we get
\begin{equation}\label{eq2:th:con:stqconvex}
 \phi( \lVert \ov{x} - x^{k+1} \rVert ) +\frac{\widehat{\sigma}_p}{p \gamma_{k}} \lVert \ov{x} - x^{k+1} \rVert^{p}\leq
  \frac{1}{p \gamma_{k}} \lVert \ov{x} - x^{k} \rVert^{p}
  -  \frac{1}{p \gamma_{k}} \lVert x^{k+1}-x^{k} \rVert^{p}. 
\end{equation}
Combining with \eqref{eq:ineqp}, this ensures 
\begin{align*}
\Vert \ov{x}-x^{k}\Vert^p&=\Vert (x^{k}-x^{k+1}) - (\ov{x}-x^{k+1})\Vert^p
\\&\leq \Vert x^{k+1}-x^{k}\Vert^p - p\Vert \ov{x}-x^{k}\Vert^{p-2}\langle x^{k}-\ov{x}, \ov{x}-x^{k+1}\rangle
\\&= \Vert x^{k+1}-x^{k}\Vert^p + p\Vert \ov{x}-x^{k}\Vert^{p-2}\langle x^{k}-\ov{x}, x^{k+1}-\ov{x}\rangle
\\&\leq \Vert x^{k+1}-x^{k}\Vert^p + p\Vert \ov{x}-x^{k}\Vert^{p-1}\lVert\ov{x}-x^{k+1}\rVert.
\end{align*} 
It follows from this and \eqref{eq2:th:con:stqconvex} that
\[
 \phi( \lVert \ov{x} - x^{k+1} \rVert ) +\frac{\widehat{\sigma}_p}{p \gamma_{k}} \lVert \ov{x} - x^{k+1} \rVert^{p}\leq
  \frac{1}{\gamma_k}\Vert \ov{x}-x^{k}\Vert^{p-1}\lVert\ov{x}-x^{k+1}\Vert,
\]
leading to \eqref{eq:th:con:stqconvex}.
\end{proof}

The following theorem provides a detailed asymptotic convergence rate analysis for HiPPA under various conditions on the uniform quasiconvexity modulus $\phi$, revealing scenarios where the algorithm achieves linear and superlinear convergence. The results highlight the interplay between the geometry of the problem (captured by $\phi$ and its degree $q$) and the structure of the algorithm (defined by $p$).

\begin{theorem}\label{th:convgstr}
Let $\gf: \R^n\to \Rinf$ be a proper, lsc, coercive, and uniformly quasiconvex function with modulus $\phi$ on a closed convex set $C \subseteq \dom{\gf}$.
 If the sequence $\{x^{k}\}_{k\in \Nz}$ is generated by Algorithm~\ref{alg:HiPPA} and 
$\{\ov{x}\}=\argmin{x \in C} \gf(x)$, then the following statements hold:
    \begin{enumerate}[label=(\textbf{\alph*}), font=\normalfont\bfseries, leftmargin=0.7cm]
    \item \label{th:convgstr:a} If for some $\rho_q>0$, we have $\rho_q t^q\leq \phi(t)$ on $[0,1)$ for some $q\in (1,2)$, then for each $p\in [q,2)$, by choosing $\gamma_{\bs\min}>\frac{1}{\rho_q}$ the convergence rate is locally linear, i.e., there exists $\ov{k}_p\in \Nz$ such that 
 \[
 \frac{\lVert\ov{x}-x^{k+1}\rVert }{\Vert \ov{x}-x^{k}\Vert}
\leq
   \left(\frac{1}{\rho_q\gamma_{\bs\min}}\right)^{\frac{1}{p-1}}<1, \qquad \forall k\geq \ov{k}_p.  
\]
     \item \label{th:convgstr:b} If for some $\rho>0$, we have $\rho t^2\leq \phi(t)$ on $[0,+\infty)$     
     and $p=2$, the convergence rate is linear, i.e., 
    \[
\frac{\lVert \ov{x} - x^{k+1} \rVert}{\lVert \ov{x} -x^{k}\rVert}
 \leq  \frac{1}{1+\gamma_{\bs\min}\rho}<1, \qquad \forall k\in \Nz.
\]

 \item \label{th:convgstr:d} If $p>2$, and for some $\rho_p>0$, we have $\rho_p t^p\leq \phi(t)$ on $[0,+\infty)$, then by choosing $\gamma_{\bs\min}>\frac{1}{\rho_p}$, the convergence rate is linear, i.e., 
    \[
 \frac{\lVert \ov{x} - x^{k+1} \rVert}{\Vert \ov{x}-x^{k}\Vert}
 \leq \left(\frac{p}{p\gamma_{\bs\min}\rho_p+\widehat{\sigma}_p}\right)^{\frac{1}{p-1}}<1
,\qquad \forall k\in \Nz, 
\]
where $\widehat{\sigma}_p=\left(\frac{1}{2}\right)^\frac{3p-2}{2}$.

 \item \label{th:convgstr:e} If  
  for some $q\geq 2$, there exists $\rho_q>0$, such that $\rho_q t^q\leq \phi(t)$ on $[0,+\infty)$, and
 $p>q$, then the convergence is $Q$-superlinear of order $\frac{p-1}{q-1}$. In particular, for the constant $c=(\gamma_{\bs\min}\rho_q)^{\frac{-1}{q-1}}$,
    \[
    \lVert x^{k+1} - \ov{x} \rVert\leq c \lVert x^{k} - \ov{x}\rVert^{\frac{p-1}{q-1}}, \qquad \forall k\in \Nz.
    \]
    \end{enumerate}
\end{theorem}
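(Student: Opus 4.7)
The overall strategy is to feed each of the three recursion inequalities established in Lemma~\ref{th:con:rel} with the hypothesised power-function lower bound $\phi(t) \geq \rho_q t^q$ on the appropriate interval, and then isolate $\|\ov{x}-x^{k+1}\|$ (or a suitable power of it) in terms of $\|\ov{x}-x^k\|$ by elementary algebra. Since the lemma already matches the regimes $p\in(1,2)$, $p=2$, and $p>2$, each item of the theorem is handled by substituting into the corresponding estimate. The global convergence $x^k \to \ov{x}$ supplied by Theorem~\ref{prop:propertiesppa} will be used only in part~\ref{th:convgstr:a}, to localise the iterates in the region where the bound on $\phi$ is valid.

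For \ref{th:convgstr:b}, using $\phi(t)/t \geq \rho t$ in \eqref{eq:th:con:stqconvexp=2} gives $(1+\gamma_k\rho)\|\ov{x}-x^{k+1}\| \leq \|\ov{x}-x^k\|$, and replacing $\gamma_k$ by $\gamma_{\bs\min}$ yields the stated contraction factor. For \ref{th:convgstr:d}, using $\phi(t)/t \geq \rho_p t^{p-1}$ in \eqref{eq:th:con:stqconvex} produces
\[
\Bigl(\gamma_k\rho_p + \tfrac{\hat{\sigma}_p}{p}\Bigr)\|\ov{x}-x^{k+1}\|^{p-1} \leq \|\ov{x}-x^k\|^{p-1},
\]
and taking a $(p-1)$-th root gives the claimed constant; the hypothesis $\gamma_{\bs\min} > 1/\rho_p$ is exactly what forces this ratio strictly below $1$. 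For \ref{th:convgstr:e}, since $p>q\geq 2$ forces $p>2$, I again invoke \eqref{eq:th:con:stqconvex}, discard the nonnegative $\hat{\sigma}_p$-term, and substitute $\phi(t)/t \geq \rho_q t^{q-1}$ to obtain $\gamma_k\rho_q\|\ov{x}-x^{k+1}\|^{q-1} \leq \|\ov{x}-x^k\|^{p-1}$; a $(q-1)$-th root then yields the superlinear estimate with $C = (\gamma_{\bs\min}\rho_q)^{-1/(q-1)}$ and exponent $(p-1)/(q-1) > 1$, which is genuinely greater than $1$ because $p>q$ and $q-1>0$. The logarithmic iteration complexities in \ref{th:convgstr:b} and \ref{th:convgstr:d} follow at once by iterating a linear contraction.

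Part~\ref{th:convgstr:a} is the most delicate. Here the power-function bound $\phi(t) \geq \rho_q t^q$ holds only on $[0,1)$, and the recursion \eqref{eq:th:con:stqconvexp2} carries the auxiliary constant $\sigma_p$ coming from local strong convexity of $\tfrac{1}{p}\|\cdot\|^p$ (cf.\ Example~\ref{ex:norm:unif}). By Theorem~\ref{prop:propertiesppa}~\ref{prop:propertiesppa:glob}, $x^k \to \ov{x}$, so I can choose $\ov{k}_p$ large enough that $\|\ov{x}-x^{k+1}\| < 1$ for all $k \geq \ov{k}_p$. Since $q\in(1,2]$ implies $t^{q-1} \geq t$ for $t\in[0,1)$, I then have $\phi(t)/t \geq \rho_q t^{q-1} \geq \rho_q t$; plugging this into \eqref{eq:th:con:stqconvexp2} yields $\tfrac{1}{2}(p\gamma_k\rho_q + \sigma_p)\|\ov{x}-x^{k+1}\| \leq \|\ov{x}-x^k\|$, and the stated ratio follows after replacing $\gamma_k$ by $\gamma_{\bs\min}$.

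The main obstacle is securing the strict inequality $p\gamma_{\bs\min}\rho_q + \sigma_p > 2$ in part~\ref{th:convgstr:a}, which is what makes the contraction factor less than one. This is where the locality of the statement genuinely matters: the constant $\sigma_p = \kappa_p r^{p-2}$ from Example~\ref{ex:norm:unif} depends on the radius $r$ of a ball containing the tail iterates together with $\ov{x}$, and since $p-2<0$ it blows up as $r$ shrinks. Global convergence of $\{x^k\}$ therefore permits me to enlarge $\ov{k}_p$ until the enclosing radius is small enough to make $\sigma_p$ exceed $2 - p\gamma_{\bs\min}\rho_q$, after which the algebra above closes the argument. Beyond this localisation step, the whole theorem is a direct algebraic consequence of Lemma~\ref{th:con:rel}.
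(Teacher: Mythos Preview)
Your proposal is correct and follows essentially the same approach as the paper: each part is obtained by substituting the assumed power bound on $\phi$ into the matching case of Lemma~\ref{th:con:rel} and performing the obvious algebraic rearrangement, with part~\ref{th:convgstr:a} handled by shrinking the localisation radius $r$ (via global convergence) so that $\sigma_p \sim \kappa_p r^{p-2}$ exceeds $2$. The only cosmetic differences are that the paper tracks the exact value $\sigma_p=\tfrac{\kappa_p r^{p-2}}{2}$ and, in part~\ref{th:convgstr:e}, retains the $\hat\sigma_p$-term before bounding rather than discarding it outright.
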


\begin{proof}
\ref{th:convgstr:a} 
Since $x^k\to\ov{x}$,
from Lemma~\ref{th:con:rel}~\ref{th:con:rel:p<2}, there exists $\ov{k}_p\in \Nz$ such that for each $k\geq \ov{k}_p$,
$\Vert \ov{x} - x^{k+1}\Vert<1$ and
\[
\rho_q\gamma_{k}\frac{\lVert \ov{x} - x^{k+1} \rVert^q}{\lVert\ov{x}-x^{k+1}\rVert} +
\frac{\sigma_p}{p}\lVert \ov{x} - x^{k+1} \rVert\leq
   \Vert \ov{x}-x^{k}\Vert^{p-1}, \qquad \forall k\geq \ov{k}_p.
\]
Since $\lVert \ov{x} - x^{k+1} \rVert^{p-1}\leq \lVert \ov{x} - x^{k+1} \rVert^{q-1}$, it holds that
\[
\rho_q\gamma_{k}\lVert \ov{x} - x^{k+1} \rVert^{p-1}\leq
   \Vert \ov{x}-x^{k}\Vert^{p-1}, \qquad \forall k\geq \ov{k}_p.  
\]
This ensures our desired result.
\\
\ref{th:convgstr:b} Invoking Lemma~\ref{th:con:rel}~\ref{th:con:rel:p=2} yields
\[
(1+\gamma_{k}\rho) \lVert x^{k+1} - \ov{x} \rVert
 \leq  \lVert x^{k} - \ov{x}\rVert, \qquad \forall k\in \Nz.
\]
establishing our claim. 
\\
\ref{th:convgstr:d} It follows from Lemma~\ref{th:con:rel}~\ref{th:con:rel:p>2}, for all $k\in \Nz$, that
\[
\left(\gamma_{\bs\min}\rho_p
 +\frac{\widehat{\sigma}_p}{p} \right)\lVert \ov{x} - x^{k+1} \rVert^{p-1}
\leq\gamma_{k}\frac{\phi( \lVert \ov{x} - x^{k+1} \rVert )}{\lVert \ov{x} - x^{k+1} \rVert}
 +\frac{\widehat{\sigma}_p}{p} \lVert \ov{x} - x^{k+1} \rVert^{p-1}\leq
 \Vert \ov{x}-x^{k}\Vert^{p-1},
\]
meaning that
\[
 \frac{\lVert \ov{x} - x^{k+1} \rVert}{\Vert \ov{x}-x^{k}\Vert}
 \leq \left(\frac{p}{p\gamma_{\bs\min}\rho_p+\widehat{\sigma}_p}\right)^{\frac{1}{p-1}}<1
,\qquad \forall k\in \Nz.
\]
\ref{th:convgstr:e}
Invoking Lemma~\ref{th:con:rel}~\ref{th:con:rel:p>2} ensures
\[
\gamma_{\bs\min}\rho_q \frac{\lVert \ov{x} - x^{k+1} \rVert^{q-1}}{\Vert \ov{x}-x^{k}\Vert^{p-1}}
  \leq
1,\qquad \forall k\in \Nz,
 \]
 establishing our claim.
\end{proof}

Theorem~\ref{th:convgstr} establishes several cases detailing the asymptotic convergence rate of the HiPPA sequence based on the characteristics of the uniform quasiconvexity modulus $\phi$ and the proximal order $p$. Table~\ref{tab:sum} provides a concise summary of these results.

\begin{table}[h]
\centering
\scalebox{0.9}{\begin{tabular}{cccc}
\toprule
\textbf{Modulus Condition} & \textbf{Order \( p \)} & \textbf{Convergence Rate} & \textbf{\( \gamma_{\min} \)} \\
\midrule
\( \phi(t) \geq \rho_q t^q \), \( q \in (1, 2] \), \( \rho_q > 0 \), on \( [0,1) \) & \( p \in [q, 2) \) & Locally linear & \( \gamma_{\min} > \frac{1}{\rho_q} \) \\
 \( \phi(t) \geq \rho t^2 \), \( \rho > 0 \), on \( [0, +\infty) \) & \( p = 2 \) & Linear & \( \gamma_{\min} > 0 \) \\
\( \phi(t) \geq \rho_q t^q \), \( \rho_q > 0 \), on \( [0, +\infty) \) & \( p=q > 2 \) & Linear& \( \gamma_{\min} > \frac{1}{\rho_p} \) \\
\( \phi(t) \geq \rho_q t^q \), \( q \geq 2 \), \( \rho_q > 0 \), on \( [0, +\infty) \) & \( p > q \) & Superlinear& \( \gamma_{\min} > 0 \) \\
\bottomrule
\end{tabular}}
\caption{Summary of the convergence rates discussed in Theorem~\ref{th:convgstr} for HiPPA}\label{tab:sum}
\end{table}

\begin{remark}
The inequality in Theorem~\ref{th:convgstr}~\ref{th:convgstr:e} is to be understood as a superlinear
\textit{asymptotic} estimate. In fact, since $\frac{p-1}{q-1}>1$, it implies
\[
\frac{\|x^{k+1}-\ov{x}\|}{\|x^k-\ov{x}\|}\to 0 \quad \text{as } k\to\infty.
\]
The estimate does not require monotonic decay of the error at every
iteration.
If one additionally wishes to guarantee a one-step contraction
$\|x^{k+1}-\ov{x}\|\le \|x^k-\ov{x}\|$, it suffices to assume
\[
\|x^0-\ov{x}\|\le c^{-\frac{q-1}{p-q}},
\]
which provides an explicit neighborhood where the mapping is contractive.
This condition is not required for superlinear convergence and is
eventually satisfied automatically due to global convergence.
\end{remark}

We present a corollary derived from Theorem~\ref{th:convgstr}, applicable to strongly quasiconvex functions and, by extension, to strongly convex functions.
We recall that every strongly quasiconvex function is $2$-supercoercive by 
Remark~\ref{rem:supercoercive}~\ref{rem:supercoercive:a}.

\begin{corollary}\label{cor:convgstr}
 Let $\gf: \R^n\to \Rinf$ be a proper, lsc, and strongly quasiconvex function with modulus $\phi=\rho t^2$ on a closed convex set $C \subseteq \dom{\gf}$.
 If the sequence $\{x^{k}\}_{k\in \Nz}$ is generated by Algorithm~\ref{alg:HiPPA} and 
$\{\ov{x}\}=\argmin{x \in C} \gf(x)$, then the following statements hold:
    \begin{enumerate}[label=(\textbf{\alph*}), font=\normalfont\bfseries, leftmargin=0.7cm]

     \item \label{cor:convgstr:b} If $p=2$, the convergence rate is linear, i.e., 
    \[
\frac{\lVert \ov{x} - x^{k+1} \rVert}{\lVert \ov{x} -x^{k}\rVert}
 \leq  \frac{1}{1+\gamma_{\bs\min}\rho}<1, \qquad \forall k\in \Nz.
\]

 \item \label{cor:convgstr:e} If  $p>2$, then the convergence rate is $Q$-superlinear of order $p-1$. In particular, for the constant
  $c=(\gamma_{\bs\min}\rho_q)^{-1}$,
    \[
    \lVert x^{k+1} - \ov{x} \rVert\leq c \lVert x^{k} - \ov{x}\rVert^{p-1}, \qquad \forall k\in \Nz.
    \]
    \end{enumerate}
\end{corollary}

\subsection{Complexity analysis}
In this subsection, we investigate the iteration and functional complexity of HiPPA. 
Beyond global convergence and asymptotic rate results, complexity estimates provide explicit bounds on the number of iterations required to reach a prescribed accuracy, either in terms of the iterate distance to the unique minimizer or the objective function value gap. Moreover, we derive a non-asymptotic bound on the number of iterations needed to satisfy a practical stopping criterion based on successive iterates. 

In Theorem~\ref{th:nonemprox}~\ref{th:nonemprox:c}, it is shown that $\argmin{x \in C} \gf(x) = \bs{\rm Fix}(\prox{\gf}{\gamma}{p}(C))$. Hence, a criterion for stopping Algorithm \ref{alg:HiPPA} is $x^{k+1} = x^k$. In practice, we replace this condition with 
$\Vert x^{k+1} - x^k \Vert \leq \varepsilon$ for a given tolerance $\varepsilon > 0$. The following theorem reveals the maximum number of iterations required to achieve the desired accuracy.

\begin{theorem}[Iteration complexity under step-size stopping criteria]\label{pro:reqexe}
Let the assumptions of Theorem~\ref{prop:propertiesppa} hold. If the sequence $\{x^k\}_{k \in \Nz}$ is generated by Algorithm~\ref{alg:HiPPA}, then at most  
\[\frac{p\gamma_{\bs\max}(\gf(x^0)-\bs\inf_{x\in C}\gf(x))}{\varepsilon^p},\]
iterations of Algorithm~\ref{alg:HiPPA} are needed to satisfy the stopping criterion $\Vert x^{k+1} - x^k \Vert \leq \varepsilon$ for a given tolerance $\varepsilon > 0$.
\end{theorem}
\begin{proof}
Summing the inequality \eqref{eq1:prop:propertiesppa} over the first $N>0$ iterations yields
\[
\sum_{k=0}^{N-1}\Vert x^{k+1} - x^k\Vert^p\leq p\gamma_{\bs\max}(\gf(x^0)-\mathop{\bs\inf}\limits_{x\in C}\gf(x)),
\]
meaning that
\[
N\bs\min_{0\leq k\leq N-1}\Vert x^{k+1} - x^k\Vert^p\leq p\gamma_{\bs\max}(\gf(x^0)-\mathop{\bs\inf}\limits_{x\in C}\gf(x)).
\]
Rearranging this inequality yields
$\frac{p\gamma_{\bs\max}(\gf(x^0)-\bs\inf_{x\in C}\gf(x))}{N}\leq \varepsilon^p$, i.e., HiPPA terminates within the number
\[
k\leq \frac{p\gamma_{\bs\max}(\gf(x^0)-\bs\inf_{x\in C}\gf(x))}{\varepsilon^p},
\]
which proves the stated iteration bound.
\end{proof}
The bound obtained in Theorem~\ref{pro:reqexe} is non-asymptotic and holds without imposing any smoothness assumptions on $\gf$.

In the following, we explore the complexity HiPPA in terms of iterations and function values.
In the following, the \textit{ceiling function}, denoted by $\lceil a \rceil$, assigns to each real number $a\in\R$ the smallest integer greater than or equal to it.

\begin{theorem}[Iteration complexity in distance to the minimizer]\label{th:complexity}
    Let $\gf: \R^n\to \Rinf$ be a proper, lsc, coercive, and uniformly quasiconvex function with modulus $\phi$ on a closed convex set $C \subseteq \dom{\gf}$.
 If the sequence $\{x^{k}\}_{k\in \Nz}$ is generated by Algorithm~\ref{alg:HiPPA} and 
$\{\ov{x}\}=\argmin{x \in C} \gf(x)$, then the following statements hold:
    \begin{enumerate}[label=(\textbf{\alph*}), font=\normalfont\bfseries, leftmargin=0.7cm]
    \item \label{th:complexity:a} 
    Let for some $\rho_q>0$, we have $\rho_q t^q\leq \phi(t)$ on $[0,1)$ for some $q\in (1,2)$, $p\in [q,2)$, and $\gamma_{\bs\min}>\frac{1}{\rho_q}$. If the sequence starts within the unit ball centered at the minimizer $\ov{x}$ and remains there, then for a given $\varepsilon\in (0,1)$, the number of iterations to reach
$\Vert \ov{x} - x^k\Vert< \varepsilon$, denoted by $\mathcal{N}(\varepsilon)$, satisfies
\begin{equation}
\label{eq:complexity-bound}
\mathcal{N}(\varepsilon)\leq \left\lceil 1+(p-1)\frac{\log(\varepsilon^{-1}) + \log\left(\Vert x^{0} - \ov{x}\Vert\right)}{\log(\rho_q\gamma_{\bs\min})}\right\rceil.
\end{equation}
In particular, the iteration complexity is
$\mathcal{O}(\log(\varepsilon^{-1}))$.

\item \label{th:complexity:b} If for some $\rho>0$, we have $\rho t^2\leq \phi(t)$ on $[0,+\infty)$     
     and $p=2$, then, for a given $\varepsilon>0$,  
     the number of iterations to reach $\Vert \ov{x} - x^k\Vert< \varepsilon$, denoted by $\mathcal{N}(\varepsilon)$, satisfies
\begin{equation}
\label{eq:complexity-bound:b}
\mathcal{N}(\varepsilon)\le \left\lceil 1+\frac{\log(\varepsilon^{-1})+\log(\Vert x^0-\ov{x}\Vert)}{\log(1 + \rho \gamma_{\bs\min})}\right\rceil,
\end{equation}
In particular, the iteration complexity is $\mathcal{O}\big(\log(\varepsilon^{-1})\big)$.

 \item \label{th:complexity:c} If $p>2$, and for some $\rho_p>0$, we have $\rho_p t^p\leq \phi(t)$ on $[0,+\infty)$, then by choosing $\gamma_{\bs\min}>\frac{1}{\rho_p}$, for a given $\varepsilon>0$,  
     the number of iterations to reach $\Vert \ov{x} - x^k\Vert< \varepsilon$, denoted by $\mathcal{N}(\varepsilon)$, satisfies
\begin{equation}
\label{eq:complexity-bound:c}
\mathcal{N}(\varepsilon)\le \left\lceil 1+\frac{\log(\varepsilon^{-1})+\log(\Vert x^0-\ov{x}\Vert)}{\frac{1}{p-1}\left(\log(p\gamma_{\bs\min}\rho_p+\widehat{\sigma}_p)-\log(p)\right)}\right\rceil,
\end{equation}
In particular, the iteration complexity is $\mathcal{O}\big(\log(\varepsilon^{-1})\big)$.

 \item \label{th:complexity:d} Let  
  for some $q\geq 2$, there exists $\rho_q>0$, such that $\rho_q t^q\leq \phi(t)$ on $[0,+\infty)$, 
 $p>q$, and $\|x^0-\ov{x}\|\le c^{-\frac{q-1}{p-q}}$. Then for a given $\varepsilon>0$, the number of iterations to reach
$\Vert \ov{x} - x^k\Vert< \varepsilon$, denoted by $\mathcal{N}(\varepsilon)$, satisfies
\[
\mathcal{N}(\varepsilon)\le \left\lceil 1+
\frac{\log\Big(\log \big(c^{-\frac{1}{\alpha-1}}\varepsilon^{-1}\big)\Big)
-\log\Big(\log(u_0^{-1})\Big)}
{\log(\alpha)}\right\rceil,
\]
where $c:=(\gamma_{\bs\min}\rho_q)^{\frac{-1}{q-1}}$, $\alpha:=\frac{p-1}{q-1}$, and 
$u_0=c^{\frac{1}{\alpha-1}}\|x^{0}-\ov{x}\|$. In particular,
\begin{equation}\label{eq:th:complexity:d:1}
\log\Big(\log \big(c^{-\frac{1}{\alpha-1}}\varepsilon^{-1}\big)\Big)
\geq \log\Big(\log(u_0^{-1})\Big),
\end{equation}
and
the iteration complexity is $\mathcal{O}\big(\log(\log(\varepsilon^{-1}))\big)$.
    \end{enumerate}    
\end{theorem}

\begin{proof}
\ref{th:complexity:a} 
If $\Vert \ov{x} - x^0\Vert< \varepsilon$, the stopping criterion is already satisfied and the algorithm terminates at the initial iterate. Therefore, in the subsequent analysis we assume that
 $\Vert \ov{x} - x^0\Vert\geq \varepsilon$, which ensures that condition \eqref{eq:th:complexity:d:1} holds.
From the proof of Theorem~\ref{th:convgstr}\,\ref{th:convgstr:a}, for all $k\in\mathbb{N}$,
\[
\Vert x^{k+1} - \ov{x}\Vert \leq \mu\Vert x^k-\ov{x}\Vert,
\]
where $\mu= (\rho_q\gamma_{\bs\min})^{-\frac{1}{p-1}}<1$.
By induction, this inequality implies
\begin{equation}\label{eq1:th:complexity:funval}
\Vert x^{k} - \ov{x}\Vert \leq \mu^{k}\Vert x^{0} - \ov{x}\Vert, \qquad \forall\, k\geq 0.
\end{equation}
Let $\mathcal{K}\in \Nz$ be the first natural number that 
$ \mu^{\mathcal{K}}\Vert x^{0} - \ov{x}\Vert< \varepsilon$, which in turn implies $\Vert x^{\mathcal{K}} - \ov{x}\Vert <\varepsilon$.
We have $\mu^{\mathcal{K}-1}\Vert x^{0} - \ov{x}\Vert\geq \varepsilon$. Hence,
\[
\log(\mu)(\mathcal{K}-1)\geq \log(\varepsilon) - \log\left(\Vert x^{0} - \ov{x}\Vert\right).
\]
Hence,
\[
\mathcal{K}\leq 1+\frac{\log(\varepsilon^{-1}) + \log\left(\Vert x^{0} - \ov{x}\Vert\right)}{\log(1/\mu)}.
\]
Since
\begin{equation}\label{eq3:th:complexity:funval}
\log(1/\mu)=\log \left((\rho_q\gamma_{\bs\min})^{\frac{1}{p-1}}\right)
=\frac{1}{p-1}\log(\rho_q\gamma_{\bs\min}),
\end{equation}
the bound~\eqref{eq:complexity-bound} follows. 
\\
\ref{th:complexity:b} From Theorem~\ref{th:convgstr}~\ref{th:convgstr:b},
$\Vert x^{k}-\ov{x}\Vert\leq \mu^k\Vert x^0-\ov{x}\Vert$ with $\mu= \frac{1}{1+\rho \gamma_{\bs\min}}$.
Consequently, similar to the proof of Assertion~\ref{th:complexity:a}, the number of iterations to reach
$\Vert \ov{x} - x^k\Vert< \varepsilon$, satisfies \eqref{eq:complexity-bound:b},
which shows $\lVert x^{k} - \ov{x}\rVert\le\varepsilon$ is guaranteed within  $\mathcal{O}\big(\log(\varepsilon^{-1})\big)$. 
\\
\ref{th:complexity:c}
From Theorem~\ref{th:convgstr}~\ref{th:convgstr:d}, 
$\Vert x^{k}-\ov{x}\Vert\leq \mu^k\Vert x^0-\ov{x}\Vert$, where $\mu= \left(\frac{p}{p\gamma_{\bs\min}\rho_p+\widehat{\sigma}_p}\right)^{\frac{1}{p-1}}$.
Hence, similar to the proof of Assertion~\ref{th:complexity:a}, the number of iterations to reach
$\Vert \ov{x} - x^k\Vert< \varepsilon$, satisfies \eqref{eq:complexity-bound:c},
indicating $\lVert x^{k} - \ov{x}\rVert\le\varepsilon$ is guaranteed within $\mathcal{O}\big(\log(\varepsilon^{-1})\big)$ number of iterations. 
\\
\ref{th:complexity:d} Set $e_k:=\|x^k-\ov{x}\|$ for $k\in\Nz$. By Theorem~\ref{th:convgstr}~\ref{th:convgstr:e}, we have
\begin{equation}\label{eq:super-rec}
e_{k+1}\le c\, e_k^{\alpha},\qquad \forall k\in\Nz,
\end{equation}
where $c:=(\gamma_{\bs\min}\rho_q)^{\frac{-1}{q-1}}$ and $\alpha:=\frac{p-1}{q-1}>1$. 
From the assumption,
\begin{equation}\label{eq:k0-choice}
e_{0}< c^{-\frac{1}{\alpha-1}}.
\end{equation}
Define $u_k:=c^{\frac{1}{\alpha-1}}\,e_k$. Then \eqref{eq:super-rec} implies, for all $k\in\Nz$,
\[
u_{k+1} = c^{\frac{1}{\alpha-1}}e_{k+1} \le
c^{\frac{1}{\alpha-1}}\cdot c\, e_k^{\alpha}
= \big(c^{\frac{1}{\alpha-1}}e_k\big)^{\alpha} = u_k^{\alpha}.
\]
In particular, by \eqref{eq:k0-choice} we have $u_{0}< 1$, and therefore the sequence
$\{u_k\}_{k\in\Nz}$ is nonincreasing. Iterating $u_{k+1}\le u_k^{\alpha}$ yields
\begin{equation}\label{eq:uk-bound}
u_{k}\le u_{0}^{\alpha^{k}},\qquad \forall k\in\Nz.
\end{equation}
Let $\mathcal{K}\in \Nz$ be the first natural number that 
$u_{0}^{\alpha^{\mathcal{K}}} < c^{\frac{1}{\alpha-1}}\varepsilon$, which in turn implies
$e_{\mathcal{K}}< \varepsilon$. We have $u_{0}^{\alpha^{\mathcal{K}-1}} \geq c^{\frac{1}{\alpha-1}}\varepsilon$. Hence,
\[
\alpha^{\mathcal{K}-1} \leq \frac{\log\big(c^{-\frac{1}{\alpha-1}}\varepsilon^{-1}\big)}
{\log(u_{0}^{-1})}.
\]
Taking logarithms and using $\alpha>1$ shows that $e_{\mathcal{K}}< \varepsilon$ holds whenever
\[
\mathcal{K}\ \leq 1+
\frac{\log\Big(\log \big(c^{-\frac{1}{\alpha-1}}\varepsilon^{-1}\big)\Big)
-\log\big(\log(u_{0}^{-1})\big)}
{\log(\alpha)}.
\]
\end{proof}

\begin{remark}[On local complexity and the entry phase]
The initialization assumptions in cases~\ref{th:complexity:a} and~\ref{th:complexity:d} of Theorem~\ref{th:complexity} are imposed only to state clean complexity bounds from the initial index $k=0$. If these assumptions are not imposed, the global convergence result in Theorem~\ref{prop:propertiesppa} still implies that the required local conditions hold after some finite iterations $k_0\geq 0$. In case~\ref{th:complexity:a}, the estimate applies from $x^{k_0}$ once $\|x^k-x^*\|<1$ for all $k\ge k_0$, with an additional entry cost $k_0$. Similarly, in the superlinear case~\ref{th:complexity:d}, the estimate $\mathcal{O}(\log(\log(\varepsilon^{-1})))$ applies after the iterates enter the region $\|x^{k_0}-x^*\|<c^{-1/(\alpha-1)}$, where $c:=(\gamma_{\bs\min}\rho_q)^{-1/(q-1)}$ and $\alpha:=(p-1)/(q-1)$. Since the present analysis does not provide an explicit problem-independent upper bound for $k_0$, this estimate should be understood as a local (asymptotic) complexity bound in the superlinear regime rather than as a directly verifiable global bound from an arbitrary starting point. The same interpretation applies to the function-value complexity estimates in Theorem~\ref{th:complexity2} below.
\end{remark}

\begin{theorem}[Function-value complexity of HiPPA]\label{th:complexity2}
    Let $\gf: \R^n\to \Rinf$ be a proper, lsc, coercive, and uniformly quasiconvex function with modulus $\phi$ on a closed convex set $C \subseteq \dom{\gf}$.
 If the sequence $\{x^{k}\}_{k\in \Nz}$ is generated by Algorithm~\ref{alg:HiPPA} and 
$\{\ov{x}\}=\argmin{x \in C} \gf(x)$, then the following statements hold:
    \begin{enumerate}[label=(\textbf{\alph*}), font=\normalfont\bfseries, leftmargin=0.7cm]
    \item \label{th:complexity:a2} 
     Let for some $\rho_q>0$, we have $\rho_q t^q\leq \phi(t)$ on $[0,1)$ for some $q\in (1,2)$, $p\in [q,2)$, and $\gamma_{\bs\min}>\frac{1}{\rho_q}$. If the sequence starts within the unit ball centered at the minimizer $\ov{x}$ and remains there, then for a given $\varepsilon\in (0,1)$, the number of iterations to reach
$\gf (x^{k}) -\gf (\ov{x})< \varepsilon$, denoted by $\mathcal{N}^{\gf}(\varepsilon)$, satisfies
 \[
 \mathcal{N}^{\gf}(\varepsilon)\leq \left\lceil 2 + \frac{\log(\varepsilon^{-1})+\log\left(\frac{\Vert x^{0} - \ov{x}\Vert^p}{p\gamma_{\bs\min}}\right)}{p\log(1/\mu)}\right\rceil.
 \]
In particular, the local iteration complexity 
is $\mathcal{O}\big(\log(\varepsilon^{-1})\big)$.

\item \label{th:complexity:b2} If for some $\rho>0$, we have $\rho t^2\leq \phi(t)$ on $[0,+\infty)$     
     and $p=2$, then, for a given $\varepsilon>0$,  
the number of iterations to reach
$\gf (x^{k}) -\gf (\ov{x})< \varepsilon$, denoted by $\mathcal{N}^{\gf}(\varepsilon)$, satisfies
\begin{equation}\label{th:complexity:b2:eq1}
    \mathcal{N}^{\gf}(\varepsilon)\leq \left\lceil
    2+ \frac{\log(\varepsilon^{-1})+\log\left(\frac{\Vert x^{0} - \ov{x}\Vert^p}{p\gamma_{\bs\min}}\right)}{p\log(1+\rho\gamma_{\bs\min})}\right\rceil.
\end{equation}
In particular, the iteration complexity in function value is
$k = \mathcal{O}\big(\log(\varepsilon^{-1})\big)$.

 \item \label{th:complexity:c2} If $p>2$, and for some $\rho_p>0$, we have $\rho_p t^p\leq \phi(t)$ on $[0,+\infty)$, then by choosing $\gamma_{\bs\min}>\frac{1}{\rho_p}$, for a given $\varepsilon>0$,  
  the number of iterations to reach
$\gf (x^{k}) -\gf (\ov{x})< \varepsilon$, denoted by $\mathcal{N}^{\gf}(\varepsilon)$, satisfies
\begin{equation}\label{th:complexity:b2:eq2}
    \mathcal{N}^{\gf}(\varepsilon)\leq \left\lceil
    2+\frac{\log(\varepsilon^{-1})+\log\left(\frac{\Vert x^{0} - \ov{x}\Vert^p}{p\gamma_{\bs\min}}\right)}{\frac{1}{p-1}\left(\log(p\gamma_{\bs\min}\rho_p+\widehat{\sigma}_p)-\log(p)\right)}\right\rceil.
\end{equation}
In particular, the iteration complexity in function value is
$k = \mathcal{O}\big(\log(\varepsilon^{-1})\big)$.

 \item \label{th:complexity:d2} 
 Let for some $q\geq 2$, there exists $\rho_q>0$, such that $\rho_q t^q\leq \phi(t)$ on $[0,+\infty)$, 
 $p>q$, and $\|x^0-\ov{x}\|\le c^{-\frac{q-1}{p-q}}$. Then for a given $\varepsilon>0$, 
  the number of iterations to reach
$\gf (x^{k}) -\gf (\ov{x})< \varepsilon$, denoted by $\mathcal{N}^{\gf}(\varepsilon)$, satisfies
\begin{equation}\label{th:complexity:b2:eq3}
\mathcal{N}^{\gf}(\varepsilon) \leq \left\lceil 2+
\frac{\log\Big(\log \big(c^{-\frac{p}{\alpha-1}}\left(p \gamma_{\bs\min}\right)^{-1}\varepsilon^{-1}\big)\Big)
-\log\big(p\log(u_0^{-1})\big)}
{\log(\alpha)}\right\rceil,
\end{equation}
where $c:=(\gamma_{\bs\min}\rho_q)^{\frac{-1}{q-1}}$, $\alpha:=\frac{p-1}{q-1}$, and $u_0=c^{\frac{1}{\alpha-1}}\|x^{0}-\ov{x}\|$.
In particular,
\begin{equation}\label{eq:th:complexity:d2:1}
\log\Big(\log \big(c^{-\frac{p}{\alpha-1}}\left(p \gamma_{\bs\min}\right)^{-1}\varepsilon^{-1}\big)\Big)
\geq\log\big(p\log(u_0^{-1})\big)
\end{equation}
and
the iteration complexity in function value is
$\mathcal{O}\big(\log(\log(\varepsilon^{-1}))\big)$.
    \end{enumerate}

\end{theorem}
\begin{proof}
\ref{th:complexity:a2} 
From the definition, for each $p>1$,
\[
 \gf (x^{k+1}) + \frac{1}{p \gamma_{k}} \lVert x^{k+1} - x^{k} \rVert^{p}\leq \gf (\ov{x}) 
 + \frac{1}{p \gamma_{k}} \lVert \ov{x} - x^{k} \rVert^{p},
\]
which yields
\begin{equation}\label{eq2:th:complexity:funval}
\gf (x^{k+1}) -\gf (\ov{x})\leq  \frac{1}{p \gamma_{k}} \lVert \ov{x} - x^{k} \rVert^{p}
\leq  \frac{1}{p \gamma_{\bs\min}} \lVert \ov{x} - x^{k} \rVert^{p}.
\end{equation}
Together \eqref{eq1:th:complexity:funval} and setting $\mu:= (\rho_q\gamma_{\min})^{-\frac{1}{p-1}}<1$, this implies
\[
\gf (x^{k+1}) -\gf (\ov{x})\leq  \frac{1}{p \gamma_{\bs\min}} \lVert \ov{x} - x^{k} \rVert^{p}\leq  \frac{1}{p \gamma_{\bs\min}} \mu^{pk}\Vert x^{0} - \ov{x}\Vert^p.
\]
Let $\mathcal{K}\in \Nz$ be the first natural number that 
$\frac{1}{p \gamma_{\bs\min}} \mu^{p\mathcal{K}}\Vert x^{0} - \ov{x}\Vert^p<\varepsilon$, which in turn implies
$\gf (x^{\mathcal{K}+1}) -\gf (\ov{x})< \varepsilon$. We have 
$\frac{1}{p \gamma_{\bs\min}} \mu^{p(\mathcal{K}-1)}\Vert x^{0} - \ov{x}\Vert^p\geq\varepsilon$. Hence,
\[
\mathcal{K} -1\leq \frac{\log(\varepsilon^{-1})+\log\left(\frac{\Vert x^{0} - \ov{x}\Vert^p}{p\gamma_{\bs\min}}\right)}{p\log(1/\mu)}.
\]
Using \eqref{eq3:th:complexity:funval},
\[
\mathcal{K} \leq 1+ \frac{p-1}{p}\frac{\log(\varepsilon^{-1})+\log\left(\frac{\Vert x^{0} - \ov{x}\Vert^p}{p\gamma_{\bs\min}}\right)}{\log(\rho_q\gamma_{\bs\min})}.
\]
\ref{th:complexity:b} Similar to the proof of Assertion~\ref{th:complexity:a} and from the proof of Theorem~\ref{th:complexity}~\ref{th:complexity:b}, 
\[
\gf (x^{k+1}) -\gf (\ov{x})\leq  \frac{1}{p \gamma_{\bs\min}} \lVert \ov{x} - x^{k} \rVert^{p}
\leq  \frac{1}{p \gamma_{\bs\min}} \mu^{pk}\Vert x^{0} - \ov{x}\Vert^p,
\]
where $\mu= \frac{1}{1+\rho \gamma_{\bs\min}}$.
Hence,  the number of iterations to reach
$\gf (x^{k}) -\gf (\ov{x})< \varepsilon$, satisfies \eqref{th:complexity:b2:eq1}.
\\
\ref{th:complexity:c2}
Similar to the proof of Assertion~\ref{th:complexity:a} and from the proof of Theorem~\ref{th:complexity}~\ref{th:complexity:c}, 
\[
\gf (x^{k+1}) -\gf (\ov{x})\leq  \frac{1}{p \gamma_{\bs\min}} \lVert \ov{x} - x^{k} \rVert^{p}
\leq  \frac{1}{p \gamma_{\bs\min}} \mu^{pk}\Vert x^{0} - \ov{x}\Vert^p,
\]
where $\mu= \left(\frac{p}{p\gamma_{\bs\min}\rho_p+\widehat{\sigma}_p}\right)^{\frac{1}{p-1}}$.
Hence,  the number of iterations to reach
$\gf (x^{k}) -\gf (\ov{x})< \varepsilon$, satisfies \eqref{th:complexity:b2:eq2}.
\\
\ref{th:complexity:d2}
If $\frac{1}{p \gamma_{\bs\min}}\|x^0-\ov{x}\|^{p}<\varepsilon$, the result is evident. Therefore, in the subsequent analysis we assume that
$\frac{1}{p \gamma_{\bs\min}}\|x^0-\ov{x}\|^{p}\geq\varepsilon$, which ensures that condition \eqref{eq:th:complexity:d2:1} holds.
Let $e_k:=\|x^k-\ov{x}\|$. Similar to the proof of Theorem~\ref{th:complexity}~\ref{th:complexity:d}, by defining
 $u_k:=c^{\frac{1}{\alpha-1}}\,e_k$, where $c:=(\gamma_{\bs\min}\rho_q)^{\frac{-1}{q-1}}$ and $\alpha:=\frac{p-1}{q-1}>1$, 
 \[
u_{k}\le u_{0}^{\alpha^{k}},\qquad \forall k\in\Nz.
 \]
This implies
\[
e_{k}\leq c^{-\frac{1}{\alpha-1}}\left(c^{\frac{1}{\alpha-1}}\,e_{0}\right)^{\alpha^{k}},\qquad \forall k\in\Nz.
\]
Together with \eqref{eq2:th:complexity:funval}, this implies
\[
\gf (x^{k+1}) -\gf (\ov{x})
\leq  \frac{1}{p \gamma_{\bs\min}} e_{k}^{p}\leq \frac{1}{p \gamma_{\bs\min}}c^{-\frac{p}{\alpha-1}}\left(c^{\frac{1}{\alpha-1}}\,e_{0}\right)^{p\alpha^{k}},\qquad \forall k\in\Nz.
\]
Let $\mathcal{K}\in \Nz$ be the first natural number that 
\[
\frac{1}{p \gamma_{\bs\min}}c^{-\frac{p}{\alpha-1}}\left(c^{\frac{1}{\alpha-1}}\,e_{0}\right)^{p\alpha^{\mathcal{K}}}<\varepsilon.
\]
Then, utilizing \eqref{eq:k0-choice},
\[
\mathcal{K}\ \leq 1+
\frac{\log\Big(\log \big(c^{-\frac{p}{\alpha-1}}\left(p \gamma_{\bs\min}\right)^{-1}\varepsilon^{-1}\big)\Big)
-\log\big(p\log(u_{0}^{-1})\big)}
{\log(\alpha)}.
\]
Hence,  the number of iterations to reach
$\gf (x^{k}) -\gf (\ov{x})< \varepsilon$, satisfies \eqref{th:complexity:b2:eq3}.
\end{proof}

\begin{remark}
The bound in Theorem~\ref{pro:reqexe} has a different role from the sharper bounds derived in Theorems~\ref{th:complexity}~and~\ref{th:complexity2}.  It is a non-asymptotic guarantee for the computable stopping criterion $\|x^{k+1}-x^k\|\le \varepsilon$, and follows only from the descent estimate and the boundedness of the proximal parameters.   Theorems~\ref{th:complexity}~and~\ref{th:complexity2}, on the other hand, use the additional assumptions of the form $\gf(t)\ge \rho_q t^q$ to convert the linear and superlinear estimates of Theorem~\ref{th:convgstr} into distance and function-value complexity bounds by inverting the corresponding recursive estimates. Thus, Theorem~\ref{pro:reqexe} should be viewed as a general worst-case step-size stopping guarantee.
\end{remark}


\section{Conclusion}\label{sec:conclusion}

This paper investigated the asymptotic convergence properties of the high-order proximal-point algorithm (HiPPA) with two primary objectives: (i) establishing convergence to a global minimizer, and (ii) achieving faster convergence rates beyond the classical sublinear bounds. To this end, we introduced and analyzed the class of uniformly quasiconvex optimization problems, exploring their key structural properties—including characterization, calculus rules, coercivity, and the exclusion of local minima and saddle points. We demonstrated that the sequence generated by HiPPA converges to a stationary point of the underlying uniformly quasiconvex function, which, by construction, is also its unique global minimizer. To address the second objective, we further assumed that the modulus function of uniform quasiconvexity satisfies the inequality $\phi(t) \geq \rho_q t^q$ over an interval $\mathcal{I}$. Under this condition, we established the following convergence rates:
\begin{enumerate}[label=(\textbf{\alph*}), font=\normalfont\bfseries, leftmargin=0.7cm]
    \item[$\bullet$] For $q \in (1,2)$ and $\mathcal{I} = [0,1)$, HiPPA achieves a local linear rate for $p \in [q,2)$;
    \item[$\bullet$] For $q = 2$ and $\mathcal{I} = [0,\infty)$, HiPPA converges linearly when $p = 2$;
    \item[$\bullet$] For $p = q > 2$ and $\mathcal{I} = [0,\infty)$, HiPPA again achieves linear convergence;
    \item[$\bullet$] For $q \geq 2$ and $\mathcal{I} = [0,\infty)$, HiPPA attains a superlinear rate when $p > q$.
\end{enumerate}
To the best of our knowledge, this work presented the first comprehensive convergence rate analysis of HiPPA. Some of the results obtained are new even for the special cases of strongly or uniformly convex functions, thus providing novel insights into the optimization of generalized convex problems.

\bibliographystyle{siamplain}
\bibliography{references.bib}

@article{Ahookhosh24,
  title={High-order methods beyond the classical complexity bounds: Inexact high-order proximal-point methods},
  author={Ahookhosh, Masoud and Nesterov, Yurii},
  journal={Mathematical Programming},
  volume = {208},
  pages={365--407},
  year={2024},
  publisher={Springer}
}

@article{Ahookhosh23,
author = {Ahookhosh, Masoud and Nesterov, Yurii},
title = {High-order methods beyond the classical complexity bounds: Inexact high-order proximal-point methods with segment search},
journal = {Submitted manuscript},
year = {2023},
}

@article{Ahookhosh21,
author = {Ahookhosh, Masoud and Themelis, Andreas and Patrinos, Panagiotis},
title = {A {B}regman forward-backward linesearch algorithm for nonconvex composite optimization: Superlinear convergence to nonisolated local minima},
journal = {SIAM Journal on Optimization},
volume = {31},
pages = {653--685},
year = {2021}
}

@article{Asi2019Stochastic,
author = {Asi, Hilal and Duchi, John C.},
title = {Stochastic (Approximate) Proximal Point Methods: Convergence, Optimality, and Adaptivity},
journal = {SIAM Journal on Optimization},
volume = {29},
number = {3},
pages = {2257-2290},
year = {2019}
}

@article{aujol2024fista,
  title={{FISTA} is an automatic geometrically optimized algorithm for strongly convex functions},
  author={Aujol, J-F and Dossal, Ch and Rondepierre, Aude},
  journal={Mathematical Programming},
  volume={204},
  number={1},
  pages={449--491},
  year={2024},
  publisher={Springer}
}

@book{Bauschke17,
  title={Convex Analysis and Monotone Operator Theory in Hilbert Spaces},
  edition={2},
  author={Bauschke, H.-H. and Combettes, P.-L.},
  year={2017},
  publisher={Springer Cham}
}

@article{flores2021characterizing,
  title={Characterizing quasiconvexity of the pointwise infimum of a family of arbitrary translations of quasiconvex functions, with applications to sums and quasiconvex optimization},
  author={Flores-Baz{\'a}n, F and Garc{\'\i}a, Yboon and Hadjisavvas, Nicolas},
  journal={Mathematical Programming},
  volume={189},
  number={1},
  pages={315--337},
  year={2021},
  publisher={Springer}
}

@book{boyd2004convex,
  title={Convex optimization},
  author={Boyd, Stephen P and Vandenberghe, Lieven},
  year={2004},
  publisher={Cambridge university press}
}

@article{Bredies2022Degenerate,
author = {Bredies, Kristian and Chenchene, Enis and Lorenz, Dirk A. and Naldi, Emanuele},
title = {Degenerate Preconditioned Proximal Point Algorithms},
journal = {SIAM Journal on Optimization},
volume = {32},
number = {3},
pages = {2376-2401},
year = {2022}
}

@article{Cambini2003coercivity,
  title={Coercivity concepts and recession function in constrained problems},
  author={Cambini, Riccardo and Carosi, Laura},
  journal={International Journal of Mathematical Sciences},
  volume={2},
  pages={83--96},
  year={2003}
}

@book{cambini2008generalized,
  title={Generalized Convexity and Optimization: Theory and Applications},
  author={Cambini, Alberto and Martein, Laura},
  volume={616},
  year={2009},
  publisher={Springer Science \& Business Media}
}

@article{Cartis26second,
author = {Cartis, C. and Zhu, W.},
title = {Second-order methods for quartically-regularised cubic polynomials, with applications to high-order tensor methods},
journal = { Mathematical Programming},
volume = {215},
pages = {669-715},
year = {2026}
}

@article{Cartis25Cubic,
author = {Cartis, C. and Zhu, W.},
title = {Cubic-quartic regularization models for solving polynomial subproblems in third-order tensor methods},
journal = {Mathematical Programming},
year = {2025}
}

@book{clarke2013functional,
  title={Functional Analysis, Calculus of Variations and Optimal Control},
  author={Clarke, Francis},
  year={2013},
  publisher={Springer}
}

@incollection{crouzeix2005continuity,
  author      = "Crouzeix, Jean-Pierre",
  title       = "Continuity and differentiability of quasiconvex functions",
  editor      = "Hadjisavvas, N. and Komlósi, S. and Schaible, S.",
  booktitle   = "Handbook of Generalized Convexity and Generalized Monotonicity",
  publisher   = "Springer",
  year        = 2005,
  pages       = "121--149",
}

@article{davis2022proximal,
  title={Proximal methods avoid active strict saddles of weakly convex functions},
  author={Davis, Damek and Drusvyatskiy, Dmitriy},
  journal={Foundations of Computational Mathematics},
  volume={22},
  number={2},
  pages={561--606},
  year={2022},
  publisher={Springer}
}

@article{Davis2022Escaping,
author = {Davis, Damek and D\'{\i}az, Mateo and Drusvyatskiy, Dmitriy},
title = {Escaping Strict Saddle Points of the {M}oreau Envelope in Nonsmooth Optimization},
journal = {SIAM Journal on Optimization},
volume = {32},
number = {3},
pages = {1958-1983},
year = {2022}
}

@book{dhara2011optimality,
  title={Optimality Conditions in Convex Optimization: A Finite-Dimensional View},
  author={Dhara, Anulekha and Dutta, Joydeep},
  year={2011},
  publisher={CRC Press}
}

@article{GLM-survey,
  title={Strongly quasiconvex functions: What we know (so far)},
  author={Grad, S.-M. and Lara, F. and Marcavillaca, R.T.},
  journal={Journal of Optimization Theory and Applications},
  volume={205:38},
  year={2025}
}

@article{grimmer2025some,
  title={Some primal-dual theory for subgradient methods for strongly convex optimization},
  author={Grimmer, Benjamin and Li, Danlin},
  journal={Mathematical Programming},
  volume = {214},
  pages={759--788},
  year={2025},
  publisher={Springer}
}

@article{Gu2020Tight,
author = {Gu, Guoyong and Yang, Junfeng},
title = {Tight Sublinear Convergence Rate of the Proximal Point Algorithm for Maximal Monotone Inclusion Problems},
journal = {SIAM Journal on Optimization},
volume = {30},
number = {3},
pages = {1905-1921},
year = {2020}
}

@article{Guler9291Convergence,
author = {G\"{u}ler, Osman},
title = {On the Convergence of the Proximal Point Algorithm for Convex Minimization},
journal = {SIAM Journal on Control and Optimization},
volume = {29},
number = {2},
pages = {403-419},
year = {1991}
}

@article{Guler92,
  author = {G\"{u}ler, Osman},
  title = {New Proximal Point Algorithms for Convex Minimization},
  journal = {SIAM Journal on Optimization},
  volume = {2},
  pages = {649--664},
  year = {1992}
}

@book{hiriart1996convex,
  title={Convex Analysis and Minimization Algorithms I: Fundamentals},
  author={Hiriart-Urruty, Jean-Baptiste and Lemar{\'e}chal, Claude},
  volume={305},
  year={1996},
  publisher={Springer science \& business media}
}

@article{iusem2018second,
  title={Second order asymptotic functions and applications to quadratic programming},
  author={Iusem, A and Lara, Felipe},
  journal={Journal of Convex Analysis},
  volume={25},
  pages={271--291},
  year={2018}
}

@article{Iusem2022proximal,
  title={Proximal point algorithms for quasiconvex pseudomonotone equilibrium problems},
  author={Iusem, A and Lara, Felipe},
  journal={Journal of Optimization Theory and Applications},
  volume={193},
  number={1},
  pages={443--461},
  year={2022},
  publisher={Springer}
}

@article{Iusem20204two-step,
  author ={Iusem, A. and  Lara, F. and Marcavillaca, R.T. and Yen, L.H.},
  title = {A Two-Step Proximal Point Algorithm for Nonconvex Equilibrium Problems with Applications to Fractional Programming},
  journal = {Journal of Global Optimization},
  volume = {90},
  pages = {755--779},
  year = {2024}
}

@article{Josz2023Certifying,
author = {Josz, C\'{e}dric and Li, Xiaopeng},
title = {Certifying the Absence of Spurious Local Minima at Infinity},
journal = {SIAM Journal on Optimization},
volume = {33},
number = {3},
pages = {1416-1439},
year = {2023}
}

@article{jovanovivc1996note,
  title={A note on strongly convex and quasiconvex functions},
  author={Jovanovi{\v{c}}, MV},
  journal={Mathematical Notes},
  volume={60},
  pages={584--585},
  year={1996},
  publisher={Springer}
}

@article{Kabgani24itsopt,
  author = {Kabgani, A. and Ahookhosh, M.},
  title = {Its{OPT}: An inexact two-level smoothing framework for nonconvex optimization via high-order Moreau envelope},
  year = {2026},
  journal = {SIAM Journal on Optimization (To appear)}
}

@article{Kabgani25itsdeal,
  author = {Kabgani, A. and Ahookhosh, M.},
  title = {Its{DEAL}: Inexact two-level smoothing descent algorithms for weakly convex optimization},
  year = {2025},
  url = {https://doi.org/10.48550/arXiv.2501.02155},
  journal = {arXiv},
  archiveprefix = {2501.02155}
}

@article{Kabgani25fundamental,
  author = {Kabgani, A. and Ahookhosh, M.},
  title = {On Fundamental Properties of High-Order Forward-Backward Envelope},
  year = {2026},
  volume={210, 14},
  journal={Journal of Optimization Theory and Applications}
}

@article{Kabgani25First,
  author = {Kabgani, A. and Ahookhosh, M.},
  title = {First-order majorization-minimization meets high-order majorant: Boosted inexact high-order forward-backward method},
  year = {2025},
  url = {https://doi.org/10.48550/arXiv.2510.22231},
  journal = {arXiv},
  archiveprefix = {2510.22231}
}

@article{Kabgani2023,
  title={Semistrictly and neatly quasiconvex programming using lower global subdifferentials},
  author={Kabgani, A. and Lara, F.},
  journal={Journal of Global Optimization},
  volume={86},
  pages={845-865},
  year={2023}
}

@article{Kabganidiff,
  author = {Kabgani, A. and Ahookhosh, M.},
  title = {Moreau Envelope and Proximal-Point Methods Under the Lens of High-Order Regularization},
  year = {2025},
  volume={33, 47},
  journal={Set-Valued and Variational Analysis}
}

@article{Kab-Lara-2,
  title={Strong subdifferentials: Theory and 
  applications in nonconvex optimization},
  author={Kabgani, A. and Lara, F.},
  journal={ Journal of Global Optimization},
  volume={84},
  pages={349-368},
  year={2022}
}

@article{KecisThibault15,
title = {Moreau envelopes of $s$-lower regular functions},
journal = {Nonlinear Analysis: Theory, Methods \& Applications},
volume = {127},
pages = {157-181},
year = {2015},
author = {Kecis, I. and Thibault, L.}
}

@article{Lara2022strongly,
  title={On strongly quasiconvex functions: Existence results and proximal point algorithms},
  author={Lara, Felipe},
  journal={Journal of Optimization Theory and Applications},
  volume={192},
  number={3},
  pages={891--911},
  year={2022},
  publisher={Springer}
}

@article{lee2019first,
  title={First-order methods almost always avoid strict saddle points},
  author={Lee, Jason D and Panageas, Ioannis and Piliouras, Georgios and Simchowitz, Max and Jordan, Michael I and Recht, Benjamin},
  journal={Mathematical programming},
  volume={176},
  pages={311--337},
  year={2019},
  publisher={Springer}
}

@article{Luque19849Asymptotic,
author = {Luque, Fernando Javier},
title = {Asymptotic Convergence Analysis of the Proximal Point Algorithm},
journal = {SIAM Journal on Control and Optimization},
volume = {22},
number = {2},
pages = {277-293},
year = {1984}
}

@article{ma2022blessing,
  title={Blessing of depth in linear regression: Deeper models have flatter landscape around the true solution},
  author={Ma, Jianhao and Fattahi, Salar},
  journal={Advances in Neural Information Processing Systems},
  volume={35},
  pages={34334--34346},
  year={2022}
}

@article{martinet1970breve,
  author = {Martinet, Bernard},
  title = {R\'{e}gularisation d'in\'{e}quations variationnelles par approximations successives},
  journal = {Revue Francaise d'informatique et de Recherche operationelle},
  volume = {4},
  pages = {154--158},
  year = {1970}
}

@article{martinet1972determination,
  author = {Martinet, Bernard},
  title = {D\'{e}termination approch\'{e}e d'un point fixe d'une application pseudo-contractante},
  journal = {Cas de l'application prox,”Comptes Rendus de l'Academie des Sciences, Paris},
  volume = {274},
  pages = {163--165},
  year = {1972}
}

@article{McRae2024Benign,
author = {McRae, Andrew D. and Boumal, Nicolas},
title = {Benign Landscapes of Low-Dimensional Relaxations for Orthogonal Synchronization on General Graphs},
journal = {SIAM Journal on Optimization},
volume = {34},
number = {2},
pages = {1427-1454},
year = {2024}
}

@article{Milzarek2024Semismooth,
author = {Milzarek, Andre and Schaipp, Fabian and Ulbrich, Michael},
title = {A Semismooth {N}ewton Stochastic Proximal Point Algorithm with Variance Reduction},
journal = {SIAM Journal on Optimization},
volume = {34},
number = {1},
pages = {1157-1185},
year = {2024}
}

@book{Mordukhovich2018,
  title={Variational Analysis and Applications},
  author={Mordukhovich, Boris Sholimovich},
  volume={},
  year={2018},
  publisher={Springer Cham}
}

@article{Moreau65,
  title={Proximit{\'e} et dualit{\'e} dans un espace {H}ilbertien},
  author={Moreau, Jean-Jacques},
  journal={Bulletin de la Soci{\'e}t{\'e} Math{\'e}matique de France},
  volume={93},
  pages={273-299},
  year={1965},
  publisher={Soci{\'e}t{\'e} Math{\'e}matique de France}
}

@article{nam2024strong,
  title={On strong quasiconvexity of functions in infinite dimensions},
  author={Nam, Nguyen Mau and Sharkansky, Jacob},
  journal={Optimization Letters},
  volume={20},
  pages={815--840},
  year={2025}
}

@book{Nesterov2018,
  title={Lectures on Convex Optimization},
  author={Nesterov, Yurii},
  year={2018},
  edition ={Second},
  publisher={Springer Cham}
}

@article{Nesterov2023a,
  author = {Nesterov, Yurii},
  title = {Inexact Accelerated High-Order Proximal-Point Methods},
  journal = {Mathematical Programming},
  volume = {197},
  pages = {1--26},
  year = {2023}
}

@article{nesterov2021inexact,
  title={Inexact high-order proximal-point methods with auxiliary search procedure},
  author={Nesterov, Yurii},
  journal={SIAM Journal on Optimization},
  volume={31},
  pages={2807--2828},
  year={2021},
  publisher={SIAM}
}

@article{Parikh14,
  author = {Parikh, Neal and Boyd, Stephen},
  title = {Proximal Algorithms},
  journal = {Foundations and Trends\textsuperscript{\textregistered} in Optimization},
  volume = {1},
  pages = {127--239},
  year = {2014}
}

@article{Poliquin96,
  title={Prox-regular functions in variational analysis},
  author={Poliquin, R-A. and Rockafellar, R-T.},
  journal={Transactions of the American Mathematical Society},
  volume={348},
  pages={1805-1838},
  year={1996}
}

@article{polyak1966existence,
  title={Existence theorems and convergence of minimizing sequences in extremum problems with restrictions},
  author={Polyak, Boris Teodorovich},
  journal={Soviet Mathematics - Doklady},
  volume={166},
  pages={72-75},
  year={1966}
}

@article{rockafellar1976monotone,
author = {Rockafellar, R. Tyrrell},
title = {Monotone Operators and the Proximal Point Algorithm},
journal = {SIAM Journal on Control and Optimization},
volume = {14},
pages = {877-898},
year = {1976}
}

@book{Rockafellar09,
  title={Variational Analysis},
  author={Rockafellar, R. Tyrrell and Wets, Roger J-B.},
  year={2009},
  publisher={Springer Berlin, Heidelberg}
}

@article{Stella17,
author = {Stella, L. and Themelis, A. and Patrinos, P. },
title = { Forward–backward quasi-{N}ewton methods for nonsmooth optimization problems},
journal = { Computational Optimization and Applications },
volume = {67},
pages = {443-487},
year = {2017}
}

@incollection{themelis2019acceleration,
  author       = {Themelis, Andreas and Ahookhosh, Masoud and Patrinos, Panagiotis},
  title        = {On the acceleration of forward-backward splitting via an inexact {N}ewton method},
  booktitle    = {Splitting Algorithms, Modern Operator Theory, and Applications},
  editor       = {Heinz H. Bauschke and Regina S. Burachik and D. Russell Luke},
  pages        = {363--412},
  year         = {2019},
  publisher    = {Springer Cham}
}

@article{Themelis18,
  title={Forward-Backward Envelope for the Sum of Two Nonconvex Functions: Further Properties and Nonmonotone Linesearch Algorithms},
  author={Themelis, Andreas and Stella,  Lorenzo and  Patrinos, Panagiotis},
  journal={SIAM Journal on Optimization},
  volume={28},
  pages={2274-2303},
  year={2018}
}

@article{vlatakis2019efficiently,
  title={Efficiently avoiding saddle points with zero order methods: No gradients required},
  author={Vlatakis-Gkaragkounis, Emmanouil-Vasileios and Flokas, Lampros and Piliouras, Georgios},
  journal={Advances in neural information processing systems},
  volume={32},
  year={2019}
}

@article{VIAL1982187,
title = {Strong convexity of sets and functions},
journal = {Journal of Mathematical Economics},
volume = {9},
number = {1},
pages = {187-205},
year = {1982},
author = {Jean-Philippe Vial}
}

@incollection{Vidal_Zhu_Haeffele_2022,
  author      = "Vidal, René and Zhu, Zhihui and Haeffele, Benjamin D.",
  title       = "Optimization Landscape of Neural Networks",
  editor      = "Grohs, Philipp and Kutyniok, GittaEditors",
  booktitle   = "Mathematical Aspects of Deep Learning",
  publisher   = "Cambridge University Press",
  year        = 2022,
  pages       = "200–228",
}

@article{vladimirov1978uniformly,
  title={O ravnomerno 
 kvazivypuklyh funkcionalah [{O}n uniformly quasiconvex functionals]},
  author={Vladimirov, AA and Nesterov, Yu E and Chekanov, Yu N},
  journal={Vestnik Moskov. Univ. Ser. XV Vychisl. Mat. Kibernet},
  volume={4},
  pages={18--27},
  year={1978}
}

@article{WU2003Equivalence,
title = {Equivalence among various derivatives and subdifferentials of the distance function},
journal = {Journal of Mathematical Analysis and Applications},
volume = {282},
pages = {629-647},
year = {2003},
author = {Zili Wu and J.J. Ye}
}

@article{zaffaroni2004every,
  title={Is every radiant function the sum of quasiconvex functions?},
  author={Zaffaroni, Alberto},
  journal={Mathematical Methods of Operations Research},
  volume={59},
  pages={221--233},
  year={2004},
  publisher={Springer}
}

@article{zhu2024global,
  title={Global Convergence of High-Order Regularization Methods with Sums-of-Squares Taylor Models},
  author={Zhu, Wenqi and Cartis, Coralia},
  journal={arXiv preprint arXiv:2404.03035},
  year={2024}
}
\end{document}